\numberwithin{equation}{section}
\begin{document}

\title{Some estimates for $\theta$-type Calder\'on--Zygmund operators and linear commutators on certain weighted amalgam spaces}
\author{Hua Wang \footnote{E-mail address: wanghua@pku.edu.cn.}\\
\footnotesize{College of Mathematics and Econometrics, Hunan University, Changsha 410082, P. R. China}}
\date{}
\maketitle

\begin{abstract}
In this paper, we first introduce some new kinds of weighted amalgam spaces. Then we discuss the strong type and weak type estimates for a class of Calder\'on--Zygmund type operators $T_\theta$ in these new weighted spaces. Furthermore, the strong type estimate and endpoint estimate of linear commutators $[b,T_{\theta}]$ formed by $b$ and $T_{\theta}$ are established. Also we study related problems about two-weight, weak type inequalities for $T_{\theta}$ and $[b,T_{\theta}]$ in the weighted amalgam spaces and give some results.\\
MSC(2010): 42B20; 42B35; 46E30; 47B47\\
Keywords: $\theta$-type Calder\'on--Zygmund operators; commutators; weighted amalgam spaces; Muckenhoupt weights; Orlicz spaces.
\end{abstract}

\section{Introduction}

Calder\'on--Zygmund singular integral operators and their generalizations on the Euclidean space $\mathbb R^n$ have been extensively studied (see \cite{duoand,garcia,stein2,yabuta} for instance). In particular, Yabuta \cite{yabuta} introduced certain $\theta$-type Calder\'on--Zygmund operators to facilitate his study of certain classes of pseudo-differential operators. Following the terminology of Yabuta \cite{yabuta}, we introduce the so-called $\theta$-type Calder\'on--Zygmund operators.

\newtheorem{defn}{Definition}[section]

\begin{defn}
Let $\theta$ be a non-negative, non-decreasing function on $\mathbb R^+=(0,+\infty)$ with
\begin{equation}\label{theta1}
\int_0^1\frac{\theta(t)}{\,t\,}dt<\infty.
\end{equation}
A measurable function $K(\cdot,\cdot)$ on $\mathbb R^n\times\mathbb R^n\backslash\{(x,x):x\in\mathbb R^n\}$ is said to be a $\theta$-type kernel if it satisfies
\begin{align}
&(i)\quad \big|K(x,y)\big|\leq \frac{C}{|x-y|^{n}},\quad \mbox{for any }\, x\neq y;\\
&(ii)\quad \big|K(x,y)-K(z,y)\big|+\big|K(y,x)-K(y,z)\big|\leq \frac{C}{|x-y|^{n}}\cdot\theta\Big(\frac{|x-z|}{|x-y|}\Big), \\
&\qquad \mbox{for }\, |x-z|<|x-y|/2.\notag
\end{align}
\end{defn}

\begin{defn}
Let $T_\theta$ be a linear operator from $\mathscr S(\mathbb R^n)$ into its dual $\mathscr S'(\mathbb R^n)$. We say that $T_\theta$ is a $\theta$-type Calder\'on--Zygmund operator if

$(1)$ $T_\theta$ can be extended to be a bounded linear operator on $L^2(\mathbb R^n);$

$(2)$ There is a $\theta$-type kernel $K(x,y)$ such that
\begin{equation}
T_\theta f(x):=\int_{\mathbb R^n}K(x,y)f(y)\,dy
\end{equation}
for all $f\in C^\infty_0(\mathbb R^n)$ and for all $x\notin supp\,f$, where $C^\infty_0(\mathbb R^n)$ is the space consisting of all infinitely differentiable functions on $\mathbb R^n$ with compact supports.
\end{defn}
Note that the classical Calder\'on--Zygmund operator with standard kernel (see \cite{duoand,garcia}) is a special case of $\theta$-type operator $T_{\theta}$ when $\theta(t)=t^{\delta}$ with $0<\delta\leq1$.
\begin{defn}
Given a locally integrable function $b$ defined on $\mathbb R^n$, and given a $\theta$-type Calder\'on--Zygmund operator $T_{\theta}$, the linear commutator $[b,T_\theta]$ generated by $b$ and $T_{\theta}$ is defined for smooth, compactly supported functions $f$ as
\begin{equation}
\begin{split}
[b,T_\theta]f(x)&:=b(x)\cdot T_\theta f(x)-T_\theta(bf)(x)\\
&=\int_{\mathbb R^n}\big[b(x)-b(y)\big]K(x,y)f(y)\,dy.
\end{split}
\end{equation}
\end{defn}

\newtheorem{theorem}{Theorem}[section]

\newtheorem{corollary}{Corollary}[section]

\newtheorem{lemma}{Lemma}[section]

\newtheorem{rem}{Remark}[section]

We first give the following weighted result of $T_{\theta}$ obtained by Quek and Yang in \cite{quek}.

\begin{theorem}[\cite{quek}]\label{strongweak}
Suppose that $\theta$ is a non-negative, non-decreasing function on $\mathbb R^+=(0,+\infty)$ satisfying condition \eqref{theta1}. Let $1\leq p<\infty$ and $w\in A_p$. Then the $\theta$-type Calder\'on--Zygmund operator $T_{\theta}$ is bounded on $L^p_w(\mathbb R^n)$ for $p>1$, and bounded from $L^1_w(\mathbb R^n)$ into $WL^1_w(\mathbb R^n)$ for $p=1$.
\end{theorem}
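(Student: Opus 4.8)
\emph{Proof proposal.} The plan is to run the classical Calder\'on--Zygmund argument; the only new point is that every step which for a standard kernel uses the geometric decay $\sum_{k\ge1}2^{-k\delta}<\infty$ must be rerun using the elementary bound $\sum_{k\ge1}\theta(2^{-k})\le C\int_0^1\theta(t)\,\frac{dt}{t}<\infty$, which holds because $\theta$ is non-decreasing and satisfies \eqref{theta1}. I would begin with the \emph{unweighted} weak-type $(1,1)$ estimate. Fix $\lambda>0$ and take the Calder\'on--Zygmund decomposition of $f\in L^1(\mathbb R^n)$ at height $\lambda$: $f=g+b$, $b=\sum_jb_j$, with $\operatorname{supp}\,b_j\subseteq Q_j$, $\int b_j=0$, $\|b_j\|_{L^1}\le C\int_{Q_j}|f|$, $\sum_j|Q_j|\le C\lambda^{-1}\|f\|_{L^1}$, $\|g\|_{L^\infty}\le C\lambda$ and $\|g\|_{L^1}\le C\|f\|_{L^1}$. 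The good part is handled by the $L^2$-boundedness from Definition~1.2 and Chebyshev's inequality. For the bad part, put $Q_j^{*}=2\sqrt n\,Q_j$ and $\Omega^{*}=\bigcup_jQ_j^{*}$, so that $|\Omega^{*}|\le C\lambda^{-1}\|f\|_{L^1}$; on $(\Omega^{*})^{c}$ the cancellation $\int b_j=0$ gives $T_\theta b_j(x)=\int_{Q_j}\big(K(x,y)-K(x,c_j)\big)b_j(y)\,dy$ with $c_j$ the centre of $Q_j$, and splitting $(Q_j^{*})^{c}$ into dyadic annuli about $c_j$, kernel condition $(ii)$ and the sum above yield $\int_{(Q_j^{*})^{c}}|T_\theta b_j|\,dx\le C\|b_j\|_{L^1}\sum_{k\ge1}\theta(C2^{-k})\le C\|b_j\|_{L^1}$; summing in $j$ gives $|\{x:|T_\theta f(x)|>\lambda\}|\le C\lambda^{-1}\|f\|_{L^1}$.

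Interpolating (Marcinkiewicz) between this bound and the $L^2$ bound yields the unweighted $L^{s}$ bound for $1<s\le2$. Next I would prove the pointwise inequality $M^{\#}(T_\theta f)(x)\le C\big(M(|f|^{s})(x)\big)^{1/s}$ for each such $s$, where $M^{\#}$ is the Fefferman--Stein sharp maximal function and $M$ the Hardy--Littlewood maximal operator. Given a cube $Q\ni x$ with centre $x_Q$, take $c=T_\theta(f\mathbf 1_{(3Q)^{c}})(x_Q)$ and split $\frac{1}{|Q|}\int_Q|T_\theta f-c|\le\frac{1}{|Q|}\int_Q|T_\theta(f\mathbf 1_{3Q})|+\frac{1}{|Q|}\int_Q|T_\theta(f\mathbf 1_{(3Q)^{c}})-c|$; the first term is at most $C\big(M(|f|^{s})(x)\big)^{1/s}$ by H\"older's inequality and the $L^{s}$ bound, and the second at most $C\sum_{k\ge1}\theta(C3^{-k})\,Mf(x)\le CMf(x)\le C\big(M(|f|^{s})(x)\big)^{1/s}$, using condition $(ii)$ on each annulus $3^{k+1}Q\setminus3^{k}Q$ together with the sum above.

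The weighted $L^p$ bound for $1<p<\infty$ and $w\in A_p$ now follows by the standard two-step argument. By the openness of the $A_p$ condition, choose $s>1$ sufficiently close to $1$ that $w\in A_{p/s}$; then $s\le2$, so the unweighted $L^{s}$ bound applies in the estimate above. By the Fefferman--Stein inequality (valid for $w\in A_\infty$, modulo the a priori finiteness of $\|T_\theta f\|_{L^p_w}$, which is arranged by a routine truncation and approximation argument) and the pointwise bound, $\|T_\theta f\|_{L^p_w}\le C\|M^{\#}(T_\theta f)\|_{L^p_w}\le C\big\|(M(|f|^{s}))^{1/s}\big\|_{L^p_w}$; and since $w\in A_{p/s}$, Muckenhoupt's maximal theorem gives $\big\|(M(|f|^{s}))^{1/s}\big\|_{L^p_w}^{p}=\|M(|f|^{s})\|_{L^{p/s}_w}^{p/s}\le C\||f|^{s}\|_{L^{p/s}_w}^{p/s}=C\|f\|_{L^p_w}^{p}$. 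Finally, for $p=1$ and $w\in A_1$ I would rerun the Calder\'on--Zygmund decomposition of $f$ at height $\lambda$ (still with respect to Lebesgue measure), inserting the weight: the $A_1$ condition (the average of $w$ over any cube is comparable to its essential infimum there) together with the doubling of $w$ and the bound $|Q_j|\le\lambda^{-1}\int_{Q_j}|f|$ give $w(\Omega^{*})\le C\lambda^{-1}\|f\|_{L^1_w}$ and $\|g\|_{L^1_w}\le C\|f\|_{L^1_w}$; the good part uses the $L^2_w$-boundedness just established (as $A_1\subseteq A_2$) and Chebyshev, $w(\{x:|T_\theta g(x)|>\lambda/2\})\le C\lambda^{-2}\|g\|_{L^2_w}^{2}\le C\lambda^{-2}\|g\|_{L^\infty}\|g\|_{L^1_w}\le C\lambda^{-1}\|f\|_{L^1_w}$; and the bad part uses the same annular estimate as above, now weighted via the $A_1$ condition on each dilate $2^{k}Q_j$, giving $\int_{(\Omega^{*})^{c}}|T_\theta b_j|\,w\,dx\le C\big(\inf_{Q_j}w\big)\|b_j\|_{L^1}\sum_{k\ge1}\theta(C2^{-k})\le C\int_{Q_j}|f|\,w\,dx$. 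Summing in $j$ and applying Chebyshev's inequality completes the weighted weak-type $(1,1)$ estimate.

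The one substantive point---the only place this departs from the textbook theory---is verifying that the H\"ormander-type estimate in the weak-type step and the kernel-oscillation estimate in the sharp-maximal bound survive the replacement of $t^{\delta}$ by a general modulus $\theta$; both are controlled entirely by the Dini condition \eqref{theta1} through the comparison $\sum_{k\ge1}\theta(2^{-k})\le C\int_0^1\theta(t)\,\frac{dt}{t}$. Everything else---the Calder\'on--Zygmund decomposition, Marcinkiewicz interpolation, the Fefferman--Stein inequality, the openness of the $A_p$ classes, Muckenhoupt's maximal theorem, and the a priori finiteness needed for the absorption---is entirely standard.
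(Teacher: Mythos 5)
Your argument is correct; note, though, that the paper does not prove Theorem \ref{strongweak} at all---it is imported from Quek and Yang \cite{quek}---so the comparison is with the standard literature proof, which is exactly the route you take: unweighted weak $(1,1)$ via the Calder\'on--Zygmund decomposition with the Dini bound $\sum_{k\ge1}\theta(C2^{-k})\le C\int_0^1\theta(t)\,\frac{dt}{t}$ replacing geometric decay, Marcinkiewicz interpolation, the sharp-maximal pointwise estimate (a variant of \eqref{MJ}, which the paper itself quotes from \cite{liu} in Section 5), Fefferman--Stein together with the openness of $A_p$ and Muckenhoupt's maximal theorem for $1<p<\infty$, and a weighted Calder\'on--Zygmund decomposition for the $A_1$ endpoint. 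The two points you leave to routine verification---the a priori finiteness needed to absorb in the Fefferman--Stein step (the reason the literature often works with $M^{\sharp}_{\delta}$, $0<\delta<1$, applied to $T_\theta f$ for bounded compactly supported $f$) and the density extension of $T_\theta$ to general $f\in L^p_w$---are indeed standard, so I see no genuine gap.
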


Since linear commutator has a greater degree of singularity than the corresponding $\theta$-type Calder\'on--Zygmund operator, we need a slightly stronger condition (\ref{theta2}) given below. The following weighted endpoint estimate for commutator $[b,T_{\theta}]$ of the $\theta$-type Calder\'on--Zygmund operator was established in \cite{zhang2} under a stronger version of condition (\ref{theta2}) assumed on $\theta$, if $b\in BMO(\mathbb R^n)$ (for the unweighted case, see \cite{liu}). Let us now recall the definition of the space of $BMO(\mathbb R^n)$ (see \cite{duoand,john}). $BMO(\mathbb R^n)$ is the Banach function space modulo constants with the norm $\|\cdot\|_*$ defined by
\begin{equation*}
\|b\|_*:=\sup_{B}\frac{1}{|B|}\int_B|b(x)-b_B|\,dx<\infty,
\end{equation*}
where the supremum is taken over all balls $B$ in $\mathbb R^n$ and $b_B$ stands for the mean value of $b$ over $B$, that is,
\begin{equation*}
b_B:=\frac{1}{|B|}\int_B b(y)\,dy.
\end{equation*}

\begin{theorem}[\cite{zhang2}]\label{commutator}
Suppose that $\theta$ is a non-negative, non-decreasing function on $\mathbb R^+=(0,+\infty)$ satisfying \eqref{theta1} and
\begin{equation}\label{theta2}
\int_0^1\frac{\theta(t)\cdot|\log t|}{t}dt<\infty,
\end{equation}
let $w\in A_1$ and $b\in BMO(\mathbb R^n)$. Then for all $\lambda>0$, there is a constant $C>0$ independent of $f$ and $\lambda>0$ such that
\begin{equation*}
w\big(\big\{x\in\mathbb R^n:\big|[b,T_\theta](f)(x)\big|>\lambda\big\}\big)
\leq C\int_{\mathbb R^n}\Phi\left(\frac{|f(x)|}{\lambda}\right)\cdot w(x)\,dx,
\end{equation*}
where $\Phi(t)=t\cdot(1+\log^+t)$ and $\log^+t=\max\big\{\log t,0\big\}$.
\end{theorem}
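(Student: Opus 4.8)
The plan is to follow the classical Calderón--Zygmund decomposition argument, adapted to weighted and $\theta$-type settings, and to exploit the pointwise identity
\[
[b,T_\theta](f)(x)=\big(b(x)-\lambda_0\big)T_\theta f(x)-T_\theta\big((b-\lambda_0)f\big)(x)
\]
for an arbitrary constant $\lambda_0$, which lets us subtract averages of $b$ cube-by-cube. First, by homogeneity we may assume $\lambda=1$. Fix the Young function $\Phi(t)=t(1+\log^+t)$, whose complementary function is of exponential type, and recall that $\int_{\mathbb R^n}\Phi(|f|/\lambda)\,w\,dx<\infty$ may be assumed. Apply the Calderón--Zygmund decomposition to $|f|$ at height $\lambda=1$ \emph{with respect to Lebesgue measure} (since $w\in A_1$, the Whitney cubes behave well against $w$): we obtain a disjoint family of cubes $\{Q_j\}$ with $1<\frac{1}{|Q_j|}\int_{Q_j}|f|\le 2^n$, and write $f=g+h$ where $g$ is the good part (bounded by $c$ a.e.\ and with $\|g\|_{L^1_w}\lesssim\|f\|_{L^1_w}$ after using $A_1$), and $h=\sum_j h_j$, $h_j=(f-f_{Q_j})\chi_{Q_j}$, the bad part supported on $\Omega=\bigcup_j Q_j$ with $w(\Omega)\lesssim\int\Phi(|f|)w\,dx$ via the $A_1$ condition and the $L\log L$ bound on the maximal function.

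Next, split $[b,T_\theta]f=[b,T_\theta]g+[b,T_\theta]h$ and estimate the level sets of each separately at height $1/2$. For the good part, further decompose $[b,T_\theta]g=(b-b_{\text{?}})T_\theta g-T_\theta((b-\cdot)g)$ is not quite the right move globally; instead use the $L^2_w$ estimate. Since $g$ is bounded and in $L^1_w$, it lies in $L^2_w$ with $\|g\|_{L^2_w}^2\lesssim\|g\|_{L^1_w}\lesssim\int\Phi(|f|)w$; then Chebyshev together with the weighted $L^2$ boundedness of $[b,T_\theta]$ for $b\in BMO$ and $w\in A_1\subset A_2$ (a known consequence, e.g.\ via the sharp maximal function estimate $M^\#([b,T_\theta]f)\lesssim\|b\|_*\big(M_{L\log L}(T_\theta f)+M_{L(\log L)^2}f\big)$ combined with Theorem~\ref{strongweak}) controls $w(\{|[b,T_\theta]g|>1/2\})$ by $\|g\|_{L^2_w}^2\lesssim\int\Phi(|f|)w$.

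For the bad part, the standard trick is to work outside the enlarged set $\Omega^*=\bigcup_j Q_j^*$ (double cubes), since $w(\Omega^*)\lesssim w(\Omega)$ by the doubling property of $A_1$ weights; on $\Omega^*$ we simply bound the contribution by $w(\Omega^*)$. On $(\Omega^*)^c$ write, with $c_j$ the center of $Q_j$ and using that each $h_j$ has mean zero,
\[
[b,T_\theta]h_j(x)=\big(b(x)-b_{Q_j}\big)T_\theta h_j(x)-T_\theta\big((b-b_{Q_j})h_j\big)(x),
\]
and the mean-zero property turns $T_\theta h_j(x)$ into $\int_{Q_j}\big(K(x,y)-K(x,c_j)\big)h_j(y)\,dy$, estimated by the smoothness condition $(ii)$: $|T_\theta h_j(x)|\lesssim\int_{Q_j}\frac{1}{|x-c_j|^n}\theta\!\big(\frac{|y-c_j|}{|x-c_j|}\big)|h_j(y)|\,dy$. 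Summing over $j$ and integrating $w$ over $(\Omega^*)^c$, the geometric series in dyadic annuli around each $Q_j$ converges precisely because of $\int_0^1\theta(t)t^{-1}dt<\infty$ (this handles the $T_\theta$ piece times the bounded-on-average factor $|b(x)-b_{Q_j}|$, where an extra $|\log(\cdot)|$ appears from the BMO growth of $b$ on the annuli — this is exactly why the stronger hypothesis \eqref{theta2} with the $|\log t|$ factor is needed). The remaining term $T_\theta((b-b_{Q_j})h_j)$ is handled similarly, noting $\int_{Q_j}(b-b_{Q_j})h_j$ need not vanish, so one uses size condition $(i)$ and the $L\log L$ self-improvement of BMO (the John--Nirenberg inequality, which gives $\frac{1}{|Q_j|}\int_{Q_j}|b-b_{Q_j}|\,|f|\lesssim\|b\|_*\,\|\Phi(|f|)\|$-type local estimates) to absorb everything into $\int_{Q_j}\Phi(|f|)w$. \textbf{The main obstacle} is this last weighted estimate on $(\Omega^*)^c$: one must carefully track the interaction of three effects simultaneously — the $\theta$-modulus of continuity decaying at the rate $\int_0^1\theta(t)|\log t|t^{-1}dt$, the logarithmically-growing $BMO$ factor $|b(x)-b_{Q_j}|$ across dyadic shells, and the $A_1$ weight $w$, for which one invokes $w(2^k Q_j)\lesssim 2^{kn}w(Q_j)$ and $\inf_{Q_j}w\gtrsim\frac{w(Q_j)}{|Q_j|}$ to convert the Lebesgue-measure Whitney estimates into weighted ones; making the double series over $(j,k)$ converge and collapse to $\sum_j\int_{Q_j}\Phi(|f|)w\le\int_{\mathbb R^n}\Phi(|f|)w$ is the delicate computational heart of the argument.
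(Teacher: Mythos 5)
This theorem is not proved in the paper at all: it is quoted from \cite{zhang2} (with the unweighted case in \cite{liu}), so there is no in-paper argument to compare with; I can only measure your sketch against the standard Calder\'on--Zygmund-decomposition proof that those references follow, which is indeed the strategy you chose. Most of your outline matches it: CZ decomposition of $f$ at height $\lambda$ with respect to Lebesgue measure, the good part handled by Chebyshev plus the weighted $L^2$ boundedness of $[b,T_\theta]$ (which one can simply quote from \cite{alvarez} together with Theorem \ref{strongweak}; the sharp-function inequality you invoke is stated slightly incorrectly, the known one being $M^{\sharp}_\delta([b,T_{\theta}]f)\lesssim\|b\|_*(M_{\varepsilon}(T_{\theta}f)+M_{L\log L}f)$, but this is harmless), the bad part split as $(b-b_{Q_j})T_\theta h_j - T_\theta((b-b_{Q_j})h_j)$ outside $\Omega^*$, and the first of these two terms estimated exactly as you say, with the dyadic-shell BMO growth $\sim k\|b\|_*$ against $\theta(2^{-k})$ explaining why \eqref{theta2} is needed.

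The genuine gap is your treatment of the second term $T_\theta\big((b-b_{Q_j})h_j\big)$ on $(\Omega^*)^c$. Since $(b-b_{Q_j})h_j$ has no cancellation, only the size condition $(i)$ is available, and then the shell-by-shell integration you propose diverges: for $x\notin 2Q_j$ one only gets $|T_\theta((b-b_{Q_j})h_j)(x)|\lesssim |x-c_j|^{-n}\int_{Q_j}|b-b_{Q_j}||h_j|\,dy$, and
\begin{equation*}
\int_{(2Q_j)^c}\frac{w(x)}{|x-c_j|^{n}}\,dx\approx\sum_{k\geq1}\frac{w(2^{k}Q_j)}{|2^{k}Q_j|}\cdot c_n=\infty
\end{equation*}
in general (already for $w\equiv1$ each shell contributes a fixed constant), so ``absorbing everything into $\int_{Q_j}\Phi(|f|)w$'' cannot be done this way; no John--Nirenberg self-improvement rescues a divergent geometric-free series. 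The standard repair, which is what \cite{zhang2} and \cite{liu} (following P\'erez) do, is to treat $\sum_j(b-b_{Q_j})h_j$ as a single function and apply the weighted weak-$(1,1)$ bound of $T_\theta$ (Theorem \ref{strongweak} with $p=1$, $w\in A_1$), reducing matters to the estimate $\big\|\sum_j(b-b_{Q_j})h_j\big\|_{L^1_w}\lesssim\int_{\mathbb R^n}\Phi(|f|)w\,dx$; this in turn follows from the weighted generalized H\"older inequality \eqref{Wholder}, the exponential integrability \eqref{Jensen}, the $A_1$ comparison $w(Q_j)/|Q_j|\lesssim\inf_{Q_j}w$, and the CZ normalization $1<\frac{1}{|Q_j|}\int_{Q_j}|f|\leq2^n$, via $w(Q_j)\|f\|_{L\log L(w),Q_j}\lesssim w(Q_j)+\int_{Q_j}\Phi(|f|)w\,dx\lesssim\int_{Q_j}\Phi(|f|)w\,dx$ and summation over the disjoint cubes. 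Without this step (or an equivalent substitute), your argument does not close.
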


We equip the $n$-dimensional Euclidean space $\mathbb R^n$ with the Euclidean norm $|\cdot|$ and the Lebesgue measure $dx$. For any $r>0$ and $y\in\mathbb R^n$, let $B(y,r)=\big\{x\in\mathbb R^n:|x-y|<r\big\}$ denote the open ball centered at $y$ with radius $r$, $B(y,r)^c$ denote its complement and $|B(y,r)|$ be the Lebesgue measure of the ball $B(y,r)$. We also use the notation $\chi_{B(y,r)}$ for the characteristic function of $B(y,r)$. Let $1\leq p,q,\alpha\leq\infty$. We define the amalgam space $(L^p,L^q)^{\alpha}(\mathbb R^n)$ of $L^p(\mathbb R^n)$ and $L^q(\mathbb R^n)$ as the set of all measurable functions $f$ satisfying $f\in L^p_{loc}(\mathbb R^n)$ and $\big\|f\big\|_{(L^p,L^q)^{\alpha}(\mathbb R^n)}<\infty$, where
\begin{equation*}
\begin{split}
\big\|f\big\|_{(L^p,L^q)^{\alpha}(\mathbb R^n)}
:=&\sup_{r>0}\left\{\int_{\mathbb R^n}\Big[\big|B(y,r)\big|^{1/{\alpha}-1/p-1/q}\big\|f\cdot\chi_{B(y,r)}\big\|_{L^p(\mathbb R^n)}\Big]^qdy\right\}^{1/q}\\
=&\sup_{r>0}\Big\|\big|B(y,r)\big|^{1/{\alpha}-1/p-1/q}\big\|f\cdot\chi_{B(y,r)}\big\|_{L^p(\mathbb R^n)}\Big\|_{L^q(\mathbb R^n)},
\end{split}
\end{equation*}
with the usual modification when $p=\infty$ or $q=\infty$. This amalgam space was originally introduced by Fofana in \cite{fofana}. As proved in \cite{fofana} the space $(L^p,L^q)^{\alpha}(\mathbb R^n)$ is nontrivial if and only if $p\leq\alpha\leq q$; thus in the remaining of the paper we will always assume that this condition $p\leq\alpha\leq q$ is fulfilled. Note that
\begin{itemize}
  \item For $1\leq p\leq\alpha\leq q\leq\infty$, one can easily see that $(L^p,L^q)^{\alpha}(\mathbb R^n)\subseteq(L^p,L^q)(\mathbb R^n)$, where $(L^p,L^q)(\mathbb R^n)$ is the Wiener amalgam space defined by (see \cite{F,holland} for more information)
\begin{equation*}
(L^p,L^q)(\mathbb R^n):=\left\{f:\big\|f\big\|_{(L^p,L^q)(\mathbb R^n)}
=\left(\int_{\mathbb R^n}\Big[\big\|f\cdot\chi_{B(y,1)}\big\|_{L^p(\mathbb R^n)}\Big]^qdy\right)^{1/q}<\infty\right\};
\end{equation*}
  \item If $1\leq p<\alpha$ and $q=\infty$, then $(L^p,L^q)^{\alpha}(\mathbb R^n)$ is just the classical Morrey space $\mathcal L^{p,\kappa}(\mathbb R^n)$ defined by (with $\kappa=1-p/{\alpha}$, see \cite{morrey})
\begin{equation*}
\mathcal L^{p,\kappa}(\mathbb R^n):=\left\{f:\big\|f\big\|_{\mathcal L^{p,\kappa}(\mathbb R^n)}
=\sup_{y\in\mathbb R^n,r>0}\left(\frac{1}{|B(y,r)|^\kappa}\int_{B(y,r)}|f(x)|^p\,dx\right)^{1/p}<\infty\right\};
\end{equation*}
  \item If $p=\alpha$ and $q=\infty$, then $(L^p,L^q)^{\alpha}(\mathbb R^n)$ reduces to the usual Lebesgue space $L^{\alpha}(\mathbb R^n)$.
\end{itemize}

In \cite{feuto2} (see also \cite{feuto1,feuto3}), Feuto considered a weighted version of the amalgam space $(L^p,L^q)^{\alpha}(w)$. A weight is any positive measurable function $w$ which is locally integrable on $\mathbb R^n$. Let $1\leq p\leq\alpha\leq q\leq\infty$ and $w$ be a weight on $\mathbb R^n$. We denote by $(L^p,L^q)^{\alpha}(w)$ the weighted amalgam space, the space of all locally integrable functions $f$ satisfying $\big\|f\big\|_{(L^p,L^q)^{\alpha}(w)}<\infty$, where
\begin{equation}\label{A}
\begin{split}
\big\|f\big\|_{(L^p,L^q)^{\alpha}(w)}
:=&\sup_{r>0}\left\{\int_{\mathbb R^n}\Big[w(B(y,r))^{1/{\alpha}-1/p-1/q}\big\|f\cdot\chi_{B(y,r)}\big\|_{L^p_w}\Big]^qdy\right\}^{1/q}\\
=&\sup_{r>0}\Big\|w(B(y,r))^{1/{\alpha}-1/p-1/q}\big\|f\cdot\chi_{B(y,r)}\big\|_{L^p_w}\Big\|_{L^q(\mathbb R^n)},
\end{split}
\end{equation}
with the usual modification when $q=\infty$ and $w(B(y,r))=\int_{B(y,r)}w(x)\,dx$ is the weighted measure of $B(y,r)$. Then for $1\leq p\leq\alpha\leq q\leq\infty$, we know that $(L^p,L^q)^{\alpha}(w)$ becomes a Banach function space with respect to the norm $\|\cdot\|_{(L^p,L^q)^{\alpha}(w)}$. Furthermore, we denote by $(WL^p,L^q)^{\alpha}(w)$ the weighted weak amalgam space of all measurable functions $f$ for which (see \cite{feuto2})
\begin{equation}\label{WA}
\begin{split}
\big\|f\big\|_{(WL^p,L^q)^{\alpha}(w)}
:=&\sup_{r>0}\left\{\int_{\mathbb R^n}\Big[w(B(y,r))^{1/{\alpha}-1/p-1/q}\big\|f\cdot\chi_{B(y,r)}\big\|_{WL^p_w}\Big]^qdy\right\}^{1/q}\\
=&\sup_{r>0}\Big\|w(B(y,r))^{1/{\alpha}-1/p-1/q}\big\|f\cdot\chi_{B(y,r)}\big\|_{WL^p_w}\Big\|_{L^q(\mathbb R^n)}<\infty.
\end{split}
\end{equation}

Note that
\begin{itemize}
  \item If $1\leq p<\alpha$ and $q=\infty$, then $(L^p,L^q)^{\alpha}(w)$ is just the weighted Morrey space $\mathcal L^{p,\kappa}(w)$ defined by (with $\kappa=1-p/{\alpha}$, see \cite{komori})
\begin{equation*}
\begin{split}
&\mathcal L^{p,\kappa}(w)\\
:=&\left\{f :\big\|f\big\|_{\mathcal L^{p,\kappa}(w)}
=\sup_{y\in\mathbb R^n,r>0}\left(\frac{1}{w(B(y,r))^{\kappa}}\int_{B(y,r)}|f(x)|^pw(x)\,dx\right)^{1/p}<\infty\right\},
\end{split}
\end{equation*}
and $(WL^p,L^q)^{\alpha}(w)$ is just the weighted weak Morrey space $W\mathcal L^{p,\kappa}(w)$ defined by (with $\kappa=1-p/{\alpha}$)
\begin{equation*}
\begin{split}
&W\mathcal L^{p,\kappa}(w)\\
:=&\left\{f :\big\|f\big\|_{W\mathcal L^{p,\kappa}(w)}
=\sup_{y\in\mathbb R^n,r>0}\sup_{\lambda>0}\frac{1}{w(B(y,r))^{\kappa/p}}\lambda\cdot\Big[w\big(\big\{x\in B(y,r):|f(x)|>\lambda\big\}\big)\Big]^{1/p}
<\infty\right\};
\end{split}
\end{equation*}
  \item If $p=\alpha$ and $q=\infty$, then $(L^p,L^q)^{\alpha}(w)$ reduces to the weighted Lebesgue space $L^{\alpha}_w(\mathbb R^n)$, and $(WL^p,L^q)^{\alpha}(w)$ reduces to the weighted weak Lebesgue space $WL^{\alpha}_w(\mathbb R^n)$.
\end{itemize}
Recently, many works in classical harmonic analysis have been devoted to norm inequalities involving several integral operators in the setting of weighted amalgam spaces, see \cite{feuto4,feuto1,feuto2,feuto3,wei}. These results obtained are extensions of well-known analogues in the weighted Lebesgue spaces. The main purpose of this paper is twofold. We first define some new kinds of weighted amalgam spaces, and then we are going to prove that $\theta$-type Calder\'on--Zygmund operator and associated linear commutator which are known to be bounded in weighted Lebesgue spaces, are also bounded in these new weighted
spaces under appropriate conditions. In addition, we will study two-weight, weak type norm inequalities for $\theta$-type Calder\'on--Zygmund operator and associated commutator in the context of weighted amalgam spaces.

Throughout this paper $C$ will denote a positive constant whose value may change at each appearance. We also use $A\approx B$ to denote the equivalence of $A$ and $B$; that is, there exist two positive constants $C_1$, $C_2$ independent of $A$ and $B$ such that $C_1 A\leq B\leq C_2 A$.

\section{Statements of the main results}

\subsection{Notations and preliminaries}
A weight $w$ is said to belong to the Muckenhoupt's class $A_p$ for $1<p<\infty$, if there exists a constant $C>0$ such that
\begin{equation*}
\left(\frac1{|B|}\int_B w(x)\,dx\right)^{1/p}\left(\frac1{|B|}\int_B w(x)^{-p'/p}\,dx\right)^{1/{p'}}\leq C
\end{equation*}
for every ball $B\subset\mathbb R^n$, where $p'$ is the dual of $p$ such that $1/p+1/{p'}=1$. The class $A_1$ is defined replacing the above inequality by
\begin{equation*}
\frac1{|B|}\int_B w(x)\,dx\leq C\cdot\underset{x\in B}{\mbox{ess\,inf}}\;w(x)
\end{equation*}
for every ball $B\subset\mathbb R^n$. We also define $A_\infty=\bigcup_{1\leq p<\infty}A_p$. For any given ball $B\subset\mathbb R^n$ and $\lambda>0$, we write $\lambda B$ for the ball with the same center as $B$ and radius is $\lambda$ times that of $B$. It is well known that if $w\in A_p$ with $1\leq p<\infty$(or $w\in A_\infty$), then $w$ satisfies the doubling condition; that is, for any ball $B\subset\mathbb R^n$, there exists an absolute constant $C>0$ such that (see \cite{garcia})
\begin{equation}\label{weights}
w(2B)\leq C\,w(B).
\end{equation}
When $w$ satisfies this doubling condition \eqref{weights}, we denote $w\in\Delta_2$ for brevity. Moreover, if $w\in A_\infty$, then for any ball $B$ and any measurable subset $E$ of a ball $B$, there exists a number $\delta>0$ independent of $E$ and $B$ such that (see \cite{garcia})
\begin{equation}\label{compare}
\frac{w(E)}{w(B)}\le C\left(\frac{|E|}{|B|}\right)^\delta.
\end{equation}

Given a weight $w$ on $\mathbb R^n$, as usual, the weighted Lebesgue space $L^p_w(\mathbb R^n)$ for $1\leq p<\infty$ is defined as the set of all functions $f$ such that
\begin{equation*}
\big\|f\big\|_{L^p_w}:=\bigg(\int_{\mathbb R^n}\big|f(x)\big|^pw(x)\,dx\bigg)^{1/p}<\infty.
\end{equation*}
We also denote by $WL^p_w(\mathbb R^n)$($1\leq p<\infty$) the weighted weak Lebesgue space consisting of all measurable functions $f$ such that
\begin{equation*}
\big\|f\big\|_{WL^p_w}:=
\sup_{\lambda>0}\lambda\cdot\Big[w\big(\big\{x\in\mathbb R^n:|f(x)|>\lambda\big\}\big)\Big]^{1/p}<\infty.
\end{equation*}

We next recall some basic definitions and facts about Orlicz spaces needed for the proof of the main results. For further information on the subject, one can see \cite{rao}. A function $\mathcal A$ is called a Young function if it is continuous, nonnegative, convex and strictly increasing on $[0,+\infty)$ with $\mathcal A(0)=0$ and $\mathcal A(t)\to +\infty$ as $t\to +\infty$. An important example of Young function is $\mathcal A(t)=t^p(1+\log^+t)^p$ with some $1\leq p<\infty$. Given a Young function $\mathcal A$, we define the $\mathcal A$-average of a function $f$ over a ball $B$ by means of the following Luxemburg norm:
\begin{equation*}
\big\|f\big\|_{\mathcal A,B}
:=\inf\left\{\lambda>0:\frac{1}{|B|}\int_B\mathcal A\left(\frac{|f(x)|}{\lambda}\right)dx\leq1\right\}.
\end{equation*}
When $\mathcal A(t)=t^p$, $1\leq p<\infty$, it is easy to see that
\begin{equation*}
\big\|f\big\|_{\mathcal A,B}=\left(\frac{1}{|B|}\int_B\big|f(x)\big|^p\,dx\right)^{1/p};
\end{equation*}
that is, the Luxemburg norm coincides with the normalized $L^p$ norm. Given a Young function $\mathcal A$, we use $\bar{\mathcal A}$ to denote the complementary Young function associated to $\mathcal A$. Then the following generalized H\"older's inequality holds for any given ball $B$:
\begin{equation*}
\frac{1}{|B|}\int_B\big|f(x)\cdot g(x)\big|\,dx\leq 2\big\|f\big\|_{\mathcal A,B}\big\|g\big\|_{\bar{\mathcal A},B}.
\end{equation*}
In particular, when $\mathcal A(t)=t\cdot(1+\log^+t)$, we know that its complementary Young function is $\bar{\mathcal A}(t)\approx\exp(t)-1$. In this situation, we denote
\begin{equation*}
\big\|f\big\|_{L\log L,B}=\big\|f\big\|_{\mathcal A,B}, \qquad
\big\|g\big\|_{\exp L,B}=\big\|g\big\|_{\bar{\mathcal A},B}.
\end{equation*}
So we have
\begin{equation}\label{holder}
\frac{1}{|B|}\int_B\big|f(x)\cdot g(x)\big|\,dx\leq 2\big\|f\big\|_{L\log L,B}\big\|g\big\|_{\exp L,B}.
\end{equation}

\subsection{Weighted amalgam spaces}

Let us begin with the definitions of the weighted amalgam spaces with Lebesgue measure in (\ref{A}) and (\ref{WA}) replaced by weighted measure.
\begin{defn}\label{amalgam}
Let $1\leq p\leq\alpha\leq q\leq\infty$, and let $w,\mu$ be two weights on $\mathbb R^n$. We denote by $(L^p,L^q)^{\alpha}(w;\mu)$ the weighted amalgam space, the space of all locally integrable functions $f$ with finite norm
\begin{equation*}
\begin{split}
\big\|f\big\|_{(L^p,L^q)^{\alpha}(w;\mu)}
:=&\sup_{r>0}\left\{\int_{\mathbb R^n}\Big[w(B(y,r))^{1/{\alpha}-1/p-1/q}\big\|f\cdot\chi_{B(y,r)}\big\|_{L^p_w}\Big]^q\mu(y)\,dy\right\}^{1/q}\\
=&\sup_{r>0}\Big\|w(B(y,r))^{1/{\alpha}-1/p-1/q}\big\|f\cdot\chi_{B(y,r)}\big\|_{L^p_w}\Big\|_{L^q_{\mu}}<\infty,
\end{split}
\end{equation*}
with the usual modification when $q=\infty$. Then we can see that the space $(L^p,L^q)^{\alpha}(w;\mu)$ equipped with the norm $\big\|\cdot\big\|_{(L^p,L^q)^{\alpha}(w;\mu)}$ is a Banach function space.
\end{defn}

\begin{defn}\label{Wamalgam}
Let $1\leq p\leq\alpha\leq q\leq\infty$, and let $w,\mu$ be two weights on $\mathbb R^n$. We denote by $(WL^p,L^q)^{\alpha}(w;\mu)$ the weighted weak amalgam space of all measurable functions $f$ for which
\begin{equation*}
\begin{split}
\big\|f\big\|_{(WL^p,L^q)^{\alpha}(w;\mu)}
:=&\sup_{r>0}\left\{\int_{\mathbb R^n}\Big[w(B(y,r))^{1/{\alpha}-1/p-1/q}\big\|f\cdot\chi_{B(y,r)}\big\|_{WL^p_w}\Big]^q\mu(y)\,dy\right\}^{1/q}\\
=&\sup_{r>0}\Big\|w(B(y,r))^{1/{\alpha}-1/p-1/q}\big\|f\cdot\chi_{B(y,r)}\big\|_{WL^p_w}\Big\|_{L^q_{\mu}}<\infty.
\end{split}
\end{equation*}
\end{defn}

We are going to prove that $\theta$-type Calder\'on--Zygmund operator which is known to be bounded on weighted Lebesgue spaces, is also bounded on these new weighted spaces for Muckenhoupt's weights. Our first two results in this paper can be formulated as follows.

\begin{theorem}\label{mainthm:1}
Let $1<p\leq\alpha<q\leq\infty$ and $w\in A_p$, $\mu\in\Delta_2$. Then the $\theta$-type Calder\'on--Zygmund operator $T_{\theta}$ is bounded on $(L^p,L^q)^{\alpha}(w;\mu)$.
\end{theorem}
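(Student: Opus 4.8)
The plan is to run, for each ball, the standard decomposition of the argument into a local and a global part and then to exploit the two–exponent structure of the amalgam norm. Fix $r>0$ and a ball $B=B(y_0,r)$, and for $f\in(L^p,L^q)^{\alpha}(w;\mu)$ write $f=f_1+f_2$ with $f_1=f\cdot\chi_{2B}$ and $f_2=f\cdot\chi_{(2B)^c}$. By Definition~\ref{amalgam} it suffices to bound
\[
\Big\|\,w(B(y_0,r))^{1/\alpha-1/p-1/q}\big\|T_\theta f\cdot\chi_{B(y_0,r)}\big\|_{L^p_w}\Big\|_{L^q_\mu(dy_0)}
\]
by $C\|f\|_{(L^p,L^q)^{\alpha}(w;\mu)}$ with $C$ independent of $r$, and since $\|T_\theta f\cdot\chi_B\|_{L^p_w}\le\|T_\theta f_1\cdot\chi_B\|_{L^p_w}+\|T_\theta f_2\cdot\chi_B\|_{L^p_w}$ I would treat the two pieces separately.

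For the local piece, Theorem~\ref{strongweak} --- which is exactly where the hypotheses $w\in A_p$, $p>1$ and the integrability condition \eqref{theta1} are used --- gives $\|T_\theta f_1\cdot\chi_B\|_{L^p_w}\le\|T_\theta f_1\|_{L^p_w}\le C\|f\cdot\chi_{2B}\|_{L^p_w}$. Since $w\in A_p$ is doubling, $w(B(y_0,r))$ may be replaced by $w(B(y_0,2r))$ up to a constant (the exponent $1/\alpha-1/p-1/q$ is nonpositive because $p\le\alpha$), and then the dilation substitution $r\mapsto 2r$, which leaves the supremum over $r>0$ unchanged, shows this piece is dominated by $C\|f\|_{(L^p,L^q)^{\alpha}(w;\mu)}$.

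For the global piece I would first establish the pointwise bound: for $x\in B$ and $y\notin 2B$ one has $|x-y|\approx|y-y_0|$, so the size estimate for the $\theta$-type kernel yields
\[
\big|T_\theta f_2(x)\big|\le C\int_{(2B)^c}\frac{|f(y)|}{|y-y_0|^{n}}\,dy\le C\sum_{k=1}^{\infty}\frac{1}{|2^{k+1}B|}\int_{2^{k+1}B}|f(y)|\,dy.
\]
By the generalized H\"older inequality and the $A_p$ condition for $w$, each average is at most $C\,w(2^{k+1}B)^{-1/p}\|f\cdot\chi_{2^{k+1}B}\|_{L^p_w}$; integrating over $x\in B$ then gives
\[
w(B)^{1/\alpha-1/p-1/q}\big\|T_\theta f_2\cdot\chi_B\big\|_{L^p_w}\le C\,w(B)^{1/\alpha-1/q}\sum_{k=1}^{\infty}\frac{\|f\cdot\chi_{2^{k+1}B}\|_{L^p_w}}{w(2^{k+1}B)^{1/p}}.
\]
The decisive step is to insert the $A_\infty$ comparison \eqref{compare}, which gives $w(B)/w(2^{k+1}B)\le C2^{-(k+1)n\delta}$: since $\alpha<q$ the exponent $1/\alpha-1/q$ is strictly positive, so this produces a genuine decay factor $2^{-(k+1)n\delta(1/\alpha-1/q)}$ while turning the remaining weight into $w(2^{k+1}B)^{1/\alpha-1/p-1/q}$, i.e.\ the amalgam weight at scale $2^{k+1}r$. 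Taking the $L^q_\mu(dy_0)$ norm, applying Minkowski's inequality to the sum, bounding each summand by $\|f\|_{(L^p,L^q)^{\alpha}(w;\mu)}$, and summing the resulting geometric series closes the estimate; a final $\sup_{r>0}$ gives the theorem. When $q=\infty$ the argument is identical with the obvious changes, $L^q_\mu(dy_0)$ being an essential supremum over $y_0$.

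I expect the principal obstacle to be precisely this last point in the global piece, namely producing summable decay in $k$ after the annular decomposition: doubling of $w$ alone is insufficient, one genuinely needs the reverse-type estimate \eqref{compare}, and the mechanism collapses when $\alpha=q$, which is the reason for assuming $\alpha<q$. It is also worth observing that only the kernel size estimate and the weighted $L^p$ theory of Theorem~\ref{strongweak} are used in the argument; the smoothness condition on the kernel does not enter at the amalgam level.
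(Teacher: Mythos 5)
Your proposal is correct and follows essentially the same route as the paper's proof: the $f_1+f_2$ splitting on $2B$, the weighted $L^p$ bound of Theorem \ref{strongweak} plus doubling for the local part, and for the global part the kernel size estimate, annular decomposition, H\"older with the $A_p$ condition, and the $A_\infty$ comparison \eqref{compare} with $\alpha<q$ to produce the summable geometric decay before taking the $L^q_\mu$ norm via Minkowski. No gaps to report.
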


\begin{theorem}\label{mainthm:2}
Let $p=1$, $1\leq\alpha<q\leq\infty$ and $w\in A_1$, $\mu\in\Delta_2$. Then the $\theta$-type Calder\'on--Zygmund operator $T_{\theta}$ is bounded from $(L^1,L^q)^{\alpha}(w;\mu)$ into $(WL^1,L^q)^{\alpha}(w;\mu)$.
\end{theorem}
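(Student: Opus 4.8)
The plan is to reduce the weak-type amalgam estimate to the weighted weak-type $(1,1)$ bound for $T_\theta$ from Theorem \ref{strongweak}, applied locally on balls, together with a geometric decomposition of the input into a local part and a tail. Fix $r>0$ and a ball $B=B(y,r)$; write $f=f\cdot\chi_{2B}+f\cdot\chi_{(2B)^c}=:f_1+f_2$. By sublinearity of $T_\theta$ and the elementary inequality for weak-type norms, for each $\lambda>0$ one has
\begin{equation*}
w\big(\big\{x\in B:|T_\theta f(x)|>\lambda\big\}\big)\leq w\big(\big\{x\in B:|T_\theta f_1(x)|>\lambda/2\big\}\big)+w\big(\big\{x\in B:|T_\theta f_2(x)|>\lambda/2\big\}\big).
\end{equation*}

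First I would handle the local term. Since $w\in A_1\subset A_\infty$, Theorem \ref{strongweak} gives $\|T_\theta f_1\|_{WL^1_w}\lesssim \|f_1\|_{L^1_w}=\|f\cdot\chi_{2B}\|_{L^1_w}$, hence $\|T_\theta f\cdot\chi_B\|_{WL^1_w}\lesssim\|f\cdot\chi_{2B}\|_{L^1_w}$. Multiplying by $w(B)^{1/\alpha-1-1/q}$ and using the doubling property $w(2B)\approx w(B)$ (valid since $w\in A_1\subset\Delta_2$), I get
\begin{equation*}
w(B)^{1/\alpha-1-1/q}\big\|T_\theta f\cdot\chi_B\big\|_{WL^1_w}\lesssim w(2B)^{1/\alpha-1-1/q}\big\|f\cdot\chi_{2B}\big\|_{L^1_w},
\end{equation*}
and then taking the $L^q_\mu(dy)$ norm and the supremum over $r$ — here using $\mu\in\Delta_2$ to absorb the dilation $2B$, i.e. that $\|g(2\,\cdot\,)\|_{L^q_\mu}\lesssim\|g\|_{L^q_\mu}$ after the change of variables, which follows from the doubling of $\mu$ — controls the local contribution by $\|f\|_{(L^1,L^q)^\alpha(w;\mu)}$.

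The tail term is the point where the kernel estimate enters and is the main obstacle. For $x\in B$ and $z\in(2B)^c$ we have $|x-z|\approx|z-y|$, so by size condition (i), $|T_\theta f_2(x)|\lesssim\int_{(2B)^c}|z-y|^{-n}|f(z)|\,dz$, which is independent of $x$; splitting $(2B)^c$ into annuli $2^{j+1}B\setminus 2^jB$ and using $A_1$ (to pass from Lebesgue to weighted averages via $w(2^jB)\gtrsim (|2^jB|/|B|)\,\mathrm{ess\,inf}_B w$ type reverse estimates, more precisely $\frac1{|2^jB|}\int_{2^jB}|f|\lesssim \frac{1}{w(2^jB)}\int_{2^jB}|f|w$ up to the $A_1$ constant) yields a pointwise bound of the form
\begin{equation*}
\big|T_\theta f_2(x)\big|\lesssim\sum_{j=1}^\infty\frac{1}{w(2^jB)}\int_{2^{j+1}B}|f(z)|w(z)\,dz.
\end{equation*}
Consequently $w(\{x\in B:|T_\theta f_2(x)|>\lambda/2\})\le w(B)$ when the right side exceeds $\lambda/2$ and is $0$ otherwise, so in all cases
\begin{equation*}
\frac{\lambda}{2}\cdot w\big(\big\{x\in B:|T_\theta f_2(x)|>\lambda/2\big\}\big)^{1}\lesssim w(B)\sum_{j=1}^\infty\frac{1}{w(2^jB)}\int_{2^{j+1}B}|f(z)|w(z)\,dz,
\end{equation*}
i.e. $\|T_\theta f_2\cdot\chi_B\|_{WL^1_w}\lesssim w(B)\sum_j w(2^jB)^{-1}\|f\cdot\chi_{2^{j+1}B}\|_{L^1_w}$. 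Multiplying by $w(B)^{1/\alpha-1-1/q}$ and using the doubling estimate \eqref{compare} in the form $w(B)/w(2^jB)\lesssim 2^{-jn\delta}$ to gain a summable factor $2^{-j\varepsilon}$ (possible precisely because $\alpha<q$, so the exponent $1/\alpha-1/q$ is strictly positive and combines with $\delta$ to give geometric decay), I bound the tail contribution by $\sum_j 2^{-j\varepsilon}\,w(2^{j+1}B)^{1/\alpha-1-1/q}\|f\cdot\chi_{2^{j+1}B}\|_{L^1_w}$.

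Finally I would take the $L^q_\mu(dy)$ norm of the tail bound; by Minkowski's inequality it suffices to estimate each $j$-term, and the change of variables $r\mapsto 2^{j+1}r$ together with $\mu\in\Delta_2$ (so that the $\mu$-doubling constant grows at most polynomially in $2^j$, which is again defeated by the exponential factor $2^{-j\varepsilon}$) gives $\lesssim 2^{-j\varepsilon'}\|f\|_{(L^1,L^q)^\alpha(w;\mu)}$ per term. Summing in $j$ and taking the supremum over $r>0$ completes the argument. The case $q=\infty$ is handled by the same decomposition with the $L^q_\mu$ norm replaced by the essential supremum, the series in $j$ still converging by the same gain. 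The crux throughout is the simultaneous use of three doubling/$A_p$ mechanisms — $w\in A_1$ for the local weak-$(1,1)$ bound and the Lebesgue-to-weighted average passage, \eqref{compare} for the annular decay in $w$, and $\mu\in\Delta_2$ for the outer integral — and verifying that the gain $2^{-j\varepsilon}$ coming from $\alpha<q$ survives all of them.
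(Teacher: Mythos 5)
Your decomposition and estimates are essentially the paper's own proof: split $f=f_1+f_2$ at $2B$, use the weighted weak $(1,1)$ bound of Theorem \ref{strongweak} together with the doubling of $w$ for the local part, and for the tail use the size estimate on annuli, the $A_1$ passage from Lebesgue averages to weighted averages, Chebyshev's inequality, and the decay $\big(w(B)/w(2^{j}B)\big)^{1/\alpha-1/q}\lesssim 2^{-jn\delta(1/\alpha-1/q)}$ coming from \eqref{compare}, summable precisely because $\alpha<q$. The one place your write-up goes astray is the treatment of the $L^q_\mu$-norm: no change of variables and no doubling of $\mu$ are needed there, because the enlarged balls $B(y,2r)$ and $B(y,2^{j+1}r)$ have the \emph{same center} $y$; for each fixed $j$ the quantity $\Big\|w(B(y,2^{j+1}r))^{1/\alpha-1-1/q}\big\|f\cdot\chi_{B(y,2^{j+1}r)}\big\|_{L^1_w}\Big\|_{L^q_\mu}$ is exactly the expression defining $\|f\|_{(L^1,L^q)^{\alpha}(w;\mu)}$ at radius $2^{j+1}r$, hence is bounded by that norm through the supremum over radii. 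Your stated justification, that $\mu\in\Delta_2$ yields $\|g(2\,\cdot\,)\|_{L^q_\mu}\lesssim\|g\|_{L^q_\mu}$ after a change of variables, is not a consequence of doubling; and if one really did have to pay the iterated doubling constant $C_\mu^{\,j}$ per term, it would not in general be beaten by $2^{-j\varepsilon}$, since both factors are exponential in $j$ and $\varepsilon=n\delta(1/\alpha-1/q)$ may be small. Fortunately that step is superfluous: with it replaced by the observation above, your argument is complete and coincides with the paper's.
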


Let $\theta$ be a non-negative, non-decreasing function on $\mathbb R^+=(0,+\infty)$ satisfying conditions $(\ref{theta1})$ and $(\ref{theta2})$, and let $[b,T_{\theta}]$ be the commutator formed by $T_{\theta}$ and BMO function $b$. For the strong type estimate of linear commutator $[b,T_{\theta}]$ on the weighted spaces $(L^p,L^q)^{\alpha}(w;\mu)$ with $1<p\leq\alpha<q$, we will prove

\begin{theorem}\label{mainthm:3}
Let $1<p\leq\alpha<q\leq\infty$ and $w\in A_p$, $\mu\in\Delta_2$. Assume that $\theta$ satisfies $(\ref{theta2})$ and $b\in BMO(\mathbb R^n)$, then the linear commutator $[b,T_{\theta}]$ is bounded on $(L^p,L^q)^{\alpha}(w;\mu)$.
\end{theorem}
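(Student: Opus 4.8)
The plan is to reduce the estimate on the weighted amalgam space to the known weighted $L^p$ estimate for $[b,T_\theta]$ (the strong-type companion of Theorem \ref{commutator}, valid for $w\in A_p$, $1<p<\infty$) combined with a careful local/global decomposition adapted to the amalgam structure. Fix a ball $B=B(y,r)$ and write $f=f_1+f_2$, where $f_1=f\cdot\chi_{2B}$ and $f_2=f\cdot\chi_{(2B)^c}$. For the local part, I would invoke the weighted $L^p_w$-boundedness of $[b,T_\theta]$ to get
\begin{equation*}
\big\|[b,T_\theta]f_1\cdot\chi_B\big\|_{L^p_w}\le C\|b\|_*\big\|f\cdot\chi_{2B}\big\|_{L^p_w},
\end{equation*}
and then the doubling property $w\in A_p\subset\Delta_2$ together with $\mu\in\Delta_2$ lets me pass from the ball $2B$ back to $B$ at the level of the outer $L^q_\mu$ norm, after replacing $r$ by $2r$ in the supremum (this is the standard trick that makes the amalgam norm insensitive to constant dilations of the radius). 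This piece is routine once the weighted strong-type commutator bound is granted.

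The substance is the global part $[b,T_\theta]f_2$ on $B$. For $x\in B$ I would split
\begin{equation*}
\big|[b,T_\theta]f_2(x)\big|\le\big|b(x)-b_{B}\big|\cdot\big|T_\theta f_2(x)\big|+\big|T_\theta\big((b-b_{B})f_2\big)(x)\big|,
\end{equation*}
and estimate $|T_\theta f_2(x)|$ and $|T_\theta((b-b_B)f_2)(x)|$ by the usual annular decomposition $(2B)^c=\bigcup_{k\ge1}(2^{k+1}B\setminus 2^kB)$. Using kernel size estimate $(i)$ and the smoothness estimate $(ii)$, on each annulus one gains a factor controlled by $\theta(2^{-k})$ from the regularity of the kernel (for the difference $K(x,z)-K(x_0,z)$ with $x_0$ the center of $B$), and a factor $|b_{2^{k+1}B}-b_B|\lesssim k\|b\|_*$ from the BMO oscillation and the generalized Hölder inequality \eqref{holder} with the pair $(L\log L,\exp L)$; summing in $k$ the series $\sum_k (k+1)\theta(2^{-k})$ converges precisely because of hypothesis \eqref{theta2}, i.e. $\int_0^1\theta(t)|\log t|/t\,dt<\infty$. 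The outcome is a pointwise bound of the form
\begin{equation*}
\big|[b,T_\theta]f_2(x)\big|\le C\|b\|_*\big(1+|b(x)-b_B|\big)\sum_{k\ge1}(k+1)\theta(2^{-k})\cdot\frac{1}{w(2^{k+1}B)}\int_{2^{k+1}B}|f(z)|\,w(z)^{?}\,dz,
\end{equation*}
where the precise weighted averaging has to be arranged so that Hölder's inequality in the $A_p$ setting (using $w^{-1/(p-1)}\in A_{p'}$) converts the $L^1$-type average into $w(2^{k+1}B)^{-1/p}\|f\cdot\chi_{2^{k+1}B}\|_{L^p_w}$ up to the constant controlled by the $A_p$ characteristic of $w$.

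Next I would take the $L^p_w(B)$ norm in $x$: the factor $\|b(x)-b_B\|_{L\log L,B}$-type term is handled by John–Nirenberg, so $\|(1+|b(\cdot)-b_B|)\chi_B\|_{L^p_w}\lesssim w(B)^{1/p}$, and I am left with
\begin{equation*}
w(B)^{1/\alpha-1/p-1/q}\big\|[b,T_\theta]f_2\cdot\chi_B\big\|_{L^p_w}\le C\|b\|_*\sum_{k\ge1}(k+1)\theta(2^{-k})\,\frac{w(B)^{1/\alpha-1/q}}{w(2^{k+1}B)}\big\|f\cdot\chi_{2^{k+1}B}\big\|_{L^p_w}.
\end{equation*}
Now I would use \eqref{compare} (the $A_\infty$ comparison $w(B)/w(2^{k+1}B)\le C 2^{-k\delta n}$ after the reverse-doubling bound) so that $w(B)^{1/\alpha-1/q}/w(2^{k+1}B)^{1/\alpha-1/q}\le C 2^{-k\delta n(1/\alpha-1/q)}$, which since $\alpha<q$ gives a genuine geometric decay in $k$ that beats the polynomial growth $(k+1)$, while $w(2^{k+1}B)^{1/\alpha-1/p-1/q}\|f\cdot\chi_{2^{k+1}B}\|_{L^p_w}$ is exactly the quantity whose $L^q_\mu$-norm appears in $\|f\|_{(L^p,L^q)^\alpha(w;\mu)}$. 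Finally I would apply Minkowski's integral inequality to move the $L^q_\mu$ norm inside the sum over $k$, use $\mu\in\Delta_2$ to control $\|F(\cdot,2^{k+1}r)\|_{L^q_\mu}$ (a translation/dilation-type argument on the outer integral, exactly as in the proofs of Theorems \ref{mainthm:1} and \ref{mainthm:2}), and take the supremum over $r>0$ to conclude.

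The main obstacle I anticipate is the bookkeeping in the global term: arranging the weighted averages so that the $A_p$ and $A_\infty$ properties of $w$ can be applied cleanly to simultaneously (a) absorb the BMO oscillation factors $|b_{2^{k+1}B}-b_B|$ via \eqref{holder}, (b) produce the correct normalization $w(2^{k+1}B)^{1/\alpha-1/p-1/q}$ to match the amalgam norm, and (c) extract from \eqref{compare} a decay exponent $\delta n(1/\alpha-1/q)>0$ that dominates $\sum_k(k+1)\theta(2^{-k})$ — this is where the strict inequality $\alpha<q$ and condition \eqref{theta2} are both essential, and where the $q=\infty$ endpoint must be treated by the obvious modification (replacing the $L^q_\mu$ norm by an essential supremum). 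The local term, by contrast, is an immediate consequence of the weighted $L^p$ theory for $[b,T_\theta]$ plus the doubling of $w$ and $\mu$.
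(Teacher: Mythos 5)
Your overall architecture (the splitting $f=f_1+f_2$, the weighted $L^p_w$-boundedness of $[b,T_\theta]$ for the local part, annular decomposition plus BMO estimates for the global part, then the $A_\infty$ comparison and Minkowski's inequality in $L^q_\mu$) is the same as the paper's. However, one step in your global estimate would fail as stated: you claim that on each annulus the kernel regularity $(ii)$ lets you gain a factor $\theta(2^{-k})$ in a pointwise bound for $\big|T_\theta f_2(x)\big|$ (and for $\big|T_\theta((b-b_B)f_2)(x)\big|$) by comparing $K(x,z)$ with $K(x_0,z)$. That comparison only controls the oscillation $T_\theta f_2(x)-T_\theta f_2(x_0)$; the leftover constant $T_\theta f_2(x_0)$ carries no $\theta$-gain and is precisely of size $\sum_{k}|2^{k+1}B|^{-1}\int_{2^{k+1}B}|f|$, so the asserted pointwise bound with $\theta(2^{-k})$ inside the sum is not available (it would force $T_\theta f_2$ to be small on all of $B$ whenever $\theta$ is small, which is false, e.g., for nonnegative kernels satisfying $(i)$). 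Since the quantity to be bounded here is $\big\|[b,T_\theta]f_2\cdot\chi_B\big\|_{L^p_w}$ itself and not an oscillation, smoothness cannot be exploited in this way. The repair is simply to drop that factor: the size estimate $(i)$ alone yields the paper's estimate \eqref{pointwise1}, and, as you yourself observe at the end, the resulting series
\begin{equation*}
\sum_{k\geq1}(k+1)\cdot\frac{w(B(y,r))^{1/\alpha-1/q}}{w(B(y,2^{k+1}r))^{1/\alpha-1/q}}
\leq C\sum_{k\geq1}(k+1)\cdot\left(\frac{1}{2^{(k+1)n}}\right)^{\delta(1/\alpha-1/q)}<\infty
\end{equation*}
converges by \eqref{compare} and $\alpha<q$ with no help from $\theta$; this is exactly \eqref{psi3}. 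In particular, \eqref{theta2} is not what makes the global sum converge, so attributing the convergence to \eqref{theta2} misidentifies the mechanism (in the paper the global term uses only the size condition, and the hypotheses on $\theta$ enter through the weighted $L^p$ theory for the commutator).

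A second, more minor point: for the annular term containing $\big|b(z)-b_{2^{k+1}B}\big|$ you invoke the unweighted $(L\log L,\exp L)$ H\"older inequality \eqref{holder}; this leaves you with $\|f\|_{L\log L,2^{k+1}B}$, and to convert that into $w(2^{k+1}B)^{-1/p}\big\|f\cdot\chi_{2^{k+1}B}\big\|_{L^p_w}$ you would still need an additional argument (say $\|f\|_{L\log L,Q}\lesssim\|f\|_{L^s,Q}$ for some $1<s<p$ together with the openness property $w\in A_{p/s}$), which you do not supply. The paper's route is more economical: apply H\"older with exponents $p,p'$ directly, place $|b-b_{2^{k+1}B}|$ in $L^{p'}(\nu)$ with $\nu=w^{-p'/p}\in A_{p'}$, and use Lemma \ref{BMO}$(ii)$ for $\nu$; the factor $(k+1)\|b\|_*$ then comes only from the averages $|b_{2^{k+1}B}-b_B|$ via Lemma \ref{BMO}$(i)$. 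With these two corrections your argument coincides with the paper's proof.
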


To obtain endpoint estimate for the linear commutator $[b,T_{\theta}]$, we first need to define the weighted $\mathcal A$-average of a function $f$ over a ball $B$ by means of the weighted Luxemburg norm; that is, given a Young function $\mathcal A$ and $w\in A_\infty$, we define (see \cite{rao,zhang})
\begin{equation*}
\big\|f\big\|_{\mathcal A(w),B}:=\inf\left\{\sigma>0:\frac{1}{w(B)}
\int_B\mathcal A\left(\frac{|f(x)|}{\sigma}\right)\cdot w(x)\,dx\leq1\right\}.
\end{equation*}
When $\mathcal A(t)=t$, this norm is denoted by $\|\cdot\|_{L(w),B}$, when $\mathcal A(t)=t\cdot(1+\log^+t)$, this norm is also denoted by $\|\cdot\|_{L\log L(w),B}$. The complementary Young function of $t\cdot(1+\log^+t)$ is $\exp(t)-1$ with mean Luxemburg norm denoted by $\|\cdot\|_{\exp L(w),B}$. For $w\in A_\infty$ and for every ball $B$ in $\mathbb R^n$, we can also show the weighted version of \eqref{holder}. Namely, the following generalized H\"older's inequality in the weighted setting
\begin{equation}\label{Wholder}
\frac{1}{w(B)}\int_B|f(x)\cdot g(x)|w(x)\,dx\leq C\big\|f\big\|_{L\log L(w),B}\big\|g\big\|_{\exp L(w),B}
\end{equation}
is valid (see \cite{zhang} for instance). Furthermore, we now introduce new weighted spaces of $L\log L$ type as follows.

\begin{defn}
Let $p=1$, $1\leq\alpha\leq q\leq\infty$, and let $w,\mu$ be two weights on $\mathbb R^n$. We denote by $(L\log L,L^q)^{\alpha}(w;\mu)$ the weighted amalgam space of $L\log L$ type, the space of all locally integrable functions $f$ defined on $\mathbb R^n$ with finite norm
$\big\|f\big\|_{(L\log L,L^q)^{\alpha}(w;\mu)}$.
\begin{equation*}
(L\log L,L^q)^{\alpha}(w;\mu):=\left\{f:\big\|f\big\|_{(L\log L,L^q)^{\alpha}(w;\mu)}<\infty\right\},
\end{equation*}
where
\begin{equation*}
\begin{split}
\big\|f\big\|_{(L\log L,L^q)^{\alpha}(w;\mu)}
:=&\sup_{r>0}\left\{\int_{\mathbb R^n}\Big[w(B(y,r))^{1/{\alpha}-1/q}\big\|f\big\|_{L\log L(w),B(y,r)}\Big]^q\mu(y)\,dy\right\}^{1/q}\\
=&\sup_{r>0}\Big\|w(B(y,r))^{1/{\alpha}-1/q}\big\|f\big\|_{L\log L(w),B(y,r)}\Big\|_{L^q_{\mu}}.
\end{split}
\end{equation*}
\end{defn}

Observe that $t\leq t\cdot(1+\log^+t)$ for all $t>0$. Then for any ball $B(y,r)\subset\mathbb R^n$ and $w\in A_\infty$, we have $\big\|f\big\|_{L(w),B(y,r)}\leq \big\|f\big\|_{L\log L(w),B(y,r)}$ by definition, i.e., the inequality
\begin{equation}\label{main esti1}
\big\|f\big\|_{L(w),B(y,r)}=\frac{1}{w(B(y,r))}\int_{B(y,r)}|f(x)|\cdot w(x)\,dx\leq\big\|f\big\|_{L\log L(w),B(y,r)}
\end{equation}
holds for any ball $B(y,r)\subset\mathbb R^n$. Hence, for $1\leq\alpha\leq q\leq\infty$, we can further see the following inclusion:
\begin{equation*}
(L\log L,L^q)^{\alpha}(w;\mu)\subset (L^1,L^q)^{\alpha}(w;\mu).
\end{equation*}

For the endpoint case, we will prove the following weak type $L\log L$ estimate of the linear commutator $[b,T_{\theta}]$ in our weighted amalgam spaces.

\begin{theorem}\label{mainthm:4}
Let $p=1$, $1\leq\alpha<q\leq\infty$ and $w\in A_1$, $\mu\in\Delta_2$. Assume that $\theta$ satisfies $\eqref{theta2}$ and $b\in BMO(\mathbb R^n)$, then for any given $\lambda>0$ and any ball $B(y,r)\subset\mathbb R^n$ with $y\in\mathbb R^n$, $r>0$, there exists a constant $C>0$ independent of $f$, $B(y,r)$ and $\lambda>0$ such that
\begin{equation*}
\begin{split}
&\Big\|w(B(y,r))^{1/{\alpha}-1-1/q}\cdot w\big(\big\{x\in B(y,r):\big|[b,T_{\theta}](f)(x)\big|>\lambda\big\}\big)\Big\|_{L^q_{\mu}}\\
&\leq C\cdot\bigg\|\Phi\left(\frac{|f|}{\,\lambda\,}\right)\bigg\|_{(L\log L,L^q)^{\alpha}(w;\mu)},
\end{split}
\end{equation*}
where $\Phi(t)=t\cdot(1+\log^+t)$ and the norm $\|\cdot\|_{L^q_{\mu}}$ is taken with respect to the variable $y$, i.e.,
\begin{equation*}
\begin{split}
&\Big\|w(B(y,r))^{1/{\alpha}-1-1/q}\cdot w\big(\big\{x\in B(y,r):\big|[b,T_{\theta}](f)(x)\big|>\lambda\big\}\big)\Big\|_{L^q_{\mu}}\\
=&\left\{\int_{\mathbb R^n}\bigg[w(B(y,r))^{1/{\alpha}-1-1/q}\cdot w\big(\big\{x\in B(y,r):\big|[b,T_{\theta}](f)(x)\big|>\lambda\big\}\big)\bigg]^q\mu(y)\,dy\right\}^{1/q}.
\end{split}
\end{equation*}
\end{theorem}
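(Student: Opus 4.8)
The plan is to fix $\lambda>0$, a centre $y\in\mathbb R^n$ and a radius $r>0$, write $B=B(y,r)$ and $B_k=B(y,2^kr)$, estimate $w(\{x\in B:|[b,T_\theta]f(x)|>\lambda\})$ for this single ball, and only at the very end take the $L^q_\mu$-norm in $y$. Decompose $f=f_1+f_2$ with $f_1=f\chi_{2B}$, $f_2=f\chi_{(2B)^c}$, and split the level:
\[
w(\{x\in B:|[b,T_\theta]f(x)|>\lambda\})\le w(\{x\in B:|[b,T_\theta]f_1(x)|>\lambda/2\})+w(\{x\in B:|[b,T_\theta]f_2(x)|>\lambda/2\})=:\mathrm I(y)+\mathrm{II}(y).
\]
For $\mathrm I(y)$ I would drop the restriction $x\in B$ and apply the global weighted endpoint estimate of Theorem \ref{commutator} (available since $\theta$ satisfies \eqref{theta2}, $w\in A_1$ and $b\in BMO$), giving $\mathrm I(y)\le C\int_{2B}\Phi(|f|/\lambda)\,w\,dx$ after absorbing the factor $2$ through $\Phi(2t)\le C\Phi(t)$. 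By \eqref{main esti1} this is $\le C\,w(2B)\|\Phi(|f|/\lambda)\|_{L\log L(w),2B}$, and since $w\in A_1\subset\Delta_2$ one has $w(B)^{1/\alpha-1-1/q}w(2B)\approx w(2B)^{1/\alpha-1/q}$; taking $L^q_\mu$-norms in $y$ and reading $2r$ as one admissible radius in the definition of $\|\cdot\|_{(L\log L,L^q)^\alpha(w;\mu)}$ settles this term.

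For $\mathrm{II}(y)$ I would use the commutator identity $[b,T_\theta]f_2(x)=(b(x)-b_{2B})T_\theta f_2(x)-T_\theta((b-b_{2B})f_2)(x)$, split $\mathrm{II}(y)\le\mathrm{II}_1(y)+\mathrm{II}_2(y)$ at level $\lambda/4$ accordingly, and in each use Chebyshev's inequality against $w\,dx$ on $B$. It then suffices to control, pointwise for $x\in B$, the quantities $T_\theta f_2(x)$ and $T_\theta((b-b_{2B})f_2)(x)$. Using only the size bound $(i)$ of the kernel, the decomposition $(2B)^c=\bigcup_{k\ge1}(B_{k+1}\setminus B_k)$, and $|x-z|\approx 2^kr$ for $x\in B$, $z\in B_{k+1}\setminus B_k$, one gets
\[
|T_\theta f_2(x)|\le C\sum_{k\ge1}\frac1{|B_{k+1}|}\int_{B_{k+1}}|f|\,dz,\qquad |T_\theta((b-b_{2B})f_2)(x)|\le C\sum_{k\ge1}\frac1{|B_{k+1}|}\int_{B_{k+1}}|b-b_{2B}|\,|f|\,dz.
\]
Because $w\in A_1$, every unweighted average $|B_{k+1}|^{-1}\int_{B_{k+1}}(\cdots)\,dz$ is $\le C\,w(B_{k+1})^{-1}\int_{B_{k+1}}(\cdots)\,w\,dz$. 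For the first sum, $|f|\le\lambda\Phi(|f|/\lambda)$ and \eqref{main esti1} give the bound $C\lambda\sum_{k\ge1}\|\Phi(|f|/\lambda)\|_{L\log L(w),B_{k+1}}$; for the second, writing $|b-b_{2B}|\le|b-b_{B_{k+1}}|+|b_{B_{k+1}}-b_{2B}|$ with $|b_{B_{k+1}}-b_{2B}|\le C(k+1)\|b\|_*$, then applying the weighted H\"older inequality \eqref{Wholder}, the weighted John--Nirenberg estimate $\|b-b_{B_{k+1}}\|_{\exp L(w),B_{k+1}}\le C\|b\|_*$, and homogeneity and monotonicity of the Luxemburg norm (since $|f|/\lambda\le\Phi(|f|/\lambda)$), one reaches $C\lambda\|b\|_*\sum_{k\ge1}(k+1)\|\Phi(|f|/\lambda)\|_{L\log L(w),B_{k+1}}$. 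The piece $\mathrm{II}_1(y)$ carries the extra factor $\int_B|b-b_{2B}|\,w\,dx\le C\|b\|_*w(B)$, once more by weighted John--Nirenberg. After multiplying by $w(B)^{1/\alpha-1-1/q}$ and accounting for the factor $w(B)/\lambda$ from Chebyshev, both $\mathrm{II}_1(y)$ and $\mathrm{II}_2(y)$ are dominated by $C\|b\|_*\sum_{k\ge1}(k+1)\,w(B)^{1/\alpha-1/q}\,\|\Phi(|f|/\lambda)\|_{L\log L(w),B_{k+1}}$.

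The decisive step — and the one place where the strict inequality $\alpha<q$ is essential — is summing this last series after taking $L^q_\mu$-norms in $y$. Here I would invoke the $A_\infty$ comparison \eqref{compare}: there is $\delta>0$ with $w(B(y,r))\le C\,2^{-(k+1)n\delta}w(B(y,2^{k+1}r))$, so that
\[
w(B(y,r))^{1/\alpha-1/q}\|\Phi(|f|/\lambda)\|_{L\log L(w),B(y,2^{k+1}r)}\le C\,2^{-(k+1)n\delta(1/\alpha-1/q)}\,w(B(y,2^{k+1}r))^{1/\alpha-1/q}\|\Phi(|f|/\lambda)\|_{L\log L(w),B(y,2^{k+1}r)}.
\]
Applying Minkowski's inequality in $L^q_\mu$ to the sum over $k$ and bounding the $L^q_\mu$-norm of each term by $\|\Phi(|f|/\lambda)\|_{(L\log L,L^q)^\alpha(w;\mu)}$ (taking $2^{k+1}r$ as the radius in the defining supremum) reduces everything to the numerical series $\sum_{k\ge1}(k+1)2^{-(k+1)n\delta(1/\alpha-1/q)}$, which converges precisely because $1/\alpha-1/q>0$ when $\alpha<q$. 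Adding the contributions of $\mathrm I$, $\mathrm{II}_1$ and $\mathrm{II}_2$ yields the assertion. I expect this convergence mechanism — together with the bookkeeping that keeps every localized quantity expressed through the weighted Luxemburg norm $\|\cdot\|_{L\log L(w),B_{k+1}}$ rather than through a plain weighted $L^1$-average (so that no stray additive constant survives on the right-hand side) — to be the main obstacle; the remaining ingredients, namely Theorem \ref{commutator}, the weighted H\"older inequality \eqref{Wholder}, weighted John--Nirenberg, and the $A_1$/$\Delta_2$ properties of $w$, are standard.
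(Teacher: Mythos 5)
Your proposal is correct and follows essentially the same route as the paper's own proof: the same splitting $f=f_1+f_2$ with Theorem \ref{commutator} plus doubling and \eqref{main esti1} for the local part, the same pointwise kernel estimates and the decomposition into the three pieces handled via Lemma \ref{BMO}, the weighted H\"older inequality \eqref{Wholder}, weighted John--Nirenberg, the $A_1$ condition and $t\leq\Phi(t)$, and finally the $A_\infty$ comparison \eqref{compare} with $\alpha<q$ to sum the series after Minkowski's inequality in $L^q_\mu$. The only cosmetic difference is that you subtract $b_{2B}$ where the paper subtracts $b_{B(y,r)}$, which changes nothing of substance.
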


\begin{rem}
From the above definitions and Theorem $\ref{mainthm:4}$, we can roughly say that the linear commutator $[b,T_{\theta}]$ is bounded from $(L\log L,L^q)^{\alpha}(w;\mu)$ into $(WL^1,L^q)^{\alpha}(w;\mu)$ whenever $1\leq\alpha<q\leq\infty$, $w\in A_1$ and $\mu\in\Delta_2$.
\end{rem}

\section{Proofs of Theorems \ref{mainthm:1} and \ref{mainthm:2}}

\begin{proof}[Proof of Theorem $\ref{mainthm:1}$]
Let $1<p\leq\alpha<q\leq\infty$ and $f\in(L^p,L^q)^{\alpha}(w;\mu)$ with $w\in A_p$ and $\mu\in\Delta_2$. We fix $y\in\mathbb R^n$ and $r>0$, and set $B=B(y,r)$ for the ball centered at $y$ and of radius $r$, $2B=B(y,2r)$. We represent $f$ as
\begin{equation*}
f=f\cdot\chi_{2B}+f\cdot\chi_{(2B)^c}:=f_1+f_2;
\end{equation*}
by the linearity of the $\theta$-type Calder\'on--Zygmund operator $T_{\theta}$, we write
\begin{align}\label{I}
&w(B(y,r))^{1/{\alpha}-1/p-1/q}\big\|T_\theta(f)\cdot\chi_{B(y,r)}\big\|_{L^p_w}\notag\\
&=w(B(y,r))^{1/{\alpha}-1/p-1/q}\bigg(\int_{B(y,r)}\big|T_\theta(f)(x)\big|^pw(x)\,dx\bigg)^{1/p}\notag\\
&\leq w(B(y,r))^{1/{\alpha}-1/p-1/q}\bigg(\int_{B(y,r)}\big|T_\theta(f_1)(x)\big|^pw(x)\,dx\bigg)^{1/p}\notag\\
&+w(B(y,r))^{1/{\alpha}-1/p-1/q}\bigg(\int_{B(y,r)}\big|T_\theta(f_2)(x)\big|^pw(x)\,dx\bigg)^{1/p}\notag\\
&:=I_1(y,r)+I_2(y,r).
\end{align}
Below we will give the estimates of $I_1(y,r)$ and $I_2(y,r)$, respectively. By the weighted $L^p$ boundedness of $T_{\theta}$ (see Theorem \ref{strongweak}), we have
\begin{align}\label{I1}
I_1(y,r)&\leq w(B(y,r))^{1/{\alpha}-1/p-1/q}\big\|T_\theta(f_1)\big\|_{L^p_w}\notag\\
&\leq C\cdot w(B(y,r))^{1/{\alpha}-1/p-1/q}
\bigg(\int_{B(y,2r)}|f(x)|^p w(x)\,dx\bigg)^{1/p}\notag\\
&=C\cdot w(B(y,2r))^{1/{\alpha}-1/p-1/q}\big\|f\cdot\chi_{B(y,2r)}\big\|_{L^p_w}\notag\\
&\times \frac{w(B(y,r))^{1/{\alpha}-1/p-1/q}}{w(B(y,2r))^{1/{\alpha}-1/p-1/q}}.
\end{align}
Moreover, since $1/{\alpha}-1/p-1/q<0$ and $w\in A_p$ with $1<p<\infty$, then by the inequality (\ref{weights}), we obtain
\begin{equation}\label{doubling1}
\frac{w(B(y,r))^{1/{\alpha}-1/p-1/q}}{w(B(y,2r))^{1/{\alpha}-1/p-1/q}}\leq C.
\end{equation}
Substituting the above inequality \eqref{doubling1} into \eqref{I1}, we thus obtain
\begin{equation}\label{I1yr}
I_1(y,r)\leq C\cdot w(B(y,2r))^{1/{\alpha}-1/p-1/q}\big\|f\cdot\chi_{B(y,2r)}\big\|_{L^p_w}.
\end{equation}
As for the term $I_2(y,r)$, it is clear that when $x\in B(y,r)$ and $z\in(B(y,2r))^c$, we get $|x-z|\approx|y-z|$. We then decompose $\mathbb R^n$ into a geometrically increasing sequence of concentric balls, and deduce the following pointwise estimate:
\begin{align}\label{pointwise1}
\big|T_\theta(f_2)(x)\big|&\leq C\int_{\mathbb R^n}\frac{|f_2(z)|}{|x-z|^n}dz
\leq C\int_{B(y,2r)^c}\frac{|f(z)|}{|y-z|^n}dz\notag\\
&=C\sum_{j=1}^\infty\int_{B(y,2^{j+1}r)\backslash B(y,2^jr)}\frac{|f(z)|}{|y-z|^n}dz\notag\\
&\leq C\sum_{j=1}^\infty\frac{1}{|B(y,2^{j+1}r)|}\int_{B(y,2^{j+1}r)}|f(z)|\,dz.
\end{align}
From this estimate \eqref{pointwise1}, it follows that
\begin{equation*}
I_2(y,r)\leq C\cdot w(B(y,r))^{1/{\alpha}-1/q}
\sum_{j=1}^\infty\frac{1}{|B(y,2^{j+1}r)|}\int_{B(y,2^{j+1}r)}|f(z)|\,dz.
\end{equation*}
By using H\"older's inequality and $A_p$ condition on $w$, we get
\begin{equation*}
\begin{split}
&\frac{1}{|B(y,2^{j+1}r)|}\int_{B(y,2^{j+1}r)}|f(z)|\,dz\\
&\leq\frac{1}{|B(y,2^{j+1}r)|}\bigg(\int_{B(y,2^{j+1}r)}|f(z)|^pw(z)\,dz\bigg)^{1/p}
\left(\int_{B(y,2^{j+1}r)}w(z)^{-{p'}/p}\,dz\right)^{1/{p'}}\\
&\leq C\bigg(\int_{B(y,2^{j+1}r)}|f(z)|^pw(z)\,dz\bigg)^{1/p}\cdot w\big(B(y,2^{j+1}r)\big)^{-1/p}.
\end{split}
\end{equation*}
Hence,
\begin{equation}\label{I2yr}
\begin{split}
I_2(y,r)&\leq C\cdot w(B(y,r))^{1/{\alpha}-1/q}\\
&\times\sum_{j=1}^\infty\bigg(\int_{B(y,2^{j+1}r)}|f(z)|^pw(z)\,dz\bigg)^{1/p}\cdot w\big(B(y,2^{j+1}r)\big)^{-1/p}\\
&=C\sum_{j=1}^\infty w\big(B(y,2^{j+1}r)\big)^{1/{\alpha}-1/p-1/q}\big\|f\cdot\chi_{B(y,2^{j+1}r)}\big\|_{L^p_w}\\
&\times\frac{w(B(y,r))^{1/{\alpha}-1/q}}{w(B(y,2^{j+1}r))^{1/{\alpha}-1/q}}.
\end{split}
\end{equation}
Notice that if $w\in A_p$ for $1\leq p<\infty$, then $w\in A_\infty$. By using the inequality (\ref{compare}) with exponent $\delta>0$ and the fact that $\alpha<q$, we find that
\begin{align}\label{psi1}
\sum_{j=1}^\infty\frac{w(B(y,r))^{1/{\alpha}-1/q}}{w(B(y,2^{j+1}r))^{1/{\alpha}-1/q}}
&\leq C\sum_{j=1}^\infty\left(\frac{|B(y,r)|}{|B(y,2^{j+1}r)|}\right)^{\delta(1/{\alpha}-1/q)}\notag\\
&= C\sum_{j=1}^\infty\left(\frac{1}{2^{(j+1)n}}\right)^{\delta(1/{\alpha}-1/q)}\notag\\
&\leq C,
\end{align}
where the last series is convergent since $\delta(1/{\alpha}-1/q)>0$. Therefore by taking the $L^q_{\mu}$-norm of both sides of \eqref{I}(with respect to the variable $y$), and then using Minkowski's inequality, \eqref{I1yr}, \eqref{I2yr} and \eqref{psi1}, we have
\begin{equation*}
\begin{split}
&\Big\|w(B(y,r))^{1/{\alpha}-1/p-1/q}\big\|T_\theta(f)\cdot\chi_{B(y,r)}\big\|_{L^p_w}\Big\|_{L^q_{\mu}}\\
&\leq\big\|I_1(y,r)\big\|_{L^q_{\mu}}+\big\|I_2(y,r)\big\|_{L^q_{\mu}}\\
&\leq C\Big\|w(B(y,2r))^{1/{\alpha}-1/p-1/q}\big\|f\cdot\chi_{B(y,2r)}\big\|_{L^p_w}\Big\|_{L^q_{\mu}}\\
&+C\sum_{j=1}^\infty\Big\|w\big(B(y,2^{j+1}r)\big)^{1/{\alpha}-1/p-1/q}\big\|f\cdot\chi_{B(y,2^{j+1}r)}\big\|_{L^p_w}\Big\|_{L^q_{\mu}}
\times\frac{w(B(y,r))^{1/{\alpha}-1/q}}{w(B(y,2^{j+1}r))^{1/{\alpha}-1/q}}\\
&\leq C\big\|f\big\|_{(L^p,L^q)^{\alpha}(w;\mu)}+C\big\|f\big\|_{(L^p,L^q)^{\alpha}(w;\mu)}
\times\sum_{j=1}^\infty\frac{w(B(y,r))^{1/{\alpha}-1/q}}{w(B(y,2^{j+1}r))^{1/{\alpha}-1/q}}\\
&\leq C\big\|f\big\|_{(L^p,L^q)^{\alpha}(w;\mu)}.
\end{split}
\end{equation*}
Thus, by taking the supremum over all $r>0$, we complete the proof of Theorem \ref{mainthm:1}.
\end{proof}

\begin{proof}[Proof of Theorem $\ref{mainthm:2}$]
Let $p=1$, $1\leq\alpha<q\leq\infty$ and $f\in(L^1,L^q)^{\alpha}(w;\mu)$ with $w\in A_1$ and $\mu\in\Delta_2$. For an arbitrary ball $B=B(y,r)\subset\mathbb R^n$ with $y\in\mathbb R^n$ and $r>0$, we represent $f$ as
\begin{equation*}
f=f\cdot\chi_{2B}+f\cdot\chi_{(2B)^c}:=f_1+f_2;
\end{equation*}
then by the linearity of the $\theta$-type Calder\'on--Zygmund operator $T_{\theta}$, one can write
\begin{align}\label{Iprime}
&w(B(y,r))^{1/{\alpha}-1-1/q}\big\|T_{\theta}(f)\cdot\chi_{B(y,r)}\big\|_{WL^1_w}\notag\\
&\leq 2\cdot w(B(y,r))^{1/{\alpha}-1-1/q}\big\|T_{\theta}(f_1)\cdot\chi_{B(y,r)}\big\|_{WL^1_w}\notag\\
&+2\cdot w(B(y,r))^{1/{\alpha}-1-1/q}\big\|T_{\theta}(f_2)\cdot\chi_{B(y,r)}\big\|_{WL^1_w}\notag\\
&:=I'_1(y,r)+I'_2(y,r).
\end{align}
We first consider the term $I'_1(y,r)$. By the weighted weak $(1,1)$ boundedness of $T_{\theta}$ (see Theorem \ref{strongweak}), we have
\begin{align}\label{I1prime}
I'_1(y,r)&\leq 2\cdot w(B(y,r))^{1/{\alpha}-1-1/q}\big\|T_\theta(f_1)\big\|_{WL^1_w}\notag\\
&\leq C\cdot w(B(y,r))^{1/{\alpha}-1-1/q}
\bigg(\int_{B(y,2r)}|f(x)| w(x)\,dx\bigg)\notag\\
&=C\cdot w(B(y,2r))^{1/{\alpha}-1-1/q}\big\|f\cdot\chi_{B(y,2r)}\big\|_{L^1_w}\notag\\
&\times \frac{w(B(y,r))^{1/{\alpha}-1-1/q}}{w(B(y,2r))^{1/{\alpha}-1-1/q}}.
\end{align}
Moreover, since $1/{\alpha}-1-1/q<0$ and $w\in A_1$, then we apply inequality \eqref{weights} to obtain that
\begin{equation}\label{doubling2}
\frac{w(B(y,r))^{1/{\alpha}-1-1/q}}{w(B(y,2r))^{1/{\alpha}-1-1/q}}\leq C.
\end{equation}
Substituting the above inequality \eqref{doubling2} into \eqref{I1prime}, we thus obtain
\begin{equation}\label{WI1yr}
I'_1(y,r)\leq C\cdot w(B(y,2r))^{1/{\alpha}-1-1/q}\big\|f\cdot\chi_{B(y,2r)}\big\|_{L^1_w}.
\end{equation}
As for the second term $I'_2(y,r)$, it follows directly from Chebyshev's inequality and the pointwise estimate \eqref{pointwise1} that
\begin{equation*}
\begin{split}
I'_2(y,r)&\leq2\cdot w(B(y,r))^{1/{\alpha}-1-1/q}\int_{B(y,r)}\big|T_\theta(f_2)(x)\big|w(x)\,dx\\
&\leq C\cdot w(B(y,r))^{1/{\alpha}-1/q}
\sum_{j=1}^\infty\frac{1}{|B(y,2^{j+1}r)|}\int_{B(y,2^{j+1}r)}|f(z)|\,dz.
\end{split}
\end{equation*}
Another application of $A_1$ condition on $w$ gives that
\begin{equation*}
\begin{split}
&\frac{1}{|B(y,2^{j+1}r)|}\int_{B(y,2^{j+1}r)}|f(z)|\,dz\\
&\leq C\cdot\frac{1}{w(B(y,2^{j+1}r))}\cdot\underset{z\in B(y,2^{j+1}r)}{\mbox{ess\,inf}}\;w(z)\int_{B(y,2^{j+1}r)}|f(z)|\,dz\\
&\leq C\cdot\frac{1}{w(B(y,2^{j+1}r))}\bigg(\int_{B(y,2^{j+1}r)}|f(z)|w(z)\,dz\bigg).
\end{split}
\end{equation*}
Consequently,
\begin{equation}\label{WI2yr}
\begin{split}
I'_2(y,r)&\leq C\cdot w(B(y,r))^{1/{\alpha}-1/q}\\
&\times\sum_{j=1}^\infty\bigg(\int_{B(y,2^{j+1}r)}|f(z)|w(z)\,dz\bigg)\cdot w\big(B(y,2^{j+1}r)\big)^{-1}\\
&=C\sum_{j=1}^\infty w\big(B(y,2^{j+1}r)\big)^{1/{\alpha}-1-1/q}\big\|f\cdot\chi_{B(y,2^{j+1}r)}\big\|_{L^1_w}\\
&\times\frac{w(B(y,r))^{1/{\alpha}-1/q}}{w(B(y,2^{j+1}r))^{1/{\alpha}-1/q}}.
\end{split}
\end{equation}
Therefore by taking the $L^q_{\mu}$-norm of both sides of \eqref{Iprime}(with respect to the variable $y$), and then using Minkowski's inequality, \eqref{WI1yr}, \eqref{WI2yr}, we have
\begin{equation*}
\begin{split}
&\Big\|w(B(y,r))^{1/{\alpha}-1-1/q}\big\|T_\theta(f)\cdot\chi_{B(y,r)}\big\|_{WL^1_w}\Big\|_{L^q_{\mu}}\\
&\leq\big\|I'_1(y,r)\big\|_{L^q_{\mu}}+\big\|I'_2(y,r)\big\|_{L^q_{\mu}}\\
&\leq C\Big\|w(B(y,2r))^{1/{\alpha}-1-1/q}\big\|f\cdot\chi_{B(y,2r)}\big\|_{L^1_w}\Big\|_{L^q_{\mu}}\\
&+C\sum_{j=1}^\infty\Big\|w\big(B(y,2^{j+1}r)\big)^{1/{\alpha}-1-1/q}\big\|f\cdot\chi_{B(y,2^{j+1}r)}\big\|_{L^1_w}\Big\|_{L^q_{\mu}}
\times\frac{w(B(y,r))^{1/{\alpha}-1/q}}{w(B(y,2^{j+1}r))^{1/{\alpha}-1/q}}\\
&\leq C\big\|f\big\|_{(L^1,L^q)^{\alpha}(w;\mu)}+C\big\|f\big\|_{(L^1,L^q)^{\alpha}(w;\mu)}
\times\sum_{j=1}^\infty\frac{w(B(y,r))^{1/{\alpha}-1/q}}{w(B(y,2^{j+1}r))^{1/{\alpha}-1/q}}\\
&\leq C\big\|f\big\|_{(L^1,L^q)^{\alpha}(w;\mu)},
\end{split}
\end{equation*}
where in the last inequality we have used inequality \eqref{psi1}. Thus, by taking the supremum over all $r>0$, we finish the proof of Theorem \ref{mainthm:2}.
\end{proof}

\section{Proofs of Theorems \ref{mainthm:3} and \ref{mainthm:4}}

For the results involving commutators, we need the following properties of $BMO$ functions.
\begin{lemma}\label{BMO}
Let $b$ be a function in $BMO(\mathbb R^n)$. Then

$(i)$ For every ball $B$ in $\mathbb R^n$ and for all $j\in\mathbb Z^+$,
\begin{equation*}
\big|b_{2^{j+1}B}-b_B\big|\leq C\cdot(j+1)\|b\|_*.
\end{equation*}

$(ii)$ For every ball $B$ in $\mathbb R^n$ and for all $w\in A_p$ with $1\leq p<\infty$,
\begin{equation*}
\bigg(\int_B\big|b(x)-b_B\big|^pw(x)\,dx\bigg)^{1/p}\leq C\|b\|_*\cdot w(B)^{1/p}.
\end{equation*}
\end{lemma}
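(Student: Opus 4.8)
The plan is to establish the two parts separately; both are classical consequences of the definition of the $BMO$ norm together with the John--Nirenberg inequality. For part $(i)$ I would telescope along the chain $B\subset 2B\subset\cdots\subset 2^{j+1}B$. Since averages telescope,
\[
b_{2^{j+1}B}-b_B=\sum_{k=0}^{j}\bigl(b_{2^{k+1}B}-b_{2^{k}B}\bigr),
\]
so it suffices to bound each consecutive difference. Using that $b_{2^{k}B}$ is the mean of $b$ over $2^{k}B$, that $|2^{k+1}B|=2^{n}|2^{k}B|$, and the definition of $\|b\|_{*}$, one gets
\begin{align*}
\bigl|b_{2^{k+1}B}-b_{2^{k}B}\bigr|
&\le\frac{1}{|2^{k}B|}\int_{2^{k}B}\bigl|b(x)-b_{2^{k+1}B}\bigr|\,dx\\
&\le\frac{2^{n}}{|2^{k+1}B|}\int_{2^{k+1}B}\bigl|b(x)-b_{2^{k+1}B}\bigr|\,dx\le 2^{n}\|b\|_{*}.
\end{align*}
Summing the $j+1$ terms yields $\bigl|b_{2^{j+1}B}-b_B\bigr|\le (j+1)\,2^{n}\|b\|_{*}$, which is the assertion with $C=2^{n}$.

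For part $(ii)$ the essential input is the John--Nirenberg inequality: there exist dimensional constants $c_{1},c_{2}>0$ so that for every ball $B$ and every $\lambda>0$,
\[
\bigl|\{x\in B:|b(x)-b_B|>\lambda\}\bigr|\le c_{1}|B|\exp\!\Bigl(-\frac{c_{2}\lambda}{\|b\|_{*}}\Bigr).
\]
Since $w\in A_{p}\subset A_{\infty}$, applying the comparison inequality \eqref{compare} with exponent $\delta>0$ to the level set $E_{\lambda}=\{x\in B:|b(x)-b_B|>\lambda\}$ gives
\[
w(E_{\lambda})\le C\,w(B)\Bigl(\frac{|E_{\lambda}|}{|B|}\Bigr)^{\delta}\le C\,w(B)\exp\!\Bigl(-\frac{c_{2}\delta\lambda}{\|b\|_{*}}\Bigr).
\]
I would then invoke the layer-cake representation
\[
\int_{B}|b(x)-b_B|^{p}w(x)\,dx=p\int_{0}^{\infty}\lambda^{p-1}w(E_{\lambda})\,d\lambda,
\]
and insert the exponential bound, so that the right-hand side is dominated by $C\,w(B)\,p\int_{0}^{\infty}\lambda^{p-1}e^{-c_{2}\delta\lambda/\|b\|_{*}}\,d\lambda=C\,w(B)\,\Gamma(p+1)(c_{2}\delta)^{-p}\|b\|_{*}^{p}$. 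Extracting $p$-th roots yields $\bigl(\int_{B}|b(x)-b_B|^{p}w(x)\,dx\bigr)^{1/p}\le C\|b\|_{*}\,w(B)^{1/p}$, as claimed.

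There is no genuine obstacle here: every step is either a quotation of John--Nirenberg or an elementary integral manipulation. The only point that deserves care is that the exponent $\delta$ in \eqref{compare}, and hence the final constant $C$, depend only on the $A_{\infty}$ (equivalently $A_{p}$) constant of $w$ and on $p$ and $n$, so that the estimate in $(ii)$ is uniform over all balls $B$. An alternative route for $(ii)$ avoids \eqref{compare} entirely: combine the reverse H\"older inequality for the $A_{p}$ weight $w$ with H\"older's inequality and the $L^{s}$ form of John--Nirenberg, namely $\frac1{|B|}\int_{B}|b-b_B|^{s}\,dx\le C_{s}\|b\|_{*}^{s}$ for every $s<\infty$; this reaches the same conclusion with comparable effort.
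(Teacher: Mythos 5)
Your proof is correct. Note that the paper itself does not prove Lemma 3.1 at all: it simply refers the reader to Stein's book for part $(i)$ and to \cite{wang} for part $(ii)$, so your write-up supplies the standard self-contained argument that those references contain. Part $(i)$ via telescoping through the chain $B\subset 2B\subset\cdots\subset 2^{j+1}B$ with the factor $|2^{k+1}B|/|2^kB|=2^n$ is exactly the classical proof, and part $(ii)$ via John--Nirenberg combined with the $A_\infty$ comparison estimate \eqref{compare} and the layer-cake formula is the usual route to the weighted $L^p$ oscillation bound; the exponential decay $w(E_\lambda)\le C\,w(B)\exp(-c_2\delta\lambda/\|b\|_*)$ makes the integral $p\int_0^\infty\lambda^{p-1}w(E_\lambda)\,d\lambda$ converge for every $1\le p<\infty$, and the resulting constant depends only on $n$, $p$ and the $A_p$ constant of $w$, which is the uniformity the lemma needs (and which the later proofs, e.g.\ the estimates of $J_3$, $J_5$ and $J'_3$, implicitly use). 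The only cosmetic caveat is that John--Nirenberg is usually stated for cubes, but the ball version follows at once by comparing a ball with its circumscribed cube, so nothing is lost; your alternative route through reverse H\"older and the $L^s$ form of John--Nirenberg would work equally well.
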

\begin{proof}
For the proof of $(i)$, we refer the reader to \cite{stein2}. For the proof of $(ii)$, we refer the reader to \cite{wang}.
\end{proof}

\begin{proof}[Proof of Theorem $\ref{mainthm:3}$]
Let $1<p\leq\alpha<q\leq\infty$ and $f\in(L^p,L^q)^{\alpha}(w;\mu)$ with $w\in A_p$ and $\mu\in\Delta_2$. For each fixed ball $B=B(y,r)\subset\mathbb R^n$ with $y\in\mathbb R^n$ and $r>0$, as before, we represent $f$ as $f=f_1+f_2$, where $f_1=f\cdot\chi_{2B}$, $2B=B(y,2r)\subset\mathbb R^n$. By the linearity of the commutator operator $[b,T_{\theta}]$, we write
\begin{align}\label{J}
&w(B(y,r))^{1/{\alpha}-1/p-1/q}\big\|[b,T_{\theta}](f)\cdot\chi_{B(y,r)}\big\|_{L^p_w}\notag\\
&=w(B(y,r))^{1/{\alpha}-1/p-1/q}\bigg(\int_{B(y,r)}\big|[b,T_{\theta}](f)(x)\big|^pw(x)\,dx\bigg)^{1/p}\notag\\
&\leq w(B(y,r))^{1/{\alpha}-1/p-1/q}\bigg(\int_{B(y,r)}\big|[b,T_{\theta}](f_1)(x)\big|^pw(x)\,dx\bigg)^{1/p}\notag\\
&+w(B(y,r))^{1/{\alpha}-1/p-1/q}\bigg(\int_{B(y,r)}\big|[b,T_{\theta}](f_2)(x)\big|^pw(x)\,dx\bigg)^{1/p}\notag\\
&:=J_1(y,r)+J_2(y,r).
\end{align}
Since $T_{\theta}$ is bounded on $L^p_w(\mathbb R^n)$ for $1<p<\infty$ and $w\in A_p$, according to Theorem \ref{strongweak}, then by the well-known boundedness criterion for commutators of linear operators, which was obtained by Alvarez et al. in \cite{alvarez}, we know that $[b,T_{\theta}]$ is also bounded on $L^p_w(\mathbb R^n)$ for all $1<p<\infty$ and $w\in A_p$, whenever $b\in BMO(\mathbb R^n)$. This fact together with inequality (\ref{doubling1}) implies that
\begin{align}\label{J1yr}
J_1(y,r)&\leq w(B(y,r))^{1/{\alpha}-1/p-1/q}\big\|[b,T_{\theta}](f_1)\big\|_{L^p_w}\notag\\
&\leq C\cdot w(B(y,r))^{1/{\alpha}-1/p-1/q}
\bigg(\int_{B(y,2r)}|f(x)|^p w(x)\,dx\bigg)^{1/p}\notag\\
&=C\cdot w(B(y,2r))^{1/{\alpha}-1/p-1/q}\big\|f\cdot\chi_{B(y,2r)}\big\|_{L^p_w}\notag\\
&\times \frac{w(B(y,r))^{1/{\alpha}-1/p-1/q}}{w(B(y,2r))^{1/{\alpha}-1/p-1/q}}\notag\\
&\leq C\cdot w(B(y,2r))^{1/{\alpha}-1/p-1/q}\big\|f\cdot\chi_{B(y,2r)}\big\|_{L^p_w}.
\end{align}
Let us now turn to the estimate of $J_2(y,r)$. By definition, for any $x\in B(y,r)$, we have
\begin{equation*}
\big|[b,T_\theta](f_2)(x)\big|\leq\big|b(x)-b_{B}\big|\cdot\big|T_\theta(f_2)(x)\big|
+\Big|T_\theta\big([b_{B}-b]f_2\big)(x)\Big|.
\end{equation*}
In the proof of Theorem \ref{mainthm:1}, we have already shown that (see \eqref{pointwise1})
\begin{equation*}
\big|T_\theta(f_2)(x)\big|\leq C\sum_{j=1}^\infty\frac{1}{|B(y,2^{j+1}r)|}\int_{B(y,2^{j+1}r)}|f(z)|\,dz.
\end{equation*}
Following the same arguments as in \eqref{pointwise1}, we can also prove that
\begin{align}\label{pointwise2}
\Big|T_\theta\big([b_{B}-b]f_2\big)(x)\Big|
&\leq C\int_{\mathbb R^n}\frac{|[b_B-b(z)]f_2(z)|}{|x-z|^n}dz\\
&\leq C\int_{B(y,2r)^c}\frac{|[b_B-b(z)]f(z)|}{|y-z|^n}dz\notag\\
&=C\sum_{j=1}^\infty\int_{B(y,2^{j+1}r)\backslash B(y,2^jr)}\frac{|b(z)-b_{B}|\cdot|f(z)|}{|y-z|^n}dz\notag\\
&\leq C\sum_{j=1}^\infty\frac{1}{|B(y,2^{j+1}r)|}\int_{B(y,2^{j+1}r)}\big|b(z)-b_{B}\big|\cdot|f(z)|\,dz\notag.
\end{align}
Hence, from the above two pointwise estimates for $\big|T_\theta(f_2)(x)\big|$ and $\big|T_\theta\big([b_{B}-b]f_2\big)(x)\big|$, it follows that
\begin{equation*}
\begin{split}
J_2(y,r)&\leq C\cdot w(B(y,r))^{1/{\alpha}-1/p-1/q}\bigg(\int_B\big|b(x)-b_B\big|^pw(x)\,dx\bigg)^{1/p}\\
&\times\bigg(\sum_{j=1}^\infty\frac{1}{|B(y,2^{j+1}r)|}\int_{B(y,2^{j+1}r)}|f(z)|\,dz\bigg)\\
&+C\cdot w(B(y,r))^{1/{\alpha}-1/q}
\sum_{j=1}^\infty\frac{1}{|B(y,2^{j+1}r)|}\int_{B(y,2^{j+1}r)}\big|b_{B(y,2^{j+1}r)}-b_{B(y,r)}\big|\cdot|f(z)|\,dz\\
&+C\cdot w(B(y,r))^{1/{\alpha}-1/q}
\sum_{j=1}^\infty\frac{1}{|B(y,2^{j+1}r)|}\int_{B(y,2^{j+1}r)}\big|b(z)-b_{B(y,2^{j+1}r)}\big|\cdot|f(z)|\,dz\\
&:=J_3(y,r)+J_4(y,r)+J_5(y,r).
\end{split}
\end{equation*}
Below we will give the estimates of $J_3(y,r)$, $J_4(y,r)$ and $J_5(y,r)$, respectively. Using $(ii)$ of Lemma \ref{BMO}, H\"older's inequality and the $A_p$ condition, we obtain
\begin{equation*}
\begin{split}
J_3(y,r)&\leq C\|b\|_*\cdot w(B(y,r))^{1/{\alpha}-1/q}
\times\sum_{j=1}^\infty\bigg(\frac{1}{|B(y,2^{j+1}r)|}\int_{B(y,2^{j+1}r)}|f(z)|\,dz\bigg)\\
&\leq C\|b\|_*\cdot w(B(y,r))^{1/{\alpha}-1/q}
\sum_{j=1}^\infty\frac{1}{|B(y,2^{j+1}r)|}
\bigg(\int_{B(y,2^{j+1}r)}|f(z)|^pw(z)\,dz\bigg)^{1/p}\\
&\times\left(\int_{B(y,2^{j+1}r)}w(z)^{-{p'}/p}\,dz\right)^{1/{p'}}\\
&\leq C\|b\|_*\cdot w(B(y,r))^{1/{\alpha}-1/q}\\
&\times\sum_{j=1}^\infty\bigg(\int_{B(y,2^{j+1}r)}|f(z)|^pw(z)\,dz\bigg)^{1/p}\cdot w\big(B(y,2^{j+1}r)\big)^{-1/p}.
\end{split}
\end{equation*}
Applying $(i)$ of Lemma \ref{BMO}, H\"older's inequality and the $A_p$ condition, we can deduce that
\begin{equation*}
\begin{split}
J_4(y,r)&\leq C\|b\|_*\cdot w(B(y,r))^{1/{\alpha}-1/q}
\times\sum_{j=1}^\infty\frac{(j+1)}{|B(y,2^{j+1}r)|}\int_{B(y,2^{j+1}r)}|f(z)|\,dz\\
&\leq C\|b\|_*\cdot w(B(y,r))^{1/{\alpha}-1/q}
\sum_{j=1}^\infty\frac{(j+1)}{|B(y,2^{j+1}r)|}
\bigg(\int_{B(y,2^{j+1}r)}|f(z)|^pw(z)\,dz\bigg)^{1/p}\\
&\times\left(\int_{B(y,2^{j+1}r)}w(z)^{-{p'}/p}\,dz\right)^{1/{p'}}\\
&\leq C\|b\|_*\cdot w(B(y,r))^{1/{\alpha}-1/q}\\
&\times\sum_{j=1}^\infty\big(j+1\big)\cdot\bigg(\int_{B(y,2^{j+1}r)}|f(z)|^pw(z)\,dz\bigg)^{1/p}\cdot w\big(B(y,2^{j+1}r)\big)^{-1/p}.
\end{split}
\end{equation*}
It remains to estimate the last term $J_5(y,r)$. An application of H\"older's inequality gives us that
\begin{equation*}
\begin{split}
J_5(y,r)&\leq C\cdot w(B(y,r))^{1/{\alpha}-1/q}\sum_{j=1}^\infty\frac{1}{|B(y,2^{j+1}r)|}
\bigg(\int_{B(y,2^{j+1}r)}|f(z)|^pw(z)\,dz\bigg)^{1/p}\\
&\times\left(\int_{B(y,2^{j+1}r)}\big|b(z)-b_{B(y,2^{j+1}r)}\big|^{p'}w(z)^{-{p'}/p}\,dz\right)^{1/{p'}}.
\end{split}
\end{equation*}
If we set $\nu(z)=w(z)^{-{p'}/p}$, then we have $\nu\in A_{p'}$ because $w\in A_p$(see \cite{duoand,garcia}). Thus, it follows from $(ii)$ of Lemma \ref{BMO} and the $A_p$ condition that
\begin{equation*}
\begin{split}
&\left(\int_{B(y,2^{j+1}r)}\big|b(z)-b_{B(y,2^{j+1}r)}\big|^{p'}\nu(z)\,dz\right)^{1/{p'}}\\
&\leq C\|b\|_*\cdot\nu\big(B(y,2^{j+1}r)\big)^{1/{p'}}\\
&=C\|b\|_*\cdot\left(\int_{B(y,2^{j+1}r)}w(z)^{-{p'}/p}\,dz\right)^{1/{p'}}\\
&\leq C\|b\|_*\cdot\frac{|B(y,2^{j+1}r)|}{w(B(y,2^{j+1}r))^{1/p}}.
\end{split}
\end{equation*}
Therefore,
\begin{equation*}
\begin{split}
J_5(y,r)&\leq C\|b\|_*\cdot w(B(y,r))^{1/{\alpha}-1/q}\\
&\times\sum_{j=1}^\infty\bigg(\int_{B(y,2^{j+1}r)}|f(z)|^pw(z)\,dz\bigg)^{1/p}\cdot w\big(B(y,2^{j+1}r)\big)^{-1/p}.
\end{split}
\end{equation*}
Summarizing the above discussions, we conclude that
\begin{align}\label{J2yr}
J_2(y,r)&\leq C\|b\|_*\cdot w(B(y,r))^{1/{\alpha}-1/q}\notag\\
&\times\sum_{j=1}^\infty\big(j+1\big)\cdot\bigg(\int_{B(y,2^{j+1}r)}|f(z)|^pw(z)\,dz\bigg)^{1/p}\cdot w\big(B(y,2^{j+1}r)\big)^{-1/p}\notag\\
&=C\sum_{j=1}^\infty w\big(B(y,2^{j+1}r)\big)^{1/{\alpha}-1/p-1/q}\big\|f\cdot\chi_{B(y,2^{j+1}r)}\big\|_{L^p_w}\notag\\
&\times\big(j+1\big)\cdot\frac{w(B(y,r))^{1/{\alpha}-1/q}}{w(B(y,2^{j+1}r))^{1/{\alpha}-1/q}}.
\end{align}
Notice that when $w\in A_p$ with $1\leq p<\infty$, we have $w\in A_\infty$. Then by using inequality (\ref{compare}) with exponent $\delta>0$ together with the fact that $\alpha<q$, we thus obtain
\begin{align}\label{psi3}
\sum_{j=1}^\infty\big(j+1\big)\cdot\frac{w(B(y,r))^{1/{\alpha}-1/q}}{w(B(y,2^{j+1}r))^{1/{\alpha}-1/q}}
&\leq C\sum_{j=1}^\infty\big(j+1\big)\cdot\left(\frac{|B(y,r)|}{|B(y,2^{j+1}r)|}\right)^{\delta(1/{\alpha}-1/q)}\notag\\
&= C\sum_{j=1}^\infty\big(j+1\big)\cdot\left(\frac{1}{2^{(j+1)n}}\right)^{\delta(1/{\alpha}-1/q)}\notag\\
&\leq C,
\end{align}
where the last series is convergent since the exponent $\delta(1/{\alpha}-1/q)$ is positive.
Therefore by taking the $L^q_{\mu}$-norm of both sides of \eqref{J}(with respect to the variable $y$), and then using Minkowski's inequality, \eqref{J1yr}, \eqref{J2yr} and \eqref{psi3}, we can get
\begin{equation*}
\begin{split}
&\Big\|w(B(y,r))^{1/{\alpha}-1/p-1/q}\big\|[b,T_{\theta}](f)\cdot\chi_{B(y,r)}\big\|_{L^p_w}\Big\|_{L^q_{\mu}}\\
&\leq\big\|J_1(y,r)\big\|_{L^q_{\mu}}+\big\|J_2(y,r)\big\|_{L^q_{\mu}}\\
&\leq C\Big\|w(B(y,2r))^{1/{\alpha}-1/p-1/q}\big\|f\cdot\chi_{B(y,2r)}\big\|_{L^p_w}\Big\|_{L^q_{\mu}}\\
&+C\sum_{j=1}^\infty\Big\|w\big(B(y,2^{j+1}r)\big)^{1/{\alpha}-1/p-1/q}\big\|f\cdot\chi_{B(y,2^{j+1}r)}\big\|_{L^p_w}\Big\|_{L^q_{\mu}}\\
&\times\big(j+1\big)\cdot\frac{w(B(y,r))^{1/{\alpha}-1/q}}{w(B(y,2^{j+1}r))^{1/{\alpha}-1/q}}\\
&\leq C\big\|f\big\|_{(L^p,L^q)^{\alpha}(w;\mu)}+C\big\|f\big\|_{(L^p,L^q)^{\alpha}(w;\mu)}
\times\sum_{j=1}^\infty\big(j+1\big)\cdot\frac{w(B(y,r))^{1/{\alpha}-1/q}}{w(B(y,2^{j+1}r))^{1/{\alpha}-1/q}}\\
&\leq C\big\|f\big\|_{(L^p,L^q)^{\alpha}(w;\mu)}.
\end{split}
\end{equation*}
Thus, by taking the supremum over all $r>0$, we conclude the proof of Theorem \ref{mainthm:3}.
\end{proof}

\begin{proof}[Proof of Theorem $\ref{mainthm:4}$]
For any fixed ball $B=B(y,r)$ in $\mathbb R^n$, as before, we represent $f$ as $f=f_1+f_2$, where $f_1=f\cdot\chi_{2B}$, $2B=B(y,2r)\subset\mathbb R^n$. Then for any given $\lambda>0$, by the linearity of the commutator operator $[b,T_{\theta}]$, one can write
\begin{align}\label{Jprime}
&w(B(y,r))^{1/{\alpha}-1-1/q}\cdot w\big(\big\{x\in B(y,r):\big|[b,T_{\theta}](f)(x)\big|>\lambda\big\}\big)\notag\\
\leq &w(B(y,r))^{1/{\alpha}-1-1/q}\cdot w\big(\big\{x\in B(y,r):\big|[b,T_\theta](f_1)(x)\big|>\lambda/2\big\}\big)\notag\\
&+w(B(y,r))^{1/{\alpha}-1-1/q}\cdot w\big(\big\{x\in B(y,r):\big|[b,T_\theta](f_2)(x)\big|>\lambda/2\big\}\big)\notag\\
:=&J'_1(y,r)+J'_2(y,r).
\end{align}
In view of Theorem \ref{commutator}, we get
\begin{equation*}
\begin{split}
J'_1(y,r)&\leq C\cdot w(B(y,r))^{1/{\alpha}-1-1/q}
\int_{\mathbb R^n}\Phi\left(\frac{|f_1(x)|}{\lambda}\right)\cdot w(x)\,dx\\
&=C\cdot\frac{w(B(y,r))^{1/{\alpha}-1-1/q}}{w(B(y,2r))^{1/{\alpha}-1-1/q}}
\cdot\frac{w(B(y,2r))^{1/{\alpha}-1/q}}{w(B(y,2r))}
\int_{B(y,2r)}\Phi\left(\frac{|f(x)|}{\lambda}\right)\cdot w(x)\,dx.
\end{split}
\end{equation*}
Moreover, since $w\in A_1$, by the previous estimates \eqref{doubling2} and \eqref{main esti1}, we have
\begin{align}\label{WJ1yr}
J'_1(y,r)&\leq C\cdot\frac{w(B(y,2r))^{1/{\alpha}-1/q}}{w(B(y,2r))}
\int_{B(y,2r)}\Phi\left(\frac{|f(x)|}{\lambda}\right)\cdot w(x)\,dx\notag\\
&\leq C\cdot w(B(y,2r))^{1/{\alpha}-1/q}\bigg\|\Phi\left(\frac{|f|}{\,\lambda\,}\right)\bigg\|_{L\log L(w),B(y,2r)}.
\end{align}
We now turn to deal with the term $J'_2(y,r)$. Recall that the following inequality
\begin{equation*}
\big|[b,T_\theta](f_2)(x)\big|\leq\big|b(x)-b_{B(y,r)}\big|\cdot\big|T_\theta(f_2)(x)\big|
+\Big|T_\theta\big([b_{B(y,r)}-b]f_2\big)(x)\Big|
\end{equation*}
is valid. So we can further decompose $J'_2(y,r)$ as
\begin{equation*}
\begin{split}
J'_2(y,r)\leq&w(B(y,r))^{1/{\alpha}-1-1/q}\cdot
w\big(\big\{x\in B(y,r):\big|b(x)-b_{B(y,r)}\big|\cdot\big|T_\theta(f_2)(x)\big|>\lambda/4\big\}\big)\\
&+w(B(y,r))^{1/{\alpha}-1-1/q}\cdot
w\Big(\Big\{x\in B(y,r):\Big|T_\theta\big([b_{B(y,r)}-b]f_2\big)(x)\Big|>\lambda/4\Big\}\Big)\\
:=&J'_3(y,r)+J'_4(y,r).
\end{split}
\end{equation*}
By using the previous pointwise estimate \eqref{pointwise1}, Chebyshev's inequality together with $(ii)$ of Lemma \ref{BMO}, we can deduce that
\begin{equation*}
\begin{split}
J'_3(y,r)&\leq w(B(y,r))^{1/{\alpha}-1-1/q}\cdot\frac{\,4\,}{\lambda}
\int_{B(y,r)}\big|b(x)-b_{B(y,r)}\big|\cdot\big|T_\theta(f_2)(x)\big|w(x)\,dx\\
&\leq C\cdot w(B(y,r))^{1/{\alpha}-1/q}
\sum_{j=1}^\infty\frac{1}{|B(y,2^{j+1}r)|}\int_{B(y,2^{j+1}r)}\frac{|f(z)|}{\lambda}\,dz\\
&\times\frac{1}{w(B(y,r))}\int_{B(y,r)}\big|b(x)-b_{B(y,r)}\big|w(x)\,dx\\
&\leq C\|b\|_*\sum_{j=1}^\infty\frac{1}{|B(y,2^{j+1}r)|}\int_{B(y,2^{j+1}r)}\frac{|f(z)|}{\lambda}\,dz
\times w(B(y,r))^{1/{\alpha}-1/q}.
\end{split}
\end{equation*}
Furthermore, note that $t\leq\Phi(t)=t\cdot(1+\log^+t)$ for any $t>0$. It then follows from the $A_1$ condition and the previous estimate \eqref{main esti1} that
\begin{equation*}
\begin{split}
J'_3(y,r)&\leq C\|b\|_*\sum_{j=1}^\infty\frac{1}{w(B(y,2^{j+1}r))}\int_{B(y,2^{j+1}r)}\frac{|f(z)|}{\lambda}\cdot w(z)\,dz
\times w(B(y,r))^{1/{\alpha}-1/q}\\
&\leq C\|b\|_*\sum_{j=1}^\infty\frac{1}{w(B(y,2^{j+1}r))}\int_{B(y,2^{j+1}r)}\Phi\left(\frac{|f(z)|}{\lambda}\right)\cdot w(z)\,dz
\times w(B(y,r))^{1/{\alpha}-1/q}\\
&\leq C\|b\|_*\sum_{j=1}^\infty\bigg\|\Phi\left(\frac{|f|}{\,\lambda\,}\right)\bigg\|_{L\log L(w),B(y,2^{j+1}r)}
\times w(B(y,r))^{1/{\alpha}-1/q}.
\end{split}
\end{equation*}
On the other hand,
applying the pointwise estimate \eqref{pointwise2} and Chebyshev's inequality, we get
\begin{equation*}
\begin{split}
J'_4(y,r)&\leq w(B(y,r))^{1/{\alpha}-1-1/q}\cdot\frac{\,4\,}{\lambda}
\int_{B(y,r)}\Big|T_{\theta}\big([b_{B(y,r)}-b]f_2\big)(x)\Big|w(x)\,dx\\
&\leq w(B(y,r))^{1/{\alpha}-1/q}\cdot\frac{\,C\,}{\lambda}
\sum_{j=1}^\infty\frac{1}{|B(y,2^{j+1}r)|}\int_{B(y,2^{j+1}r)}\big|b(z)-b_{B(y,r)}\big|\cdot|f(z)|\,dz\\
&\leq w(B(y,r))^{1/{\alpha}-1/q}\cdot\frac{\,C\,}{\lambda}
\sum_{j=1}^\infty\frac{1}{|B(y,2^{j+1}r)|}\int_{B(y,2^{j+1}r)}\big|b(z)-b_{B(y,2^{j+1}r)}\big|\cdot|f(z)|\,dz\\
&+w(B(y,r))^{1/{\alpha}-1/q}\cdot\frac{\,C\,}{\lambda}
\sum_{j=1}^\infty\frac{1}{|B(y,2^{j+1}r)|}\int_{B(y,2^{j+1}r)}\big|b_{B(y,2^{j+1}r)}-b_{B(y,r)}\big|\cdot|f(z)|\,dz\\
&:=J'_5(y,r)+J'_6(y,r).
\end{split}
\end{equation*}
For the term $J'_5(y,r)$, since $w\in A_1$, it then follows from the $A_1$ condition and the fact $t\leq \Phi(t)$ that
\begin{equation*}
\begin{split}
J'_5(y,r)&\leq w(B(y,r))^{1/{\alpha}-1/q}\cdot\frac{\,C\,}{\lambda} \sum_{j=1}^\infty\frac{1}{w(B(y,2^{j+1}r))}\int_{B(y,2^{j+1}r)}\big|b(z)-b_{B(y,2^{j+1}r)}\big|\cdot|f(z)|w(z)\,dz\\
&\leq C\cdot w(B(y,r))^{1/{\alpha}-1/q}
\sum_{j=1}^\infty\frac{1}{w(B(y,2^{j+1}r))}\int_{B(y,2^{j+1}r)}\big|b(z)-b_{B(y,2^{j+1}r)}\big|
\cdot\Phi\left(\frac{|f(z)|}{\lambda}\right)w(z)\,dz.
\end{split}
\end{equation*}
Furthermore, we use the generalized H\"older's inequality with weight \eqref{Wholder} to obtain
\begin{equation*}
\begin{split}
J'_5(y,r)&\leq C\cdot w(B(y,r))^{1/{\alpha}-1/q}
\sum_{j=1}^\infty\big\|b-b_{B(y,2^{j+1}r)}\big\|_{\exp L(w),B(y,2^{j+1}r)}
\bigg\|\Phi\left(\frac{|f|}{\,\lambda\,}\right)\bigg\|_{L\log L(w),B(y,2^{j+1}r)}\\
&\leq C\|b\|_*\sum_{j=1}^\infty\bigg\|\Phi\left(\frac{|f|}{\,\lambda\,}\right)\bigg\|_{L\log L(w),B(y,2^{j+1}r)}
\times w(B(y,r))^{1/{\alpha}-1/q}.
\end{split}
\end{equation*}
In the last inequality, we have used the well-known fact that (see \cite{zhang})
\begin{equation}\label{Jensen}
\big\|b-b_{B}\big\|_{\exp L(w),B}\leq C\|b\|_*,\qquad \mbox{for any ball }B\subset\mathbb R^n.
\end{equation}
It is equivalent to the inequality (here $c_0$ is a universal constant)
\begin{equation*}
\frac{1}{w(B)}\int_B\exp\bigg(\frac{|b(z)-b_B|}{c_0\|b\|_*}\bigg)w(z)\,dz\leq C,
\end{equation*}
which is just a corollary of the well-known John--Nirenberg's inequality (see \cite{john}) and the comparison property of $A_1$ weights. For the last term $J'_6(y,r)$ we proceed as follows. Using $(i)$ of Lemma \ref{BMO} together with the facts that $w\in A_1$ and $t\leq\Phi(t)=t\cdot(1+\log^+t)$, we can deduce that
\begin{equation*}
\begin{split}
J'_6(y,r)&\leq C\cdot w(B(y,r))^{1/{\alpha}-1/q}
\sum_{j=1}^\infty(j+1)\|b\|_*\cdot\frac{1}{|B(y,2^{j+1}r)|}\int_{B(y,2^{j+1}r)}\frac{|f(z)|}{\lambda}\,dz\\
&\leq C\cdot w(B(y,r))^{1/{\alpha}-1/q}
\sum_{j=1}^\infty(j+1)\|b\|_*\cdot\frac{1}{w(B(y,2^{j+1}r))}\int_{B(y,2^{j+1}r)}\frac{|f(z)|}{\lambda}\cdot w(z)\,dz\\
&\leq C\|b\|_*\cdot w(B(y,r))^{1/{\alpha}-1/q}
\sum_{j=1}^\infty\frac{(j+1)}{w(B(y,2^{j+1}r))}\int_{B(y,2^{j+1}r)}\Phi\left(\frac{|f(z)|}{\lambda}\right)\cdot w(z)\,dz\\
&\leq C\|b\|_*\sum_{j=1}^\infty\big(j+1\big)\cdot\bigg\|\Phi\left(\frac{|f|}{\,\lambda\,}\right)\bigg\|_{L\log L(w),B(y,2^{j+1}r)}
\times w(B(y,r))^{1/{\alpha}-1/q},
\end{split}
\end{equation*}
where in the last inequality we have used the estimate \eqref{main esti1}. Summarizing the above discussions, we conclude that
\begin{align}\label{WJ2yr}
J'_2(y,r)&\leq C\|b\|_*\cdot\sum_{j=1}^\infty\big(j+1\big)\cdot\bigg\|\Phi\left(\frac{|f|}{\,\lambda\,}\right)\bigg\|_{L\log L(w),B(y,2^{j+1}r)}
\times w(B(y,r))^{1/{\alpha}-1/q}\notag\\
&=C\sum_{j=1}^\infty w\big(B(y,2^{j+1}r)\big)^{1/{\alpha}-1/q}
\bigg\|\Phi\left(\frac{|f|}{\,\lambda\,}\right)\bigg\|_{L\log L(w),B(y,2^{j+1}r)}\notag\\
&\times\big(j+1\big)\cdot\frac{w(B(y,r))^{1/{\alpha}-1/q}}{w(B(y,2^{j+1}r))^{1/{\alpha}-1/q}}.
\end{align}
Therefore by taking the $L^q_{\mu}$-norm of both sides of \eqref{Jprime}(with respect to the variable $y$), and then using Minkowski's inequality, \eqref{WJ1yr}, \eqref{WJ2yr}, we have
\begin{equation*}
\begin{split}
&\Big\|w(B(y,r))^{1/{\alpha}-1-1/q}\cdot w\big(\big\{x\in B(y,r):\big|[b,T_{\theta}](f)(x)\big|>\lambda\big\}\big)\Big\|_{L^q_{\mu}}\\
&\leq\big\|J'_1(y,r)\big\|_{L^q_{\mu}}+\big\|J'_2(y,r)\big\|_{L^q_{\mu}}\\
&\leq C\bigg\|w(B(y,2r))^{1/{\alpha}-1/q}\bigg\|\Phi\left(\frac{|f|}{\,\lambda\,}\right)\bigg\|_{L\log L(w),B(y,2r)}\bigg\|_{L^q_{\mu}}\\
&+C\sum_{j=1}^\infty\bigg\|w\big(B(y,2^{j+1}r)\big)^{1/{\alpha}-1/q}
\bigg\|\Phi\left(\frac{|f|}{\,\lambda\,}\right)\bigg\|_{L\log L(w),B(y,2^{j+1}r)}\bigg\|_{L^q_{\mu}}\\
&\times\big(j+1\big)\cdot\frac{w(B(y,r))^{1/{\alpha}-1/q}}{w(B(y,2^{j+1}r))^{1/{\alpha}-1/q}}\\
\end{split}
\end{equation*}
\begin{equation*}
\begin{split}
&\leq C\bigg\|\Phi\left(\frac{|f|}{\,\lambda\,}\right)\bigg\|_{(L\log L,L^q)^{\alpha}(w;\mu)}\\
&+C\bigg\|\Phi\left(\frac{|f|}{\,\lambda\,}\right)\bigg\|_{(L\log L,L^q)^{\alpha}(w;\mu)}
\times\sum_{j=1}^\infty\big(j+1\big)\cdot\frac{w(B(y,r))^{1/{\alpha}-1/q}}{w(B(y,2^{j+1}r))^{1/{\alpha}-1/q}}\\
&\leq C\bigg\|\Phi\left(\frac{|f|}{\,\lambda\,}\right)\bigg\|_{(L\log L,L^q)^{\alpha}(w;\mu)},
\end{split}
\end{equation*}
where the last inequality follows from \eqref{psi3}. This completes the proof of Theorem \ref{mainthm:4}.
\end{proof}

\section{Some results on two-weight problems}
In the last section, we consider related problems about two-weight, weak type $(p,p)$ inequalities with $1<p<\infty$. Let $\mathcal T$ be the classical Calder¨®n--Zygmund operator with standard kernel, that is, $\mathcal T=T_{\theta}$ when $\theta(t)=t^{\delta}$ with $0<\delta\leq1$. It is well known that $\mathcal T$ is a bounded operator on $L^p_w(\mathbb R^n)$ for all $1<p<\infty$ and $w\in A_p$, and of course, $\mathcal T$ is a bounded operator from $L^p_w(\mathbb R^n)$ into $WL^p_w(\mathbb R^n)$. In the two-weight context, however, the $A_p$ condition is NOT sufficient for the weak-type $(p,p)$ inequality for $\mathcal T$. More precisely, given a pair of weights $(u,v)$ and $p$, $1<p<\infty$, the weak-type inequality
\begin{equation}\label{T}
u\big(\big\{x\in\mathbb R^n:\big|\mathcal Tf(x)\big|>\lambda\big\}\big)
\leq \frac{C}{\lambda^p}\int_{\mathbb R^n}\big|f(x)\big|^p v(x)\,dx
\end{equation}
does not hold if $(u,v)\in A_p$: there exists a positive constant $C$ such that for every cube $Q\subset\mathbb R^n$,
\begin{equation}\label{two}
\left(\frac1{|Q|}\int_Q u(x)\,dx\right)^{1/p}\left(\frac1{|Q|}\int_Q v(x)^{-p'/p}\,dx\right)^{1/{p'}}\leq C<\infty,
\end{equation}
one can see \cite{cruz1,muckenhoupt} for some counter-examples. Here all cubes are assumed to have their sides parallel to the coordinate axes, $Q(x_0,\ell)$ will denote the cube centered at $x_0$ and has side length $\ell$. In \cite{cruz1,cruz2}, Cruz-Uribe and P\'erez considered the problem of finding sufficient conditions on a pair of weights $(u,v)$ such that $\mathcal T$ satisfies the weak-type $(p,p)$ inequality \eqref{T} ($1<p<\infty$). They showed in \cite{cruz2} that if we strengthened the $A_p$ condition \eqref{two} by adding a ``power bump" to the left-hand term, then inequality \eqref{T} holds for all $f\in L^p_v(\mathbb R^n)$. More specifically, if there exists a number $r>1$ such that for every cube $Q$ in $\mathbb R^n$,
\begin{equation}\label{assump1.1}
\left(\frac{1}{|Q|}\int_Q u(x)^r\,dx\right)^{1/{(rp)}}\left(\frac{1}{|Q|}\int_Q v(x)^{-p'/p}\,dx\right)^{1/{p'}}\leq C<\infty,
\end{equation}
then the classical Calder¨®n--Zygmund operator $\mathcal T$ is bounded from $L^p_v(\mathbb R^n)$ into $WL^p_u(\mathbb R^n)$. Moreover, in \cite{cruz1}, the authors improved this result by replacing the ``power bump" in \eqref{assump1.1} by a smaller ``Orlicz bump". To be more precise, they introduced the following $A_p$-type condition in the scale of Orlicz spaces:
\begin{equation*}
\big\|u\big\|_{L(\log L)^{p-1+\delta},Q}^{1/{p}}\left(\frac{1}{|Q|}\int_Q v(x)^{-p'/p}\,dx\right)^{1/{p'}}\leq C<\infty,\qquad \delta>0,
\end{equation*}
where $\big\|u\big\|_{L(\log L)^{p-1+\delta},Q}$ is the mean Luxemburg norm of $u$ on cube $Q$ with Young function $\mathcal A(t)=t\cdot(1+\log^+t)^{p-1+\delta}$. It was shown that inequality \eqref{T} still holds under the $A_p$-type condition on $(u,v)$, and this result is sharp since it does not hold in general when $\delta=0$.

On the other hand, the following Sharp function estimate for $T_{\theta}$ was established in \cite{liu}: there exists some $\delta$, $0<\delta<1$, and a positive constant $C=C_{\delta}$ such that for any $f\in C^\infty_0(\mathbb R^n)$ and $x\in\mathbb R^n$,
\begin{equation}\label{MJ}
\big[M^{\sharp}(|T_{\theta}f|^{\delta})(x)\big]^{1/{\delta}}\leq C\cdot M f(x),
\end{equation}
where $M$ is the standard Hardy--Littlewood maximal operator and $M^{\sharp}$ is the well-known Sharp maximal operator defined as
\begin{equation*}
M^{\sharp}f(x):=\sup_{x\in Q}\frac{1}{|Q|}\int_Q\big|f(y)-f_Q\big|\,dy.
\end{equation*}
Here the supremum is taken over all the cubes containing $x$ and $f_Q$ denotes the mean value of $f$ over $Q$, namely, $f_Q=\frac{1}{|Q|}\int_Q f(x)\,dx$. It was pointed out in \cite{cruz2} (Remark 1.3) that by using this Sharp function estimate \eqref{MJ}, we can also show inequality \eqref{T} is true for more general operator $T_{\theta}$, under the condition \eqref{assump1.1} on $(u,v)$. Then we obtain a sufficient condition for $T_{\theta}$ to be weak $(p,p)$ with $1<p<\infty$.
\begin{theorem}\label{WT}
Let $1<p<\infty$. Given a pair of weights $(u,v)$, suppose that for some $r>1$ and for all cubes $Q$ in $\mathbb R^n$,
\begin{equation*}
\left(\frac{1}{|Q|}\int_Q u(x)^r\,dx\right)^{1/{(rp)}}\left(\frac{1}{|Q|}\int_Q v(x)^{-p'/p}\,dx\right)^{1/{p'}}\leq C<\infty.
\end{equation*}
Then the $\theta$-type Calder\'on--Zygmund operator $T_{\theta}$ is bounded from $L^p_v(\mathbb R^n)$ into $WL^p_u(\mathbb R^n)$.
\end{theorem}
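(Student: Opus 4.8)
The plan is to reduce the two-weight weak $(p,p)$ inequality for $T_\theta$ to the corresponding known inequality for the Hardy--Littlewood maximal operator $M$, using the sharp function estimate \eqref{MJ} as the bridge. Since \eqref{MJ} gives $\big[M^\sharp(|T_\theta f|^\delta)(x)\big]^{1/\delta}\le C\,Mf(x)$ for some $0<\delta<1$, the operator $T_\theta$ is controlled, in the sharp-maximal sense, exactly the way a Calder\'on--Zygmund operator is; so any argument that derives \eqref{T} for $\mathcal T$ from this estimate applies verbatim to $T_\theta$. First I would recall (this is the content of \cite{cruz2}, Remark 1.3) that the two-weight weak $(p,p)$ bound is equivalent, up to the sharp function machinery, to the two-weight \emph{strong} $(p,p)$ bound for $M$ from $L^p_v$ into $L^p_u$, which in turn is guaranteed by the hypothesis: the power-bump condition
\begin{equation*}
\left(\frac{1}{|Q|}\int_Q u(x)^r\,dx\right)^{1/(rp)}\left(\frac{1}{|Q|}\int_Q v(x)^{-p'/p}\,dx\right)^{1/{p'}}\le C
\end{equation*}
is precisely the condition under which $M\colon L^p_v\to L^p_u$ is bounded (this is a classical result; with an $L^r$-bump on $u$ one also gets that the dyadic maximal operator and hence $M$ maps $L^p_v$ boundedly into $L^p_u$).

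The key steps, in order, would be: (1) By a standard approximation argument it suffices to prove the estimate for $f\in C_0^\infty(\mathbb R^n)$, and we may assume $\|f\|_{L^p_v}<\infty$; one also first establishes the bound with $T_\theta f$ replaced by a truncation so that all quantities are finite, then removes the truncation at the end. (2) Fix $\lambda>0$ and perform a Calder\'on--Zygmund decomposition of $\mathbb R^n$ at height related to $\lambda$ adapted to $Mf$, or alternatively apply the good-$\lambda$ inequality relating $T_\theta f$ and $Mf$ through $M^\sharp$. The cleanest route is the good-$\lambda$ inequality: using \eqref{MJ}, for $0<\delta<1$ one has a distributional inequality of the form
\begin{equation*}
u\big(\{x:M_\delta(T_\theta f)(x)>2\lambda,\ Mf(x)\le\gamma\lambda\}\big)\le C\gamma^{\delta'}\,u\big(\{x:M_\delta(T_\theta f)(x)>\lambda\}\big)
\end{equation*}
for suitable exponents, where $M_\delta g=(M(|g|^\delta))^{1/\delta}$; here one uses that $u\in A_\infty$ is \emph{not} needed because the $L^r$-bump on $u$ supplies the required control on $u$-measure of level sets. (3) Sum the good-$\lambda$ inequality (choosing $\gamma$ small) to obtain
\begin{equation*}
u\big(\{x:|T_\theta f(x)|>\lambda\}\big)\le C\,u\big(\{x:Mf(x)>\gamma\lambda\}\big).
\end{equation*}
(4) Finally invoke the two-weight strong $(p,p)$ bound for $M$ under the power-bump hypothesis:
\begin{equation*}
u\big(\{x:Mf(x)>\gamma\lambda\}\big)\le\frac{C}{(\gamma\lambda)^p}\int_{\mathbb R^n}(Mf(x))^p u(x)\,dx\cdot\frac{1}{C}\le\frac{C}{\lambda^p}\int_{\mathbb R^n}|f(x)|^p v(x)\,dx,
\end{equation*}
which yields \eqref{T} for $T_\theta$ and completes the proof.

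The main obstacle is step (2)--(4): one must be careful that the good-$\lambda$ inequality can be run \emph{without} assuming $u\in A_\infty$, since in the genuinely two-weight setting $u$ and $v$ are unrelated and $u$ need not be doubling. The point, which goes back to P\'erez and is the substance of \cite{cruz2}, is that the $L^r$-bump on $u$ in the hypothesis lets one replace the usual $A_\infty$-type self-improvement of $u$ by an application of H\"older's inequality with exponent $r$ on each Calder\'on--Zygmund cube, at the cost of passing from the $A_p$-bump to the $L^r$-bump; combined with the $L^r\to L^{r}$-type boundedness of $M$ this closes the argument. The sharp function estimate \eqref{MJ} is exactly what makes the Calder\'on--Zygmund cube estimate for $T_\theta$ identical to the one for $\mathcal T$, so once the scalar inequality \eqref{MJ} is in hand, no further regularity of the kernel beyond $\theta$-type is needed. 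The remaining details — the approximation/truncation in step (1) and the bookkeeping of the exponents in the good-$\lambda$ summation — are routine and can be imported wholesale from \cite{cruz1,cruz2}.
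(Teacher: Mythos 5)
Your overall strategy --- use the sharp function estimate \eqref{MJ} to reduce $T_\theta$ to the case treated by Cruz-Uribe and P\'erez and then import their two-weight argument --- coincides with the paper, which in fact offers no more detail than the citation of \cite{cruz2}, Remark 1.3, together with \eqref{MJ} from \cite{liu}. However, your account of how the imported argument runs breaks down at the decisive point. In step (4) you assert that the hypothesis \eqref{assump1.1} ``is precisely the condition under which $M\colon L^p_v\to L^p_u$ is bounded.'' This is not so: the bump in \eqref{assump1.1} sits on $u$, whereas the known sufficient bump conditions for the two-weight \emph{strong} $(p,p)$ boundedness of $M$ (P\'erez) require a bump on the dual weight $v^{-p'/p}$, i.e. a condition of the form $\sup_Q\big(\frac{1}{|Q|}\int_Q u\,dx\big)^{1/p}\big\|v^{-1/p}\big\|_{\mathcal B,Q}<\infty$ for a suitable Young function $\mathcal B$. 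A bump on $u$ alone gives, via H\"older's inequality, only the plain two-weight $A_p$ condition \eqref{two} for the pair $(u,v)$, which characterizes the \emph{weak} type of $M$ but not the strong type; so the strong bound you invoke in step (4) is unjustified and may fail. Likewise, the good-$\lambda$ inequality in step (2) taken with respect to the measure $u\,dx$ is a genuine obstruction when $u\notin A_\infty$, and the remark that the $L^r$-bump ``supplies the required control'' is not an argument.

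The way the argument is actually closed in \cite{cruz1,cruz2} is different: one uses a Fefferman--Stein type (weak) inequality valid for \emph{arbitrary} weights, in which the weight appearing on the right-hand side is a maximal majorant of $u$ (such as $Mu$ or $\big(M(u^r)\big)^{1/r}$), so that no $A_\infty$ or doubling hypothesis on $u$ is needed; the $u$-side bump \eqref{assump1.1} is then used, through H\"older's inequality and the $L^r$-boundedness of $M$, to check that the modified pair still satisfies a two-weight $A_p$-type condition, and only the two-weight \emph{weak} type of $M$ is required to conclude \eqref{T}. The role of \eqref{MJ} is exactly as you say --- it makes this scheme apply verbatim to $T_\theta$ --- so if you defer all of the above to \cite{cruz2} (as the paper itself does), the reduction is legitimate; but as written, your steps (2)--(4) do not assemble into a proof.
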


We will extend Theorem \ref{WT} to the weighted amalgam spaces. In order to do so, we need to define weighted amalgam spaces with two weights.
\begin{defn}
Let $1\leq p\leq\alpha\leq q\leq\infty$, and let $u,v,\mu$ be three weights on $\mathbb R^n$. We denote by $(L^p,L^q)^{\alpha}(v,u;\mu)$ the weighted amalgam space with two weights, the space of all locally integrable functions $f$ with finite norm
\begin{equation*}
\begin{split}
\big\|f\big\|_{(L^p,L^q)^{\alpha}(v,u;\mu)}
:=&\sup_{\ell>0}\left\{\int_{\mathbb R^n}\Big[u(Q(y,\ell))^{1/{\alpha}-1/p-1/q}
\big\|f\cdot\chi_{Q(y,\ell)}\big\|_{L^p_v}\Big]^q\mu(y)\,dy\right\}^{1/q}\\
=&\sup_{\ell>0}\Big\|u(Q(y,\ell))^{1/{\alpha}-1/p-1/q}
\big\|f\cdot\chi_{Q(y,\ell)}\big\|_{L^p_v}\Big\|_{L^q_{\mu}}<\infty,
\end{split}
\end{equation*}
with the usual modification when $q=\infty$. Alternatively, we could define the above notions of this section and section 2 with balls instead of cubes. We can also see that the space $(L^p,L^q)^{\alpha}(v,u;\mu)$ equipped with the norm $\big\|\cdot\big\|_{(L^p,L^q)^{\alpha}(v,u;\mu)}$ is a Banach function space.
\end{defn}
Note that
\begin{itemize}
  \item If $u=v=w$, then $(L^p,L^q)^{\alpha}(v,u;\mu)$ is the space $(L^p,L^q)^{\alpha}(w;\mu)$ in Definition \ref{amalgam};
  \item  If $1\leq p<\alpha$ and $q=\infty$, then $(L^p,L^q)^{\alpha}(v,u;\mu)$ is just the weighted Morrey space with two weights $\mathcal L^{p,\kappa}(v,u)$ defined by (with $\kappa=1-p/{\alpha}$, see \cite{komori})
\begin{equation*}
\begin{split}
&\mathcal L^{p,\kappa}(v,u)\\
:=&\left\{f :\big\|f\big\|_{\mathcal L^{p,\kappa}(v,u)}
=\sup_{y\in\mathbb R^n,\ell>0}\left(\frac{1}{u(Q(y,\ell))^{\kappa}}\int_{Q(y,\ell)}|f(x)|^pv(x)\,dx\right)^{1/p}<\infty\right\}.
\end{split}
\end{equation*}
\end{itemize}

We are now ready to prove the following result.

\begin{theorem}\label{mainthm:5}
Let $1<p\leq\alpha<q\leq\infty$ and $\mu\in\Delta_2$. Given a pair of weights $(u,v)$, suppose that for some $r>1$ and for all cubes $Q$ in $\mathbb R^n$,
\begin{equation*}
\left(\frac{1}{|Q|}\int_Q u(x)^r\,dx\right)^{1/{(rp)}}\left(\frac{1}{|Q|}\int_Q v(x)^{-p'/p}\,dx\right)^{1/{p'}}\leq C<\infty.
\end{equation*}
If $u\in \Delta_2$, then the $\theta$-type Calder\'on--Zygmund operator $T_{\theta}$ is bounded from $(L^p,L^q)^{\alpha}(v,u;\mu)$ into $(WL^p,L^q)^{\alpha}(u;\mu)$.
\end{theorem}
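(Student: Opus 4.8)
The plan is to run the same splitting scheme as in the proofs of Theorems \ref{mainthm:1}--\ref{mainthm:4}, now with cubes in place of balls and with the \emph{unweighted} average of $f$ on a dilate replaced, via the power-bump hypothesis, by its $v$-weighted $L^p$ norm. Fix a cube $Q=Q(y,\ell)$, set $2Q=Q(y,2\ell)$, and decompose $f=f_1+f_2$ with $f_1=f\cdot\chi_{2Q}$ and $f_2=f\cdot\chi_{(2Q)^c}$. Since $\{x:|T_\theta f(x)|>\lambda\}\subset\{x:|T_\theta f_1(x)|>\lambda/2\}\cup\{x:|T_\theta f_2(x)|>\lambda/2\}$, the quantity $u(Q)^{1/\alpha-1/p-1/q}\|T_\theta f\cdot\chi_Q\|_{WL^p_u}$ is dominated by the sum of
\[
K_1(y,\ell):=2\,u(Q)^{1/\alpha-1/p-1/q}\big\|T_\theta f_1\cdot\chi_Q\big\|_{WL^p_u}
\quad\text{and}\quad
K_2(y,\ell):=2\,u(Q)^{1/\alpha-1/p-1/q}\big\|T_\theta f_2\cdot\chi_Q\big\|_{WL^p_u}.
\]
For $K_1$ I would invoke Theorem \ref{WT}, which (by the given power-bump condition on $(u,v)$) yields $\|T_\theta f_1\|_{WL^p_u}\le C\|f\cdot\chi_{2Q}\|_{L^p_v}$; since $1/\alpha-1/p-1/q\le0$ and $u\in\Delta_2$, the cube analogue of \eqref{doubling1} gives $u(Q)^{1/\alpha-1/p-1/q}\le C\,u(2Q)^{1/\alpha-1/p-1/q}$, so $K_1(y,\ell)\le C\,u(2Q)^{1/\alpha-1/p-1/q}\|f\cdot\chi_{2Q}\|_{L^p_v}$.

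For $K_2$ I would reuse the cube version of the pointwise estimate \eqref{pointwise1}: for every $x\in Q$ one has $|T_\theta f_2(x)|\le C\sum_{j=1}^\infty|2^{j+1}Q|^{-1}\int_{2^{j+1}Q}|f(z)|\,dz$, a bound independent of $x\in Q$; hence $\|T_\theta f_2\cdot\chi_Q\|_{WL^p_u}\le\|T_\theta f_2\|_{L^\infty(Q)}\,u(Q)^{1/p}$, and so
\[
K_2(y,\ell)\le C\,u(Q)^{1/\alpha-1/q}\sum_{j=1}^\infty\frac1{|2^{j+1}Q|}\int_{2^{j+1}Q}|f(z)|\,dz.
\]
The power-bump hypothesis enters in controlling each average. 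By H\"older's inequality with exponents $p$ and $p'$, $\frac1{|2^{j+1}Q|}\int_{2^{j+1}Q}|f|\le\frac1{|2^{j+1}Q|}\|f\cdot\chi_{2^{j+1}Q}\|_{L^p_v}\big(\int_{2^{j+1}Q}v^{-p'/p}\big)^{1/p'}$, and, using Jensen's inequality in the form $\big(|2^{j+1}Q|^{-1}\int_{2^{j+1}Q}u^r\big)^{1/r}\ge u(2^{j+1}Q)/|2^{j+1}Q|$ together with the hypothesis, $\big(\int_{2^{j+1}Q}v^{-p'/p}\big)^{1/p'}\le C\,|2^{j+1}Q|\,u(2^{j+1}Q)^{-1/p}$. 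Combining these, $\frac1{|2^{j+1}Q|}\int_{2^{j+1}Q}|f|\le C\,u(2^{j+1}Q)^{-1/p}\|f\cdot\chi_{2^{j+1}Q}\|_{L^p_v}$, which gives
\[
K_2(y,\ell)\le C\sum_{j=1}^\infty u(2^{j+1}Q)^{1/\alpha-1/p-1/q}\big\|f\cdot\chi_{2^{j+1}Q}\big\|_{L^p_v}\cdot\frac{u(Q)^{1/\alpha-1/q}}{u(2^{j+1}Q)^{1/\alpha-1/q}}.
\]

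To conclude, I would take the $L^q_\mu$-norm in $y$ of $K_1(y,\ell)+K_2(y,\ell)$, apply Minkowski's inequality, and sum over $j$. For $K_1$ the $L^q_\mu$-norm of $u(Q(y,2\ell))^{1/\alpha-1/p-1/q}\|f\cdot\chi_{Q(y,2\ell)}\|_{L^p_v}$ is at most $\|f\|_{(L^p,L^q)^\alpha(v,u;\mu)}$ by the definition of that norm (with $2\ell$ as the scale); likewise, for each $j$ the $L^q_\mu$-norm of $u(Q(y,2^{j+1}\ell))^{1/\alpha-1/p-1/q}\|f\cdot\chi_{Q(y,2^{j+1}\ell)}\|_{L^p_v}$ is at most $\|f\|_{(L^p,L^q)^\alpha(v,u;\mu)}$, while the dilation factor $u(Q(y,\ell))^{1/\alpha-1/q}/u(Q(y,2^{j+1}\ell))^{1/\alpha-1/q}$ should be pulled out of the norm, provided it is bounded --- uniformly in $y$ and $\ell$ --- by a summable sequence in $j$. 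This last point is the crux, and the place where the present proof departs from those of Theorems \ref{mainthm:1}--\ref{mainthm:4}: we only assume $u\in\Delta_2$, not $u\in A_\infty$, so \eqref{compare} is not available. Instead I would use that a doubling weight on $\mathbb R^n$ is automatically reverse-doubling: inside the annular region $2^{j+1}Q\setminus Q$ one can fit a cube of side length comparable to that of $Q$, and applying \eqref{weights} a bounded number of times shows that there exist constants $c>0$ and $\eta>0$, depending only on $n$ and the doubling constant of $u$, such that $u(2^{j+1}Q)\ge c\,(1+\eta)^{j}\,u(Q)$ for every cube $Q$ and every $j\ge1$. Since the hypothesis $\alpha<q$ forces $1/\alpha-1/q>0$, the series $\sum_{j\ge1}(1+\eta)^{-j(1/\alpha-1/q)}$ converges; putting this together with the bounds for $K_1$ and $K_2$ gives $\|K_1(\cdot,\ell)+K_2(\cdot,\ell)\|_{L^q_\mu}\le C\|f\|_{(L^p,L^q)^\alpha(v,u;\mu)}$ with $C$ independent of $\ell$, and taking the supremum over $\ell>0$ finishes the argument. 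Thus the only real obstacle is the tail term: establishing the reverse-doubling estimate for $u\in\Delta_2$ and confirming that the power-bump condition is used correctly to produce the decaying factor $u(2^{j+1}Q)^{-1/p}$.
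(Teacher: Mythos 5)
Your proposal is correct and follows essentially the same route as the paper: the same $f=f_1+f_2$ splitting, Theorem \ref{WT} plus doubling of $u$ for the local part, the cube version of \eqref{pointwise1} with H\"older and the power-bump condition (your Jensen step is the same as the paper's use of \eqref{U}) to produce the factor $u(2^{j+1}Q)^{-1/p}$, and the reverse-doubling property of the doubling weight $u$ (which the paper quotes from Lemma 4.1 of \cite{komori}, and you sketch directly) to sum the geometric tail since $1/\alpha-1/q>0$. No gaps.
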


\begin{proof}[Proof of Theorem $\ref{mainthm:5}$]
Let $1<p\leq\alpha<q\leq\infty$ and $f\in(L^p,L^q)^{\alpha}(v,u;\mu)$ with $u\in\Delta_2$ and $\mu\in\Delta_2$. For any cube $Q=Q(y,\ell)\subset\mathbb R^n$ with $y\in\mathbb R^n$ and $\lambda>0$, we will denote by $\lambda Q$ the cube concentric with $Q$ whose each edge is $\lambda$ times as long, that is, $\lambda Q=Q(y,\lambda\ell)$. Let
\begin{equation*}
f=f\cdot\chi_{2Q}+f\cdot\chi_{(2Q)^c}:=f_1+f_2,
\end{equation*}
where $\chi_{2Q}$ denotes the characteristic function of $2Q=Q(y,2\ell)$. Then for given $y\in\mathbb R^n$ and $\ell>0$, we write
\begin{align}\label{K}
&u(Q(y,\ell))^{1/{\alpha}-1/p-1/q}\big\|T_{\theta}(f)\cdot\chi_{Q(y,\ell)}\big\|_{WL^p_u}\notag\\
&\leq 2\cdot u(Q(y,\ell))^{1/{\alpha}-1/p-1/q}\big\|T_{\theta}(f_1)\cdot\chi_{Q(y,\ell)}\big\|_{WL^p_u}\notag\\
&+2\cdot u(Q(y,\ell))^{1/{\alpha}-1/p-1/q}\big\|T_{\theta}(f_2)\cdot\chi_{Q(y,\ell)}\big\|_{WL^p_u}\notag\\
&:=K_1(y,\ell)+K_2(y,\ell).
\end{align}
In view of Theorem \ref{WT}, we get
\begin{align}\label{K1}
K_1(y,\ell)&\leq 2\cdot u(Q(y,\ell))^{1/{\alpha}-1/p-1/q}\big\|T_\theta(f_1)\big\|_{WL^p_u}\notag\\
&\leq C\cdot u(Q(y,\ell))^{1/{\alpha}-1/p-1/q}
\bigg(\int_{Q(y,2\ell)}|f(x)|^pv(x)\,dx\bigg)^{1/p}\notag\\
&=C\cdot u(Q(y,2\ell))^{1/{\alpha}-1/p-1/q}\big\|f\cdot\chi_{Q(y,2\ell)}\big\|_{L^p_v}\notag\\
&\times \frac{u(Q(y,\ell))^{1/{\alpha}-1/p-1/q}}{u(Q(y,2\ell))^{1/{\alpha}-1/p-1/q}}.
\end{align}
Moreover, since $1/{\alpha}-1/p-1/q<0$ and $u\in \Delta_2$, then by the inequality \eqref{weights}(consider cube $Q$ instead of ball $B$), we obtain
\begin{equation}\label{doubling3}
\frac{u(Q(y,\ell))^{1/{\alpha}-1/p-1/q}}{u(Q(y,2\ell))^{1/{\alpha}-1/p-1/q}}\leq C.
\end{equation}
Substituting the above inequality \eqref{doubling3} into \eqref{K1}, we thus obtain
\begin{equation}\label{k1yr}
K_1(y,\ell)\leq C\cdot u(Q(y,2\ell))^{1/{\alpha}-1/p-1/q}\big\|f\cdot\chi_{Q(y,2\ell)}\big\|_{L^p_v}.
\end{equation}
As for the term $K_2(y,\ell)$, using the same methods and steps as we deal with $I_2(y,r)$ in Theorem \ref{mainthm:1}, we can also show that for any $x\in Q(y,\ell)$,
\begin{equation}\label{pointwise3}
\big|T_{\theta}(f_2)(x)\big|\leq C
\sum_{j=1}^\infty\frac{1}{|Q(y,2^{j+1}\ell)|}\int_{Q(y,2^{j+1}\ell)}|f(z)|\,dz.
\end{equation}
This pointwise estimate \eqref{pointwise3} together with Chebyshev's inequality yields
\begin{equation*}
\begin{split}
K_2(y,\ell)&\leq 2\cdot u(Q(y,\ell))^{1/{\alpha}-1/p-1/q}\left(\int_{Q(y,\ell)}\big|T_{\theta}(f_2)(x)\big|^pu(x)\,dx\right)^{1/p}\\
&\leq C\cdot u(Q(y,\ell))^{1/{\alpha}-1/q}
\sum_{j=1}^\infty\frac{1}{|Q(y,2^{j+1}\ell)|}\int_{Q(y,2^{j+1}\ell)}|f(z)|\,dz.
\end{split}
\end{equation*}
Moreover, an application of H\"older's inequality gives us that
\begin{equation*}
\begin{split}
K_2(y,\ell)&\leq C\cdot u(Q(y,\ell))^{1/{\alpha}-1/q}
\sum_{j=1}^\infty\frac{1}{|Q(y,2^{j+1}\ell)|}\left(\int_{Q(y,2^{j+1}\ell)}|f(z)|^pv(z)\,dz\right)^{1/p}\\
&\times\left(\int_{Q(y,2^{j+1}\ell)}v(z)^{-p'/p}\,dz\right)^{1/{p'}}\\
&=C\sum_{j=1}^\infty u(Q(y,2^{j+1}\ell))^{1/{\alpha}-1/p-1/q}\big\|f\cdot\chi_{Q(y,2^{j+1}\ell)}\big\|_{L^p_v}\\
&\times\frac{u(Q(y,\ell))^{1/{\alpha}-1/q}}{u(Q(y,2^{j+1}\ell))^{1/{\alpha}-1/q}}
\cdot\frac{u(Q(y,2^{j+1}\ell))^{1/p}}{|Q(y,2^{j+1}\ell)|}\left(\int_{Q(y,2^{j+1}\ell)}v(z)^{-p'/p}\,dz\right)^{1/{p'}}.
\end{split}
\end{equation*}
In addition, we apply H\"older's inequality with exponent $r>1$ to get
\begin{equation}\label{U}
u\big(Q(y,2^{j+1}\ell)\big)=\int_{Q(y,2^{j+1}\ell)}u(z)\,dz
\leq\big|Q(y,2^{j+1}\ell)\big|^{1/{r'}}\left(\int_{Q(y,2^{j+1}\ell)}u(z)^r\,dz\right)^{1/r}.
\end{equation}
Consequently,
\begin{equation}\label{k2yr}
\begin{split}
K_2(y,\ell)&\leq C\sum_{j=1}^\infty u(Q(y,2^{j+1}\ell))^{1/{\alpha}-1/p-1/q}\big\|f\cdot\chi_{Q(y,2^{j+1}\ell)}\big\|_{L^p_v}
\cdot\frac{u(Q(y,\ell))^{1/{\alpha}-1/q}}{u(Q(y,2^{j+1}\ell))^{1/{\alpha}-1/q}}\\
&\times\frac{|Q(y,2^{j+1}\ell)|^{1/{(r'p)}}}{|Q(y,2^{j+1}\ell)|}
\left(\int_{Q(y,2^{j+1}\ell)}u(z)^r\,dz\right)^{1/{(rp)}}
\left(\int_{Q(y,2^{j+1}\ell)}v(z)^{-p'/p}\,dz\right)^{1/{p'}}\\
&\leq C\sum_{j=1}^\infty u(Q(y,2^{j+1}\ell))^{1/{\alpha}-1/p-1/q}\big\|f\cdot\chi_{Q(y,2^{j+1}\ell)}\big\|_{L^p_v}
\cdot\frac{u(Q(y,\ell))^{1/{\alpha}-1/q}}{u(Q(y,2^{j+1}\ell))^{1/{\alpha}-1/q}}.
\end{split}
\end{equation}
The last inequality is obtained by the $A_p$-type condition \eqref{assump1.1} on $(u,v)$. Furthermore, by our additional hypothesis on $u:u\in \Delta_2$, we can easily check that there exists a reverse doubling constant $D=D(u)>1$ independent of $Q$ such that (see Lemma 4.1 in \cite{komori})
\begin{equation*}
u(2Q)\geq D\cdot u(Q), \quad \mbox{for any cube }\,Q\subset\mathbb R^n,
\end{equation*}
which implies that for any $j\in\mathbb Z^+$, $u(2^{j+1}Q)\geq D^{j+1}\cdot u(Q)$ by iteration. Hence,
\begin{align}\label{5}
\sum_{j=1}^\infty\frac{u(Q(y,\ell))^{1/{\alpha}-1/q}}{u(Q(y,2^{j+1}\ell))^{1/{\alpha}-1/q}}
&\leq \sum_{j=1}^\infty\left(\frac{u(Q(y,\ell))}{D^{j+1}\cdot u(Q(y,\ell))}\right)^{1/{\alpha}-1/q}\notag\\
&=\sum_{j=1}^\infty\left(\frac{1}{D^{j+1}}\right)^{1/{\alpha}-1/q}\notag\\
&\leq C,
\end{align}
where the last series is convergent since the reverse doubling constant $D>1$ and $1/{\alpha}-1/q>0$. Therefore by taking the $L^q_{\mu}$-norm of both sides of \eqref{K}(with respect to the variable $y$), and then using Minkowski's inequality, \eqref{k1yr}, \eqref{k2yr} and \eqref{5}, we have
\begin{equation*}
\begin{split}
&\Big\|u(Q(y,\ell))^{1/{\alpha}-1/p-1/q}\big\|T_{\theta}(f)\cdot\chi_{Q(y,\ell)}\big\|_{WL^p_u}\Big\|_{L^q_{\mu}}\\
&\leq\big\|K_1(y,\ell)\big\|_{L^q_{\mu}}+\big\|K_2(y,\ell)\big\|_{L^q_{\mu}}\\
&\leq C\Big\|u(Q(y,2\ell))^{1/{\alpha}-1/p-1/q}\big\|f\cdot\chi_{Q(y,2\ell)}\big\|_{L^p_v}\Big\|_{L^q_{\mu}}\\
&+C\sum_{j=1}^\infty\Big\| u(Q(y,2^{j+1}\ell))^{1/{\alpha}-1/p-1/q}\big\|f\cdot\chi_{Q(y,2^{j+1}\ell)}\big\|_{L^p_v}\Big\|_{L^q_{\mu}}
\times\frac{u(Q(y,\ell))^{1/{\alpha}-1/q}}{u(Q(y,2^{j+1}\ell))^{1/{\alpha}-1/q}}\\
\end{split}
\end{equation*}
\begin{equation*}
\begin{split}
&\leq C\big\|f\big\|_{(L^p,L^q)^{\alpha}(v,u;\mu)}+C\big\|f\big\|_{(L^p,L^q)^{\alpha}(v,u;\mu)}
\times\sum_{j=1}^\infty\frac{u(Q(y,\ell))^{1/{\alpha}-1/q}}{u(Q(y,2^{j+1}\ell))^{1/{\alpha}-1/q}}\\
&\leq C\big\|f\big\|_{(L^p,L^q)^{\alpha}(v,u;\mu)}.
\end{split}
\end{equation*}
Finally, by taking the supremum over all $\ell>0$, we finish the proof of Theorem \ref{mainthm:5}.
\end{proof}

Let $M$ denote the Hardy--Littlewood maximal operator and $M^{\sharp}$ denote the Sharp maximal operator. For $\delta>0$, we define
\begin{equation*}
M_{\delta}(f):=\big[M(|f|^{\delta})\big]^{1/{\delta}},\qquad  M^{\sharp}_{\delta}(f):=\big[M^{\sharp}(|f|^{\delta})\big]^{1/{\delta}}.
\end{equation*}
The maximal function associated to $\mathcal A(t)=t\cdot(1+\log^+t)$ is defined as
\begin{equation*}
M_{L\log L}f(x):=\sup_{x\in Q}\big\|f\big\|_{L\log L,Q},
\end{equation*}
where the supremum is taken over all the cubes containing $x$. Let $b\in BMO(\mathbb R^n)$ and $[b,T_{\theta}]$ be the commutator of the $\theta$-type Calder\'on--Zygmund operator. In \cite{liu}, it was proved that if $\theta$ satisfies condition $(\ref{theta2})$, then for $0<\delta<\varepsilon<1$, there exists a positive constant $C=C_{\delta,\varepsilon}$ such that for any $f\in C^\infty_0(\mathbb R^n)$ and $x\in\mathbb R^n$,
\begin{equation}\label{MJ2}
M^{\sharp}_\delta([b,T_{\theta}]f)(x)\leq C\|b\|_*\Big(M_{\varepsilon}(T_{\theta}f)(x)+M_{L\log L}f(x)\Big).
\end{equation}
Using this Sharp function estimate \eqref{MJ2} and following the basic idea in \cite{cruz2}, we can also establish the two-weight, weak-type norm inequality for $[b,T_{\theta}]$.
\begin{theorem}\label{WT2}
Let $1<p<\infty$ and $b\in BMO(\mathbb R^n)$. Given a pair of weights $(u,v)$, suppose that for some $r>1$ and for all cubes $Q$ in $\mathbb R^n$,
\begin{equation*}
\left(\frac{1}{|Q|}\int_Q u(x)^r\,dx\right)^{1/{(rp)}}\big\|v^{-1/p}\big\|_{\mathcal A,Q}\leq C<\infty,
\end{equation*}
where $\mathcal A(t)=t^{p'}(1+\log^+t)^{p'}$ is a Young function. If $\theta$ satisfies $(\ref{theta2})$, then the commutator operator $[b,T_{\theta}]$ is bounded from $L^p_v(\mathbb R^n)$ into $WL^p_u(\mathbb R^n)$.
\end{theorem}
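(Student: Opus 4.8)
The plan is to follow the strategy of Cruz-Uribe and P\'erez in \cite{cruz2}: reduce the weak-type bound for $[b,T_\theta]$ to two-weight weak-type bounds for maximal operators by means of the Sharp function estimate \eqref{MJ2}, and then prove the latter by a Calder\'on--Zygmund decomposition, exactly as in the proof of Theorem \ref{WT}. First I would make the standard reductions. Since $[b,T_\theta]$ is linear in $f$, it suffices to show, for every $\lambda>0$,
\[
u\big(\big\{x\in\mathbb R^n:\big|[b,T_\theta]f(x)\big|>\lambda\big\}\big)\le\frac{C}{\lambda^{p}}\int_{\mathbb R^n}|f(x)|^{p}v(x)\,dx,
\]
and by homogeneity I may take $\lambda=1$ and $\|b\|_*=1$. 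By density it is enough to treat $f\in C^\infty_0(\mathbb R^n)$, so that \eqref{MJ} and \eqref{MJ2} are available; and replacing $u$ by the truncated weights $u_N=\min(u,N)\chi_{B(0,N)}$ — which still satisfy the hypothesis with the same constant because $u_N\le u$ — and letting $N\to\infty$ at the end by monotone convergence, I may assume $u$ is bounded with compact support, so that $Mu$, $M(Mu)$, and all the weak norms occurring below are finite.

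The next step is the Sharp maximal function machinery. Choose $0<\delta<\varepsilon<1$ with $\varepsilon$ small enough that \eqref{MJ} holds with exponent $\varepsilon$ (the estimate \eqref{MJ} self-improves to every smaller exponent). Since $M_\delta g\ge|g|$ a.e., we have $u(\{|[b,T_\theta]f|>1\})\le\big\|M_\delta([b,T_\theta]f)\big\|_{WL^p_u}^{p}$. Applying the weak-type Fefferman--Stein inequality $\|M_\delta g\|_{WL^p_u}\le C\|M^\sharp_\delta g\|_{WL^p_{Mu}}$, then \eqref{MJ2} together with $\|b\|_*=1$, and then that inequality once more in combination with $M^\sharp_\varepsilon(T_\theta f)\le C\,Mf$ from \eqref{MJ}, the problem is reduced to the two two-weight estimates
\[
\big\|Mf\big\|_{WL^p_{M(Mu)}}\le C\big\|f\big\|_{L^{p}_{v}},\qquad
\big\|M_{L\log L}f\big\|_{WL^p_{Mu}}\le C\big\|f\big\|_{L^{p}_{v}}.
\]

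Both estimates would be proved by a Calder\'on--Zygmund decomposition, as in Theorem \ref{WT}. For the first: at height $t>0$ decompose $\{Mf>ct\}\subset\bigcup_j 3Q_j$ with $Q_j$ pairwise disjoint dyadic cubes satisfying $t<|Q_j|^{-1}\int_{Q_j}|f|\le2^{n}t$; bound $t^{p}\,M(Mu)(\{Mf>ct\})$ by $C\sum_j t^{p}\,M(Mu)(3Q_j)$; use the power bump on $u$ to obtain $|Q|^{-1}\int_{Q}M(Mu)\le C\big(|c'Q|^{-1}\int_{c'Q}u^{r}\big)^{1/r}$ for all cubes $Q$ — via the localization $Mg\lesssim M(g\chi_{3Q})+\inf_{Q}Mg$, the $L^{r}$-boundedness of $M$, and Kolmogorov's inequality for the nonlocal part; then combine with $t^{p}\le\big(|Q_j|^{-1}\int_{Q_j}|f|\big)^{p}$, H\"older's inequality with exponents $(p,p')$, the bump hypothesis together with $\|v^{-1/p}\|_{L^{p'},Q_j}\le\|v^{-1/p}\|_{\mathcal A,Q_j}$, and disjointness of the $Q_j$ to get $\le C\int_{\mathbb R^n}|f|^{p}v$. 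For the second estimate one repeats this with $\{M_{L\log L}f>ct\}\subset\bigcup_j 3Q_j$ and $\|f\|_{L\log L,Q_j}\approx t$; here the role of the Young function is that $\mathcal A(s)=s^{p'}(1+\log^{+}s)^{p'}$ is precisely the one making
\[
\big\|f\big\|_{L\log L,Q}\le C\Big(\tfrac1{|Q|}\int_{Q}|f|^{p}v\Big)^{1/p}\big\|v^{-1/p}\big\|_{\mathcal A,Q}
\]
valid, by a generalized H\"older inequality in the scale of Orlicz spaces (cf. \eqref{holder}) applied to $f=(fv^{1/p})\cdot v^{-1/p}$; the sum then collapses to $C\int_{\mathbb R^n}|f|^{p}v$ as before.

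The step I expect to be the main obstacle is the uniform control of the iterated maximal functions: establishing $|Q|^{-1}\int_{Q}M^{k}u\le C\big(|c'Q|^{-1}\int_{c'Q}u^{r}\big)^{1/r}$ for every cube $Q$, with $C$ depending only on $n,p,r$ and the bump constant and \emph{not} on $u$, and arranging that the nonlocal tails of the maximal operator do not spoil the summation over the Calder\'on--Zygmund cubes. This is the same difficulty that occurs in the proof of Theorem \ref{WT}, now iterated one further time, because the commutator estimate \eqref{MJ2} forces a second application of the Fefferman--Stein inequality and hence the weight $M(Mu)$; the power bump on $u$ in the hypothesis is exactly what supplies the reverse-H\"older room needed to carry this out. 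The only other delicate point, the a priori finiteness required to legitimately invoke the weak-type Fefferman--Stein inequality, has already been removed by the truncation of $u$ performed in the reduction step. All remaining ingredients — the choice of $\delta<\varepsilon$, the two applications of Fefferman--Stein, and the Orlicz H\"older inequality — are routine.
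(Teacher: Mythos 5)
Your overall frame (sharp maximal function plus the ideas of Cruz-Uribe--P\'erez, which is all the paper itself invokes for Theorem \ref{WT2}) is reasonable, but the way you carry out the reduction contains a genuine gap, and it is exactly at the step you yourself flag as the ``main obstacle''. The inequality you need, $\frac{1}{|Q|}\int_Q M(Mu)\,dx\le C\big(\frac{1}{|c'Q|}\int_{c'Q}u^r\,dx\big)^{1/r}$ for \emph{all} cubes $Q$ with $C$ independent of $u$, is false: in the localization $Mg\lesssim M(g\chi_{3Q})+\inf_Q Mg$ the term $\inf_Q Mg$ carries the mass of $u$ lying far from $Q$, and no amount of $L^r$-boundedness or Kolmogorov on the local piece removes it. Concretely, in dimension one take $u=\chi_{[L,L+1]}$ (plus a negligible positive background); then for $Q=[0,1]$ the left side is at least $c/L>0$ while the right side is essentially $0$. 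Worse, the two inequalities to which you reduce the theorem, namely $\|Mf\|_{WL^p_{M(Mu)}}\le C\|f\|_{L^p_v}$ and $\|M_{L\log L}f\|_{WL^p_{Mu}}\le C\|f\|_{L^p_v}$, are themselves not consequences of the hypothesis. Take $p=2$, $u_L=\chi_{[L,L+1]}$ (plus a tiny background), and $v_L$ with $v_L^{-1}=A_L\chi_{[0,1]}+\chi_{[0,1]^c}$ where $A_L\approx L^{1+1/r}(\log L)^{-2}$; one checks that the pair $(u_L,v_L)$ satisfies the bump condition \eqref{assump1.2} with constant independent of $L$ (the only nontrivial cubes are those meeting both $[0,1]$ and $[L,L+1]$, which have length $\gtrsim L$). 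Yet with $f=\chi_{[0,1]}$ one has $\|f\|_{L^2_{v_L}}^2=A_L^{-1}$, while $Mu_L\approx 1/L$ on $\{Mf>1/2\}\supset[0,1]$, so
\begin{equation*}
\sup_{\lambda>0}\lambda^2\,(Mu_L)\big(\{M_{L\log L}f>\lambda\}\big)\gtrsim \frac1L
\gg \frac{C}{A_L}=C\,\|f\|_{L^2_{v_L}}^2 ,
\end{equation*}
and the same failure occurs a fortiori with $M(Mu_L)\ge Mu_L$ on the left. Since the two-weight $A_p$ condition for the pair $(Mu,v)$ is \emph{necessary} for the weak $(p,p)$ inequality for $M$, no refinement of your Calder\'on--Zygmund argument can rescue these target inequalities: they are simply false under \eqref{assump1.2}.

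The structural reason is that applying the arbitrary-weight Fefferman--Stein step twice trades the weight $u$ for $Mu$ and then $M(Mu)$ \emph{globally}, and the bump hypothesis on $(u,v)$ does not control averages of $Mu$ or $M(Mu)$ over cubes far from the support of $u$; it only controls quantities in which $u$ itself appears through local averages (equivalently, through $\inf_Q\,(M(u^r))^{1/r}$, which is where the power bump and the Coifman--Rochberg $A_1$ property of $(M(u^r))^{1/r}$ actually enter in \cite{cruz1,cruz2}). So the proof has to keep $u$ on the left-hand side throughout and exploit the bump condition inside a localized good-$\lambda$/Calder\'on--Zygmund argument on the level sets, as Cruz-Uribe and P\'erez do, rather than reduce to global weak-type bounds for $M$ and $M_{L\log L}$ with the enlarged weights $Mu$ and $M(Mu)$. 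Your Orlicz ingredients (the factorization $\mathcal C^{-1}=\mathcal A^{-1}\cdot\mathcal B^{-1}$ giving $\|f\|_{L\log L,Q}\le C\big(\frac1{|Q|}\int_Q|f|^pv\big)^{1/p}\|v^{-1/p}\|_{\mathcal A,Q}$, and the use of \eqref{MJ}, \eqref{MJ2}) are correct and are indeed the right tools, but as organized the argument reduces the theorem to statements that the hypotheses do not imply.
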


We will extend Theorem \ref{WT2} to the weighted amalgam spaces.For this purpose, we need the following key lemma.
\begin{lemma}\label{three}
Given three Young functions $\mathcal A$, $\mathcal B$ and $\mathcal C$ such that for all $t>0$,
\begin{equation*}
\mathcal A^{-1}(t)\cdot\mathcal B^{-1}(t)\leq\mathcal C^{-1}(t),
\end{equation*}
where $\mathcal A^{-1}(t)$ is the inverse function of $\mathcal A(t)$. Then we have the following generalized H\"older's inequality due to O'Neil \cite{neil}: for any cube $Q\subset\mathbb R^n$ and all functions $f$ and $g$,
\begin{equation*}
\big\|f\cdot g\big\|_{\mathcal C,Q}\leq 2\big\|f\big\|_{\mathcal A,Q}\big\|g\big\|_{\mathcal B,Q}.
\end{equation*}
\end{lemma}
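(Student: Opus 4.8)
The plan is to reduce the Orlicz-norm inequality to a single pointwise Young-type inequality and then absorb the resulting constant by convexity. The crux is to establish
\[
\mathcal C(s\,t)\leq\mathcal A(s)+\mathcal B(t),\qquad\text{for all }s,t\geq0,
\]
which is precisely where the hypothesis $\mathcal A^{-1}(t)\cdot\mathcal B^{-1}(t)\leq\mathcal C^{-1}(t)$ is used. Since each Young function here is a continuous, strictly increasing bijection of $[0,+\infty)$ onto itself, the inverses $\mathcal A^{-1},\mathcal B^{-1},\mathcal C^{-1}$ are honest strictly increasing functions. Fix $s,t>0$ (the case $st=0$ being trivial), set $\lambda=\mathcal A(s)$ and $\mu=\mathcal B(t)$, and assume without loss of generality that $\lambda\geq\mu$. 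Then $s=\mathcal A^{-1}(\lambda)$ and $t=\mathcal B^{-1}(\mu)\leq\mathcal B^{-1}(\lambda)$ by monotonicity, so $st\leq\mathcal A^{-1}(\lambda)\,\mathcal B^{-1}(\lambda)\leq\mathcal C^{-1}(\lambda)$; applying the increasing function $\mathcal C$ gives $\mathcal C(st)\leq\lambda=\mathcal A(s)\leq\mathcal A(s)+\mathcal B(t)$, as claimed.

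Next I would normalize. Write $a=\big\|f\big\|_{\mathcal A,Q}$ and $b=\big\|g\big\|_{\mathcal B,Q}$; if $a=0$ or $b=0$ then $fg=0$ almost everywhere on $Q$ and there is nothing to prove, while if $a$ or $b$ is infinite the inequality is trivial, so assume $0<a,b<\infty$. From the definition of the Luxemburg norm, together with the monotone convergence theorem (letting the defining parameter decrease to $a$, respectively to $b$), we get
\[
\frac{1}{|Q|}\int_Q\mathcal A\!\left(\frac{|f(x)|}{a}\right)dx\leq1,\qquad
\frac{1}{|Q|}\int_Q\mathcal B\!\left(\frac{|g(x)|}{b}\right)dx\leq1.
\]
Applying the pointwise inequality above with $s=|f(x)|/a$ and $t=|g(x)|/b$, integrating over $Q$, and dividing by $|Q|$, yields
\[
\frac{1}{|Q|}\int_Q\mathcal C\!\left(\frac{|f(x)\,g(x)|}{ab}\right)dx\leq2.
\]

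Finally, I would invoke the convexity of $\mathcal C$ with $\mathcal C(0)=0$, which gives $\mathcal C(u/2)\leq\tfrac12\mathcal C(u)$ for every $u\geq0$, whence
\[
\frac{1}{|Q|}\int_Q\mathcal C\!\left(\frac{|f(x)\,g(x)|}{2ab}\right)dx
\leq\frac12\cdot\frac{1}{|Q|}\int_Q\mathcal C\!\left(\frac{|f(x)\,g(x)|}{ab}\right)dx\leq1,
\]
so that $2ab$ is admissible in the infimum defining $\big\|fg\big\|_{\mathcal C,Q}$, and therefore $\big\|fg\big\|_{\mathcal C,Q}\leq2ab=2\big\|f\big\|_{\mathcal A,Q}\big\|g\big\|_{\mathcal B,Q}$, which is the assertion. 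The only delicate point is the first step: one must keep track that the inverse functions are genuine monotone functions and handle the asymmetry $\lambda\neq\mu$ by the case reduction given; everything afterwards is routine, and the factor $2$ is merely an artifact of the convexity normalization. Alternatively, one may simply cite O'Neil \cite{neil}, where this generalized H\"older inequality is proved in greater generality.
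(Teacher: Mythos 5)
Your proof is correct, and it necessarily differs from the paper's treatment because the paper offers no argument at all for this lemma: it is stated as a known result and simply attributed to O'Neil. Your route is the standard self-contained one. The hypothesis $\mathcal A^{-1}(t)\cdot\mathcal B^{-1}(t)\leq\mathcal C^{-1}(t)$ yields the pointwise Young-type inequality $\mathcal C(st)\leq\mathcal A(s)+\mathcal B(t)$; your case split $\lambda\geq\mu$ versus $\mu\geq\lambda$ is legitimate since the hypothesis is symmetric in $\mathcal A$ and $\mathcal B$, and the paper's definition of a Young function (continuous, strictly increasing, vanishing at $0$, tending to $+\infty$) guarantees that the inverses are genuine increasing bijections of $[0,+\infty)$, so setting $\lambda=\mathcal A(s)$, $\mu=\mathcal B(t)$ is sound. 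The normalization step is also handled properly: the Luxemburg norm is attained (monotone convergence as the parameter decreases to $a$, resp.\ $b$), the degenerate cases $a=0$, $b=0$ or an infinite norm are disposed of correctly, and convexity of $\mathcal C$ with $\mathcal C(0)=0$ turns the integral bound $2$ into admissibility of $2ab$, giving exactly the constant $2$ in the statement. What your argument buys is a short, elementary, fully self-contained proof with the explicit constant matching the statement; what the paper's citation buys is O'Neil's more general framework (arbitrary measure spaces and more general Orlicz settings), none of which is needed for the application here.
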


\begin{theorem}\label{mainthm:6}
Let $1<p\leq\alpha<q\leq\infty$, $\mu\in\Delta_2$ and $b\in BMO(\mathbb R^n)$. Given a pair of weights $(u,v)$, suppose that for some $r>1$ and for all cubes $Q$ in $\mathbb R^n$,
\begin{equation}\label{assump1.2}
\left(\frac{1}{|Q|}\int_Q u(x)^r\,dx\right)^{1/{(rp)}}\big\|v^{-1/p}\big\|_{\mathcal A,Q}\leq C<\infty,
\end{equation}
where $\mathcal A(t)=t^{p'}(1+\log^+t)^{p'}$. If $\theta$ satisfies $(\ref{theta2})$ and $u\in A_\infty$, then the commutator operator $[b,T_{\theta}]$ is bounded from $(L^p,L^q)^{\alpha}(v,u;\mu)$ into $(WL^p,L^q)^{\alpha}(u;\mu)$.
\end{theorem}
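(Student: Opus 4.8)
\textbf{Proof proposal for Theorem \ref{mainthm:6}.}
The plan is to mimic the proof of Theorem \ref{mainthm:5}, using the decomposition into local and global parts, but replacing the single-weight tools by the two-weight machinery of Theorem \ref{WT2} and the O'Neil-type H\"older inequality of Lemma \ref{three}. Fix a cube $Q=Q(y,\ell)$, write $f=f_1+f_2$ with $f_1=f\cdot\chi_{2Q}$ and $2Q=Q(y,2\ell)$, and split
\begin{equation*}
u(Q(y,\ell))^{1/{\alpha}-1/p-1/q}\big\|[b,T_{\theta}](f)\cdot\chi_{Q(y,\ell)}\big\|_{WL^p_u}\leq K_1(y,\ell)+K_2(y,\ell),
\end{equation*}
where $K_1$ involves $[b,T_{\theta}](f_1)$ and $K_2$ involves $[b,T_{\theta}](f_2)$. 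For $K_1(y,\ell)$ I would invoke Theorem \ref{WT2} directly to get $\big\|[b,T_{\theta}](f_1)\big\|_{WL^p_u}\leq C\big(\int_{2Q}|f(x)|^pv(x)\,dx\big)^{1/p}$, then use the doubling condition $u\in\Delta_2$ (note $A_\infty\subset\Delta_2$) exactly as in \eqref{doubling3} to absorb the ratio $u(Q(y,\ell))^{1/\alpha-1/p-1/q}/u(Q(y,2\ell))^{1/\alpha-1/p-1/q}\leq C$, obtaining $K_1(y,\ell)\leq C\cdot u(Q(y,2\ell))^{1/{\alpha}-1/p-1/q}\big\|f\cdot\chi_{Q(y,2\ell)}\big\|_{L^p_v}$.

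For $K_2(y,\ell)$, I would first establish the pointwise bound for $x\in Q(y,\ell)$, using the decomposition $|[b,T_\theta](f_2)(x)|\leq|b(x)-b_{Q}|\cdot|T_\theta(f_2)(x)|+|T_\theta([b_{Q}-b]f_2)(x)|$ and the geometric series argument of \eqref{pointwise1}--\eqref{pointwise2} (now with cubes), to get something like
\begin{equation*}
\big|[b,T_\theta](f_2)(x)\big|\leq C\sum_{j=1}^\infty\frac{1}{|Q(y,2^{j+1}\ell)|}\int_{Q(y,2^{j+1}\ell)}\big(|b(x)-b_Q|+|b(z)-b_{Q_{j+1}}|+|b_{Q_{j+1}}-b_Q|\big)|f(z)|\,dz,
\end{equation*}
writing $Q_{j+1}=Q(y,2^{j+1}\ell)$. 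Then Chebyshev's inequality in $WL^p_u$ turns $K_2(y,\ell)$ into $u(Q)^{1/\alpha-1/q}$ times a sum of three averages over $Q_{j+1}$; the term $|b(x)-b_Q|$ contributes $\frac{1}{u(Q)}\int_Q|b(x)-b_Q|u(x)\,dx\leq C\|b\|_*$ by $u\in A_\infty$, and the factor $(j+1)$ appears from $|b_{Q_{j+1}}-b_Q|$ via Lemma \ref{BMO}(i). The crucial estimate is to bound $\frac{1}{|Q_{j+1}|}\int_{Q_{j+1}}|b(z)-b_{Q_{j+1}}|\,|f(z)|\,dz$: I would write $|f(z)|=|f(z)|v(z)^{1/p}\cdot v(z)^{-1/p}$, apply O'Neil's inequality (Lemma \ref{three}) with the triple $\bar{\mathcal A}(t)\approx e^t-1$ (for the BMO factor, using \eqref{Jensen}-type bounds), $L^p$ (for $|f|v^{1/p}$), and a Young function absorbing $v^{-1/p}$, so that the bump condition \eqref{assump1.2} is exactly what controls the product after also inserting H\"older with exponent $r$ on $u$ as in \eqref{U}. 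This yields
\begin{equation*}
K_2(y,\ell)\leq C\|b\|_*\sum_{j=1}^\infty(j+1)\cdot u(Q_{j+1})^{1/{\alpha}-1/p-1/q}\big\|f\cdot\chi_{Q_{j+1}}\big\|_{L^p_v}\cdot\frac{u(Q)^{1/{\alpha}-1/q}}{u(Q_{j+1})^{1/{\alpha}-1/q}}.
\end{equation*}

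Finally I would take the $L^q_\mu$-norm in $y$, apply Minkowski's inequality, recognize each $\big\|u(Q_{j+1})^{1/\alpha-1/p-1/q}\|f\chi_{Q_{j+1}}\|_{L^p_v}\big\|_{L^q_\mu}\leq\|f\|_{(L^p,L^q)^{\alpha}(v,u;\mu)}$, and pull out the convergent series $\sum_j(j+1)\,u(Q)^{1/\alpha-1/q}/u(Q_{j+1})^{1/\alpha-1/q}\leq C$. Here convergence follows from the reverse doubling of $u$: since $u\in A_\infty\subset\Delta_2$, Lemma 4.1 in \cite{komori} gives a constant $D=D(u)>1$ with $u(2^{j+1}Q)\geq D^{j+1}u(Q)$, so the sum is dominated by $\sum_j(j+1)D^{-(j+1)(1/\alpha-1/q)}<\infty$ because $1/\alpha-1/q>0$. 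Taking the supremum over $\ell>0$ finishes the proof. The main obstacle I anticipate is the correct bookkeeping in the annular estimate for $K_2$: one must carefully choose the Young functions in the O'Neil/generalized H\"older step so that the $L(\log L)^{p'}$-type bump on $v^{-1/p}$ in \eqref{assump1.2} simultaneously absorbs both the $v^{-1/p}$ factor \emph{and} the extra $\exp L$ factor coming from $|b-b_{Q_{j+1}}|$ — this is precisely the role of the logarithmic bump, and verifying the inverse-function inequality $\mathcal A^{-1}\cdot\mathcal B^{-1}\leq\mathcal C^{-1}$ for the relevant triple is the delicate point, together with checking that the reverse-doubling series still converges after the insertion of the polynomial factor $(j+1)$.
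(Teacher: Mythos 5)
Your proposal follows essentially the same route as the paper's proof: the same $f_1+f_2$ splitting, Theorem \ref{WT2} plus the doubling estimate \eqref{doubling3} for the local part, the three-way BMO decomposition over annuli with the O'Neil/generalized H\"older step pairing $\exp L$ against the $t^{p'}(1+\log^+t)^{p'}$ bump together with the H\"older-with-exponent-$r$ trick \eqref{U}, and finally Minkowski's inequality and a convergent weighted series. The only cosmetic differences are that the paper sums the series using the $A_\infty$ comparison inequality \eqref{compare} rather than reverse doubling (both work), and controls the $|b(x)-b_{Q}|$ factor through the $L^p_u$ average via Lemma \ref{BMO}$(ii)$ (valid for cubes and $A_\infty$ weights) rather than the $L^1_u$ average your sketch writes, neither of which changes the argument.
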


\begin{proof}[Proof of Theorem $\ref{mainthm:6}$]
Let $1<p\leq\alpha<q\leq\infty$ and $f\in(L^p,L^q)^{\alpha}(v,u;\mu)$ with $u\in A_\infty$ and $\mu\in\Delta_2$. For an arbitrary cube $Q=Q(y,\ell)$ in $\mathbb R^n$, as before, we set
\begin{equation*}
f=f_1+f_2,\qquad f_1=f\cdot\chi_{2Q},\quad  f_2=f\cdot\chi_{(2Q)^c}.
\end{equation*}
Then for given $y\in\mathbb R^n$ and $\ell>0$, we write
\begin{align}\label{Kprime}
&u(Q(y,\ell))^{1/{\alpha}-1/p-1/q}\big\|[b,T_{\theta}](f)\cdot\chi_{Q(y,\ell)}\big\|_{WL^p_u}\notag\\
&\leq 2\cdot u(Q(y,\ell))^{1/{\alpha}-1/p-1/q}\big\|[b,T_{\theta}](f_1)\cdot\chi_{Q(y,\ell)}\big\|_{WL^p_u}\notag\\
&+2\cdot u(Q(y,\ell))^{1/{\alpha}-1/p-1/q}\big\|[b,T_{\theta}](f_2)\cdot\chi_{Q(y,\ell)}\big\|_{WL^p_u}\notag\\
&:=K'_1(y,\ell)+K'_2(y,\ell).
\end{align}
Since $u\in A_\infty$, we know that $u\in\Delta_2$. From Theorem \ref{WT2} and inequality (\ref{doubling3}), it follows that
\begin{align}\label{K1prime}
K'_1(y,\ell)&\leq 2\cdot u(Q(y,\ell))^{1/{\alpha}-1/p-1/q}\big\|[b,T_{\theta}](f_1)\big\|_{WL^p_u}\notag\\
&\leq C\cdot u(Q(y,\ell))^{1/{\alpha}-1/p-1/q}
\bigg(\int_{Q(y,2\ell)}|f(x)|^pv(x)\,dx\bigg)^{1/p}\notag\\
&=C\cdot u(Q(y,2\ell))^{1/{\alpha}-1/p-1/q}\big\|f\cdot\chi_{Q(y,2\ell)}\big\|_{L^p_v}\notag\\
&\times \frac{u(Q(y,\ell))^{1/{\alpha}-1/p-1/q}}{u(Q(y,2\ell))^{1/{\alpha}-1/p-1/q}}\notag\\
&\leq C\cdot u(Q(y,2\ell))^{1/{\alpha}-1/p-1/q}\big\|f\cdot\chi_{Q(y,2\ell)}\big\|_{L^p_v}.
\end{align}
Next we estimate $K'_2(y,\ell)$. For any $x\in Q(y,\ell)$, from the definition of $[b,T_{\theta}]$, we can see that
\begin{equation*}
\begin{split}
\big|[b,T_{\theta}](f_2)(x)\big|
&\leq \big|b(x)-b_{Q(y,\ell)}\big|\cdot\big|T_\theta(f_2)(x)\big|
+\Big|T_\theta\big([b_{Q(y,\ell)}-b]f_2\big)(x)\Big|\\
&:=\xi(x)+\eta(x).
\end{split}
\end{equation*}
Thus we have
\begin{equation*}
\begin{split}
K'_2(y,\ell)\leq&4\cdot u(Q(y,\ell))^{1/{\alpha}-1/p-1/q}\big\|\xi(\cdot)\cdot\chi_{Q(y,\ell)}\big\|_{WL^p_u}\\
&+4\cdot u(Q(y,\ell))^{1/{\alpha}-1/p-1/q}\big\|\eta(\cdot)\cdot\chi_{Q(y,\ell)}\big\|_{WL^p_u}\\
:=&K'_3(y,\ell)+K'_4(y,\ell).
\end{split}
\end{equation*}
For the term $K'_3(y,\ell)$, it follows directly from Chebyshev's inequality and estimate \eqref{pointwise3} that
\begin{equation*}
\begin{split}
K'_3(y,\ell)&\leq4\cdot u(Q(y,\ell))^{1/{\alpha}-1/p-1/q}\left(\int_{Q(y,\ell)}\big|\xi(x)\big|^pu(x)\,dx\right)^{1/p}\\
&\leq C\cdot u(Q(y,\ell))^{1/{\alpha}-1/p-1/q}\left(\int_{Q(y,\ell)}\big|b(x)-b_{Q(y,\ell)}\big|^pu(x)\,dx\right)^{1/p}\\
&\times\sum_{j=1}^\infty\frac{1}{|Q(y,2^{j+1}\ell)|}\int_{Q(y,2^{j+1}\ell)}|f(z)|\,dz\\
&\leq C\cdot u(Q(y,\ell))^{1/{\alpha}-1/q}
\sum_{j=1}^\infty\frac{1}{|Q(y,2^{j+1}\ell)|}\int_{Q(y,2^{j+1}\ell)}|f(z)|\,dz,
\end{split}
\end{equation*}
where in the last inequality we have used the fact that Lemma \ref{BMO}$(ii)$ still holds with ball $B$ replaced by cube $Q$, when $u$ is an $A_{\infty}$ weight. Repeating the arguments in the proof of Theorem \ref{mainthm:5}, we can also show that
\begin{equation*}
\begin{split}
K'_3(y,\ell)&\leq C\sum_{j=1}^\infty u(Q(y,2^{j+1}\ell))^{1/{\alpha}-1/p-1/q}\big\|f\cdot\chi_{Q(y,2^{j+1}\ell)}\big\|_{L^p_v}
\cdot\frac{u(Q(y,\ell))^{1/{\alpha}-1/q}}{u(Q(y,2^{j+1}\ell))^{1/{\alpha}-1/q}}.
\end{split}
\end{equation*}
As for the term $K'_4(y,\ell)$,
using the same methods and steps as we deal with $J_2(y,r)$ in Theorem \ref{mainthm:3}, we can show the following pointwise estimate as well.
\begin{equation*}
\begin{split}
\eta(x)&=\Big|T_\theta\big([b_{Q(y,\ell)}-b]f_2\big)(x)\Big|\\
&\leq C\sum_{j=1}^\infty\frac{1}{|Q(y,2^{j+1}\ell)|}\int_{Q(y,2^{j+1}\ell)}\big|b(z)-b_{Q(y,\ell)}\big|\cdot|f(z)|\,dz.
\end{split}
\end{equation*}
This, together with Chebyshev's inequality yields
\begin{equation*}
\begin{split}
K'_4(y,\ell)&\leq4\cdot u(Q(y,\ell))^{1/{\alpha}-1/p-1/q}\left(\int_{Q(y,\ell)}\big|\eta(x)\big|^pu(x)\,dx\right)^{1/p}\\
&\leq C\cdot u(Q(y,\ell))^{1/{\alpha}-1/q}
\sum_{j=1}^\infty\frac{1}{|Q(y,2^{j+1}\ell)|}\int_{Q(y,2^{j+1}\ell)}\big|b(z)-b_{Q(y,\ell)}\big|\cdot|f(z)|\,dz\\
&\leq C\cdot u(Q(y,\ell))^{1/{\alpha}-1/q}
\sum_{j=1}^\infty\frac{1}{|Q(y,2^{j+1}\ell)|}\int_{Q(y,2^{j+1}\ell)}\big|b(z)-b_{Q(y,2^{j+1}\ell)}\big|\cdot|f(z)|\,dz\\
&+C\cdot u(Q(y,\ell))^{1/{\alpha}-1/q}
\sum_{j=1}^\infty\frac{1}{|Q(y,2^{j+1}\ell)|}\int_{Q(y,2^{j+1}\ell)}\big|b_{Q(y,2^{j+1}\ell)}-b_{Q(y,\ell)}\big|\cdot|f(z)|\,dz\\
&:=K'_5(y,\ell)+K'_6(y,\ell).
\end{split}
\end{equation*}
An application of H\"older's inequality leads to that
\begin{equation*}
\begin{split}
K'_5(y,\ell)&\leq C\cdot u(Q(y,\ell))^{1/{\alpha}-1/q}
\sum_{j=1}^\infty\frac{1}{|Q(y,2^{j+1}\ell)|}\left(\int_{Q(y,2^{j+1}\ell)}|f(z)|^pv(z)\,dz\right)^{1/p}\\
&\times\left(\int_{Q(y,2^{j+1}\ell)}\big|b(z)-b_{Q(y,2^{j+1}\ell)}\big|^{p'}v(z)^{-p'/p}\,dz\right)^{1/{p'}}\\
&\leq C\cdot u(Q(y,\ell))^{1/{\alpha}-1/q}\sum_{j=1}^\infty\frac{\big\|f\cdot\chi_{Q(y,2^{j+1}\ell)}\big\|_{L^p_v}}{|Q(y,2^{j+1}\ell)|}\\
&\times\big|Q(y,2^{j+1}\ell)\big|^{1/{p'}}\Big\|(b-b_{Q(y,2^{j+1}\ell)})\cdot v^{-1/p}\Big\|_{\mathcal C,Q(y,2^{j+1}\ell)},
\end{split}
\end{equation*}
where $\mathcal C(t)=t^{p'}$ is a Young function. For $1<p<\infty$, we know the inverse function of $\mathcal C(t)$ is $\mathcal C^{-1}(t)=t^{1/{p'}}$. Observe that
\begin{equation*}
\begin{split}
\mathcal C^{-1}(t)&=t^{1/{p'}}\\
&=\frac{t^{1/{p'}}}{1+\log^+ t}\times\big(1+\log^+t\big)\\
&=\mathcal A^{-1}(t)\cdot\mathcal B^{-1}(t),
\end{split}
\end{equation*}
where
\begin{equation*}
\mathcal A(t)\approx t^{p'}(1+\log^+t)^{p'},\qquad \mbox{and}\qquad \mathcal B(t)\approx \exp(t)-1.
\end{equation*}
Thus, by Lemma \ref{three} and the estimate \eqref{Jensen}(consider cube $Q$ instead of ball $B$ when $w\equiv1$), we have
\begin{equation*}
\begin{split}
\Big\|(b-b_{Q(y,2^{j+1}\ell)})\cdot v^{-1/p}\Big\|_{\mathcal C,Q(y,2^{j+1}\ell)}
&\leq C\Big\|b-b_{Q(y,2^{j+1}\ell)}\Big\|_{\mathcal B,Q(y,2^{j+1}\ell)}\cdot\Big\|v^{-1/p}\Big\|_{\mathcal A,Q(y,2^{j+1}\ell)}\\
&\leq C\|b\|_*\cdot\Big\|v^{-1/p}\Big\|_{\mathcal A,Q(y,2^{j+1}\ell)}.
\end{split}
\end{equation*}
Moreover, in view of \eqref{U}, we can deduce that
\begin{equation*}
\begin{split}
K'_5(y,\ell)&\leq C\|b\|_*\cdot u(Q(y,\ell))^{1/{\alpha}-1/q}
\sum_{j=1}^\infty\frac{\big\|f\cdot\chi_{Q(y,2^{j+1}\ell)}\big\|_{L^p_v}}{|Q(y,2^{j+1}\ell)|^{1/p}}
\cdot\Big\|v^{-1/p}\Big\|_{\mathcal A,Q(y,2^{j+1}\ell)}\\
&=C\|b\|_*\sum_{j=1}^\infty u(Q(y,2^{j+1}\ell))^{1/{\alpha}-1/p-1/q}\big\|f\cdot\chi_{Q(y,2^{j+1}\ell)}\big\|_{L^p_v}
\cdot\frac{u(Q(y,\ell))^{1/{\alpha}-1/q}}{u(Q(y,2^{j+1}\ell))^{1/{\alpha}-1/q}}\\
&\times\frac{u(Q(y,2^{j+1}\ell))^{1/p}}{|Q(y,2^{j+1}\ell)|^{1/p}}
\cdot\Big\|v^{-1/p}\Big\|_{\mathcal A,Q(y,2^{j+1}\ell)}\\
&\leq C\|b\|_*\sum_{j=1}^\infty u(Q(y,2^{j+1}\ell))^{1/{\alpha}-1/p-1/q}\big\|f\cdot\chi_{Q(y,2^{j+1}\ell)}\big\|_{L^p_v}
\cdot\frac{u(Q(y,\ell))^{1/{\alpha}-1/q}}{u(Q(y,2^{j+1}\ell))^{1/{\alpha}-1/q}}\\
&\times\frac{|Q(y,2^{j+1}\ell)|^{1/{(r'p)}}}{|Q(y,2^{j+1}\ell)|^{1/p}}
\left(\int_{Q(y,2^{j+1}\ell)}u(z)^r\,dz\right)^{1/{(rp)}}\cdot\Big\|v^{-1/p}\Big\|_{\mathcal A,Q(y,2^{j+1}\ell)}\\
\end{split}
\end{equation*}
\begin{equation*}
\begin{split}
&\leq C\|b\|_*\sum_{j=1}^\infty u(Q(y,2^{j+1}\ell))^{1/{\alpha}-1/p-1/q}\big\|f\cdot\chi_{Q(y,2^{j+1}\ell)}\big\|_{L^p_v}
\cdot\frac{u(Q(y,\ell))^{1/{\alpha}-1/q}}{u(Q(y,2^{j+1}\ell))^{1/{\alpha}-1/q}}.
\end{split}
\end{equation*}
The last inequality is obtained by the $A_p$-type condition \eqref{assump1.2} on $(u,v)$. It remains to estimate the last term $K'_6(y,\ell)$. Applying Lemma \ref{BMO}$(i)$(use $Q$ instead of $B$) and H\"older's inequality, we get
\begin{equation*}
\begin{split}
K'_6(y,\ell)&\leq C\cdot u(Q(y,\ell))^{1/{\alpha}-1/q}
\sum_{j=1}^\infty\frac{(j+1)\|b\|_*}{|Q(y,2^{j+1}\ell)|}\int_{Q(y,2^{j+1}\ell)}|f(z)|\,dz\\
&\leq C\cdot u(Q(y,\ell))^{1/{\alpha}-1/q}
\sum_{j=1}^\infty\frac{(j+1)\|b\|_*}{|Q(y,2^{j+1}\ell)|}\left(\int_{Q(y,2^{j+1}\ell)}|f(z)|^pv(z)\,dz\right)^{1/p}\\
&\times\left(\int_{Q(y,2^{j+1}\ell)}v(z)^{-p'/p}\,dz\right)^{1/{p'}}\\
&=C\|b\|_*\sum_{j=1}^\infty u(Q(y,2^{j+1}\ell))^{1/{\alpha}-1/p-1/q}\big\|f\cdot\chi_{Q(y,2^{j+1}\ell)}\big\|_{L^p_v}\\
&\times\big(j+1\big)\cdot\frac{u(Q(y,\ell))^{1/{\alpha}-1/q}}{u(Q(y,2^{j+1}\ell))^{1/{\alpha}-1/q}}
\cdot\frac{u(Q(y,2^{j+1}\ell))^{1/p}}{|Q(y,2^{j+1}\ell)|}\left(\int_{Q(y,2^{j+1}\ell)}v(z)^{-p'/p}\,dz\right)^{1/{p'}}.
\end{split}
\end{equation*}
Let $\mathcal C(t)$, $\mathcal A(t)$ be the same as before. Obviously, $\mathcal C(t)\leq\mathcal A(t)$ for all $t>0$, then for any cube $Q\subset\mathbb R^n$, we have $\big\|f\big\|_{\mathcal C,Q}\leq\big\|f\big\|_{\mathcal A,Q}$ by definition, which implies that condition \eqref{assump1.2} is stronger that condition \eqref{assump1.1}. This fact together with \eqref{U} yields
\begin{equation*}
\begin{split}
K'_6(y,\ell)&\leq C\|b\|_*\sum_{j=1}^\infty u(Q(y,2^{j+1}\ell))^{1/{\alpha}-1/p-1/q}\big\|f\cdot\chi_{Q(y,2^{j+1}\ell)}\big\|_{L^p_v}\\
&\times\big(j+1\big)\cdot\frac{u(Q(y,\ell))^{1/{\alpha}-1/q}}{u(Q(y,2^{j+1}\ell))^{1/{\alpha}-1/q}}\\
&\times\frac{|Q(y,2^{j+1}\ell)|^{1/{(r'p)}}}{|Q(y,2^{j+1}\ell)|}
\left(\int_{Q(y,2^{j+1}\ell)}u(z)^r\,dz\right)^{1/{(rp)}}
\left(\int_{Q(y,2^{j+1}\ell)}v(z)^{-p'/p}\,dz\right)^{1/{p'}}\\
&\leq C\|b\|_*\sum_{j=1}^\infty u(Q(y,2^{j+1}\ell))^{1/{\alpha}-1/p-1/q}\big\|f\cdot\chi_{Q(y,2^{j+1}\ell)}\big\|_{L^p_v}\\
&\times\big(j+1\big)\cdot\frac{u(Q(y,\ell))^{1/{\alpha}-1/q}}{u(Q(y,2^{j+1}\ell))^{1/{\alpha}-1/q}}.
\end{split}
\end{equation*}
Summing up all the above estimates, we get
\begin{equation}\label{K2prime}
\begin{split}
K'_2(y,\ell)&\leq C\sum_{j=1}^\infty u(Q(y,2^{j+1}\ell))^{1/{\alpha}-1/p-1/q}\big\|f\cdot\chi_{Q(y,2^{j+1}\ell)}\big\|_{L^p_v}\\
&\times\big(j+1\big)\cdot\frac{u(Q(y,\ell))^{1/{\alpha}-1/q}}{u(Q(y,2^{j+1}\ell))^{1/{\alpha}-1/q}}.
\end{split}
\end{equation}
Moreover, by our additional hypothesis on $u:u\in A_\infty$ and inequality \eqref{compare} with exponent $\delta^\ast>0$(use $Q$ instead of $B$), we find that
\begin{align}\label{6}
\sum_{j=1}^\infty\big(j+1\big)\cdot\frac{u(Q(y,\ell))^{1/{\alpha}-1/q}}{u(Q(y,2^{j+1}\ell))^{1/{\alpha}-1/q}}
&\leq C\sum_{j=1}^\infty(j+1)\cdot\left(\frac{|Q(y,\ell)|}{|Q(y,2^{j+1}\ell)|}\right)^{\delta^\ast(1/{\alpha}-1/q)}\notag\\
&=C\sum_{j=1}^\infty(j+1)\cdot\left(\frac{1}{2^{(j+1)n}}\right)^{\delta^\ast(1/{\alpha}-1/q)}\notag\\
&\leq C.
\end{align}
Notice that the exponent $\delta^\ast(1/{\alpha}-1/q)$ is positive because $\alpha<q$, which guarantees that the last series is convergent.
Thus by taking the $L^q_{\mu}$-norm of both sides of \eqref{Kprime}(with respect to the variable $y$), and then using Minkowski's inequality, \eqref{K1prime}, \eqref{K2prime} and \eqref{6}, we finally obtain
\begin{equation*}
\begin{split}
&\Big\|u(Q(y,\ell))^{1/{\alpha}-1/p-1/q}\big\|[b,T_{\theta}](f)\cdot\chi_{Q(y,\ell)}\big\|_{WL^p_u}\Big\|_{L^q_{\mu}}\\
&\leq\big\|K'_1(y,\ell)\big\|_{L^q_{\mu}}+\big\|K'_2(y,\ell)\big\|_{L^q_{\mu}}\\
&\leq C\Big\|u(Q(y,2\ell))^{1/{\alpha}-1/p-1/q}\big\|f\cdot\chi_{Q(y,2\ell)}\big\|_{L^p_v}\Big\|_{L^q_{\mu}}\\
&+C\sum_{j=1}^\infty\Big\| u(Q(y,2^{j+1}\ell))^{1/{\alpha}-1/p-1/q}\big\|f\cdot\chi_{Q(y,2^{j+1}\ell)}\big\|_{L^p_v}\Big\|_{L^q_{\mu}}\\
&\times\big(j+1\big)\cdot\frac{u(Q(y,\ell))^{1/{\alpha}-1/q}}{u(Q(y,2^{j+1}\ell))^{1/{\alpha}-1/q}}\\
&\leq C\big\|f\big\|_{(L^p,L^q)^{\alpha}(v,u;\mu)}+C\big\|f\big\|_{(L^p,L^q)^{\alpha}(v,u;\mu)}
\times\sum_{j=1}^\infty\big(j+1\big)\cdot\frac{u(Q(y,\ell))^{1/{\alpha}-1/q}}{u(Q(y,2^{j+1}\ell))^{1/{\alpha}-1/q}}\\
&\leq C\big\|f\big\|_{(L^p,L^q)^{\alpha}(v,u;\mu)}.
\end{split}
\end{equation*}
We therefore conclude the proof of Theorem \ref{mainthm:6} by taking the supremum over all $\ell>0$.
\end{proof}

\end{document}